\documentclass[12pt,reqno]{amsart}

\usepackage{amsmath,amssymb,ifthen}
\usepackage{hyperref}
\usepackage{fullpage}
\usepackage{graphicx,psfrag,subfigure}
\usepackage{color}
\usepackage{moreverb}
\usepackage{amsthm}
\usepackage{mathrsfs}
\usepackage{hhline}
\usepackage{bbm}
\usepackage[english]{babel}
\usepackage{tikz}
\usepackage[utf8]{inputenc}
\usepackage{amsfonts}
\usepackage{enumerate}



\def\Nu{\boldsymbol{\rm N}}

\def\MM{\mathcal M}

\def\UU{\mathcal{U}}
\def\D{D}
\def\T{\mathbb{T}}
\def\R{{\mathbb R}}
\def\C{{\mathbb C}}

\def\N{{\mathbb N}}
\def\BB{{\mathcal B}}

\def\KK{{\mathcal K}}
\def\NN{{\mathcal N}}

\def\OO{{\mathcal O}}

\def\SS{{\mathcal S}}
\def\TT{{\mathcal T}}
\def\XX{{\mathcal X}}

\def\diam{{\rm diam}}
\def\q#1{q_{\rm #1}}

\def\ro#1{\rho_{\rm #1}}

\def\const#1{C_{\rm #1}}

\def\norm#1#2{\|#1\|_{#2}}
\def\seminorm#1#2{\vert #1\vert_{#2}}
\def\set#1#2{\big\{#1\,:\,#2\big\}}
\def\sprod#1#2{(#1\,;\,#2)}
\def\dual#1#2{\langle#1\,;\,#2\rangle}

\def\level{{\rm level}}

\def\Trunc{{\rm Trunc}}
\def\refine{{\tt refine}}
\def\supp{{\rm supp}}

\def\dist{{\rm dist}}

\def\uni#1{{\rm uni}(#1)}
\def\coarse{\bullet}
\def\fine{\circ}

\def\revision#1{{\color{black}{#1}}}

\numberwithin{equation}{section}
\numberwithin{figure}{section}
\newtheorem{theorem}{Theorem}[section]
\newtheorem{proposition}[theorem]{Proposition}
\newtheorem{lemma}[theorem]{Lemma}

\newtheorem{algorithm}[theorem]{Algorithm}

\newtheorem{remark}[theorem]{Remark}

\def\subsection#1
{
 \bigskip

 \refstepcounter{subsection}
 {\noindent\bf\arabic{section}.\arabic{subsection}.~#1.~}
}
\renewcommand{\subsection}[1]{\refstepcounter{subsection}\medskip{\bf\thesubsection.~#1.}}

\usepackage{fancyhdr}
\lfoot{\small\today}
\cfoot{\small\thepage}
\lhead{}
\rhead{}

\advance\footskip0.4cm
\textheight=54pc    
\advance\textheight-0.4cm
\calclayout
\pagestyle{fancy}

\newcommand*\patchAmsMathEnvironmentForLineno[1]{%
  \expandafter\let\csname old#1\expandafter\endcsname\csname #1\endcsname
  \expandafter\let\csname oldend#1\expandafter\endcsname\csname end#1\endcsname
  \renewenvironment{#1}%
     {\linenomath\csname old#1\endcsname}%
     {\csname oldend#1\endcsname\endlinenomath}}%
\newcommand*\patchBothAmsMathEnvironmentsForLineno[1]{%
  \patchAmsMathEnvironmentForLineno{#1}%
  \patchAmsMathEnvironmentForLineno{#1*}}%
\AtBeginDocument{%
\patchBothAmsMathEnvironmentsForLineno{equation}%
\patchBothAmsMathEnvironmentsForLineno{align}%
\patchBothAmsMathEnvironmentsForLineno{flalign}%
\patchBothAmsMathEnvironmentsForLineno{alignat}%
\patchBothAmsMathEnvironmentsForLineno{gather}%
\patchBothAmsMathEnvironmentsForLineno{multline}%
}
\usepackage[mathlines]{lineno}


\begin{document}

\title{Adaptive BEM for elliptic PDE systems, \\ part II: Isogeometric analysis\\with hierarchical B-splines \\ for weakly-singular integral equations}

\author{Gregor Gantner}
\email{Gregor.Gantner@asc.tuwien.ac.at}


\author{Dirk Praetorius}
\email{Dirk.Praetorius@asc.tuwien.ac.at}

\address{TU Wien,
 Institute of Analysis and Scientific Computing,
 Wiedner Hauptstra\ss{}e 8-10,
 A-1040 Wien, Austria}

\keywords{boundary element method; isogeometric analysis; hierarchical splines; adaptivity; optimal convergence}
\subjclass[2010]{65D07, 65N12, 65N15, 65N38, 65N50}

\begin{abstract}
We formulate and analyze an adaptive algorithm for isogeometric analysis with hierarchical B-splines for weakly-singular boundary integral equations. 
We prove that the employed weighted-residual error estimator is reliable and converges at optimal algebraic rate. 
Numerical experiments with isogeometric boundary elements for the 3D Poisson problem confirm the theoretical results, which also cover general elliptic systems like linear elasticity.

\end{abstract}

\date{\today}
\maketitle


\section{Introduction}

\subsection{State of the art}
The central idea of isogeometric analysis (IGA) is to use the same ansatz functions for the discretization of the partial differential equation (PDE) at hand, as are used for the representation of the problem geometry. 
Usually, the problem geometry $\Omega$ is represented in computer aided design (CAD) by means of non-uniform rational B-splines (NURBS), T-splines, or hierarchical splines. 
This concept, originally invented in~\cite{hcb05} for finite element methods (IGAFEM) has proved very fruitful in applications; see also the monograph \cite{bible}. 

Usually, CAD programs only provide  a parametrization of the boundary $\partial\Omega$ instead of the domain $\Omega$ itself.
In particular, for isogeometric FEM, the parametrization needs to be extended to the whole domain $\Omega$, which is non-trivial and still an active research topic.  
The \textit{boundary element method} (BEM) circumvents this difficulty by working only on  the CAD provided boundary  $\partial \Omega$. 
However, compared to the IGAFEM literature, only little is found for isogeometric BEM (IGABEM). 
Based on a collocation approach, the latter was first  considered   in \cite{igabem2d} for 2D and   in \cite{igabem3d} for 3D. 
We refer to the monograph~\cite{bmd20} for an introduction on IGABEM and to \cite{dhksw18,dhkmsw20} for an efficient IGABEM implementation based on the fast multipole method. 
However, to the best of our knowledge, {\sl {\sl a~posteriori}} error estimation and adaptive mesh-refinement for IGABEM, have only been considered for simple 2D model problems in the own works~\cite{fgp,resigabem,resigaconv,fgps19,gps20}, which employ B-splines on the one-dimensional boundary, and the recent work~\cite{fgkss19}, which employs hierarchical B-splines. 
In particular, \cite{fgp}  also appears to be the first work that considers Galerkin IGABEM. 

For standard BEM with (dis)continuous piecewise polynomials, {\sl {\sl a~posteriori}} error estimation and adaptive mesh-refinement are well understood. 
We refer to \cite{arcme} for  an overview on available {\sl a~posteriori} error estimation strategies. Moreover, optimal convergence of mesh-refining adaptive algorithms has been proved for polyhedral boundaries~\cite{part1,part2,fkmp} as well as smooth boundaries~\cite{gantumur}. 
The  work~\cite{invest} allows to transfer these results to piecewise smooth boundaries; see also the discussion in the review article~\cite{axioms}.

In the frame of adaptive IGAFEM, a rigorous error and convergence analysis is first  found in \cite{bg}, which  proves linear convergence for some adaptive IGAFEM with hierarchical splines for the Poisson equation.
Optimal algebraic rates have been proved independently from each other in \cite{bg17,igafem} for IGAFEM with (truncated) hierarchical B-splines.
In particular, \cite{igafem} especially identified certain abstract properties on the mesh-refinement and the used ansatz spaces that automatically guarantee optimal convergence. 
In \cite{gp20b}, we have recently verified these properties for IGAFEM with T-splines as well. 

Unlike adaptive IGAFEM, the rigorous error and convergence analysis of adaptive IGABEM is completely open beyond the 2D results mentioned above and thus the focus of the present work. In Part~1 of this work~\cite{gp20}, we followed the idea of \cite{igafem} and identified an abstract framework, which guarantees optimal convergence of adaptive BEM for weakly-singular integral equations.
In the present work (Part~2), we now show that IGABEM with hierarchical splines is indeed covered by this abstract framework.

\subsection{\revision{Outline}}
The remainder of this manuscript is roughly organized as follows:
\revision{Section~\ref{sec:preliminaries}  recalls conforming BEM for weakly-singular integral equations; see~\eqref{eq:strong} for the precise model problem.} 
Section~\ref{sec:hierarchical bem} defines hierarchical meshes and hierarchical splines on the boundary  $\Gamma$ and introduces some local mesh-refinement rule  (Algorithm~\ref{alg:refinement bem}) which preserves admissibility
in the sense of a certain mesh-grading property.
The main result of Section~\ref{sec:hierarchical bem} is Theorem~\ref{thm:main bem}, \revision{which states reliability~\eqref{eq:reliable bem} of the weighted-residual error estimator~\eqref{eq:eta bem} as well as linear convergence~\eqref{eq:linear bem} with optimal algebraic rates~\eqref{eq:optimal bem}  of a standard adaptive algorithm (Algorithm~\ref{alg:bem algorithm}) applied to the model problem at hand.}
Remark~\ref{rem:rational main bem} extends the result to rational hierarchical splines.
The proof of Theorem~\ref{thm:main bem} \revision{(and Remark~\ref{rem:rational main bem})} is given in \revision{Section~\ref{sec:abstract setting bem}, which}  recalls the abstract framework from~\cite{gp20} \revision{and subsequently verifies the corresponding properties in the  considered isogeometric setting}. 
Two numerical experiments in Section~\ref{sec:numerics for bem1} underpin the theoretical results, but also demonstrate the limitations  of hierarchical splines in the frame of adaptive BEM when the solution $\phi$ exhibits edge singularities.



\section{Preliminaries}\label{sec:preliminaries}
In this section, we fix some general notation, recall Sobolev spaces on the boundary, and precisely state the considered problem. 
Throughout the work, let $\Omega\subset\R^d$ be a bounded Lipschitz domain as in \cite[Definition~3.28]{mclean} with boundary $\Gamma:= \partial\Omega$.

\subsection{General notation}
Throughout and without any ambiguity, $|\cdot|$ denotes the absolute value of scalars, the Euclidean norm of vectors in $\R^n$, and the Hausdorff measure of any $n$- or lower-dimensional set in $\R^n$. 
Let $B_\varepsilon(x):=\set{y\in\R^n}{|x-y|<\varepsilon}$ denote the open ball around $x$ with radius $\varepsilon>0$. 
For $\emptyset \neq\omega_1,\omega_2\subseteq \R^n$, let $B_\varepsilon(\omega_1):=\bigcup_{x\in\omega_1} B_\varepsilon(x)$. 
Moreover, let $\diam(\omega_1):=\sup\set{|x-y|}{x,y\in\omega_1}$ and $\dist(\omega_1,\omega_2):=\inf\set{|x-y|}{x\in\omega_1, y\in\omega_2}$.
We write $A\lesssim B$ to abbreviate $A\le CB$ with some generic constant $C>0$, which is clear from the context.
Moreover $A\simeq B$ abbreviates $A\lesssim B\lesssim A$.
Throughout, mesh-related quantities have the same index, e.g., $\XX_\coarse$ is the ansatz space corresponding to the mesh $\TT_\coarse$. 
The analogous notation is used for meshes $\TT_\fine,\TT_\star,\TT_\ell$, etc.

\subsection{Sobolev spaces}\label{subsec:sobolev}
For $\sigma\in[0,1]$, we define the Hilbert spaces $H^{\pm\sigma}(\Gamma)$ as in \cite[page~99]{mclean} by use of Bessel potentials on $\R^{d-1}$ and liftings via bi-Lipschitz mappings\footnote{For $\widehat\omega\subseteq \R^{d-1}$ and $\omega\subseteq \R^d$, a mapping $\gamma:\widehat\omega\to\omega$ is bi-Lipschitz if it is bijective and $\gamma$ as well as its inverse $\gamma^{-1}$ are Lipschitz continuous.} that describe $\Gamma$.
For $\sigma=0$, it holds that $H^0(\Gamma)=L^2(\Gamma)$ with equivalent norms.
We set $\norm{\cdot}{H^0(\Gamma)}:=\norm{\cdot}{L^2(\Gamma)}$.

For $\sigma\in(0,1]$,  any measurable subset $\omega\subseteq\Gamma$, and all $v\in H^\sigma(\Gamma)$, we define the associated   Sobolev--Slobodeckij norm
\begin{align}\label{eq:SS-norm}
 \norm{v}{H^{\sigma}(\omega)}^2
 := \norm{v}{L^2(\omega)}^2
 + |v|_{H^{\sigma}(\omega)}^2\text{ with }
 |v|_{H^{\sigma}(\omega)}^2 :=\begin{cases} \int_\omega\int_\omega\frac{|v(x)-v(y)|^2}{|x-y|^{d-1+2\sigma}}\,dx dy&\text{ if }\sigma\in(0,1),\\ \norm{\nabla_\Gamma v}{L^2(\omega)}^2&\text{ if }\sigma=1.\end{cases}
\end{align}
It is well-known that $\norm{\cdot}{H^\sigma(\Gamma)}$ provides an equivalent norm on $H^\sigma(\Gamma)$; see, e.g., \cite[Lemma~2.19]{s} and \cite[Theorem~3.30 and page 99]{mclean} for $\sigma\in(0,1)$ and \cite[Theorem~2.28]{gme} for $\sigma=1$.
Here, $\nabla_\Gamma(\cdot)$ denotes the usual (weak) surface gradient which can be  defined for almost all $x\in\Gamma$ as follows:
Since $\Gamma$ is a Lipschitz boundary, there exist an open cover  $(O_j)_{j=1}^J$ in $\R^d$ of $\Gamma$ such that each $\omega_j:=O_j\cap \Gamma$ can be parametrized by  a bi-Lipschitz mapping
 $\gamma_{\omega_j}: \widehat\omega_j\to \omega_j$, where  $\widehat \omega_j\subset\R^{d-1}$  is an  open set.
By Rademacher's theorem, $\gamma_{\omega_j}$ is  almost everywhere differentiable. 
\revision{With $D(\cdot)$ denoting the weak Jacobian matrix,}
the corresponding Gram determinant $\det(D\gamma_{\omega_j}^\top D\gamma_{\omega_j})$ is almost everywhere positive. 
Moreover, by definition of the space $H^1(\Gamma)$, $v\in H^1(\Gamma)$ implies that $v\circ\gamma_{\omega_j}\in H^1(\widehat\omega_j)$.
With the weak derivative $\nabla (v\circ\gamma_{\omega_j})\in L^2(\widehat\omega_j)^d$, we can  hence define 
\begin{align}\label{eq:surface gradient}
(\nabla_\Gamma v )|_{\omega_j} :=\big(D\gamma_{\omega_j}(D\gamma_{\omega_j}^\top D\gamma_{\omega_j})^{-1} \nabla (v\circ\gamma_{\omega_j})\big)\circ\gamma_{\omega_j}^{-1}
\quad\text{for all }v\in H^1(\Gamma).
\end{align}
This definition does not depend on the particular choice of the open sets $(O_j)_{j=1}^J$
and the corresponding parametrizations $(\gamma_{\omega_j})_{j=1}^J$; see, e.g., \cite[Theorem~2.28]{gme}.
With \eqref{eq:surface gradient}, we immediately obtain the chain rule 
\begin{align}\label{eq:chain rule}
\nabla(v\circ\gamma_{\omega_j})=D\gamma_{\omega_j}^\top((\nabla_\Gamma v)\circ\gamma_{\omega_j}(\cdot))\quad\text{for all }v\in H^1(\Gamma).
\end{align}

For $\sigma\in(0,1]$, $H^{-\sigma}(\Gamma)$ is a realization of the dual space of $H^{\sigma}(\Gamma)$ according to \cite[Theorem~3.30 and page~99]{mclean}.
With the dual bracket $\dual{\cdot}{\cdot}$, we define an equivalent norm 
\begin{align}
\norm{\psi}{H^{-\sigma}(\Gamma)}:=\sup \set{\dual{v}{\psi}}{v\in H^\sigma(\Gamma)\wedge\norm{v}{H^\sigma(\Gamma)}=1} \quad\text{for all } \psi\in H^{-\sigma}(\Gamma).
\end{align}
Moreover, we abbreviate
\begin{align}
\sprod{v}{\psi}:=\dual{\overline v}{\psi}\quad \text{for all } v\in H^\sigma(\Gamma),\psi\in H^{-\sigma}(\Gamma).
\end{align}

\cite[page~76]{mclean} states that $H^{\sigma_1}(\Gamma)\subseteq H^{\sigma_2}(\Gamma)$ for $-1\le \sigma_1\le\sigma_2\le 1$,  where the inclusion is continuous and dense.
In particular, $H^{\sigma}(\Gamma)\subset L^2(\Gamma)\subset H^{-\sigma}(\Gamma)$ forms a Gelfand triple in the sense of \cite[Section~2.1.2.4]{ss} for all $\sigma\in(0,1]$, where $\psi\in L^2(\Gamma)$ is interpreted as function in $H^{-\sigma}(\Gamma)$ via 
\begin{align}
\dual{v}{\psi}:=\sprod{\overline v}{\psi}_{L^2(\Gamma)}=\int_\Gamma v\,\psi \,dx \quad\text{for all }v\in H^\sigma(\Gamma),\psi\in L^2(\Gamma).
\end{align}

So far, we have only dealt with scalar-valued functions. 
For $D\ge1$, $\sigma\in[0,1]$, $v=(v_1,\dots,v_D)\in H^{\sigma}(\Gamma)^D$, we define  $\norm{v}{H^{\pm\sigma}(\Gamma)}^2:=\sum_{j=1}^D \norm{v_j}{H^{\pm\sigma}(\Gamma)}^2$. 
If $\sigma>0$, and $\omega\subseteq\Gamma$ is an arbitrary measurable set, we define $\norm{v}{H^\sigma(\omega)}$ and $\seminorm{v}{H^\sigma(\omega)}$ similarly.
With the definition
\begin{align}
\nabla_\Gamma v:=
\begin{pmatrix}
\nabla_\Gamma v_1\\
\vdots\\
\nabla_\Gamma v_D
\end{pmatrix}
\in L^2(\Gamma)^{D^2}\quad\text{for all }v\in H^1(\Gamma)^D,
\end{align}
it holds that
$\seminorm{v}{H^1(\omega)}=\norm{\nabla_\Gamma v}{L^2(\omega)}$.
Note that $H^{-\sigma}(\Gamma)^D$ with $\sigma\in(0,1]$ can be identified with the dual space of $H^{\sigma}(\Gamma)^D$, where we set
\begin{align}
\dual{v}{\psi}:=\sum_{j=1}^D\dual{v_j}{\psi_j}\quad\text{for all }v\in H^\sigma(\Gamma)^D,\psi\in H^{-\sigma}(\Gamma)^D.
\end{align}
As before we abbreviate
\begin{align}
\sprod{v}{\psi}:=\dual{\overline v}{\psi}\quad \text{for all } v\in H^\sigma(\Gamma)^D,\psi\in H^{-\sigma}(\Gamma)^D.
\end{align}
and set
\begin{align}
\dual{v}{\psi}:=\sprod{\overline v}{\psi}_{L^2(\Gamma)}=
\sum_{j=1}^D\int_\Gamma v_j \psi_j\,dx\quad\text{for all }v\in H^\sigma(\Gamma)^D,\psi\in L^2(\Gamma)^D.
\end{align}

{The spaces $H^\sigma(\Gamma)$ can be also defined as trace spaces or via interpolation, where the resulting norms are always equivalent with constants which depend only on the dimension $d$ and the boundary $\Gamma$.}
More details and proofs are found, e.g., in the monographs  \cite{mclean,ss,s}.

\subsection{Model problem}\label{subsec:model problem bem}
We consider a general second-order linear  system of PDEs on the $d$-dimensional bounded Lipschitz domain $\Omega$ with partial differential operator  
\begin{align}\label{eq:PDE bem}
\begin{split}
\mathfrak{P}u:=
-\sum_{i=1}^d \sum_{i'=1}^d\partial_i (A_{ii'}\partial_{i'} u)+\sum_{i=1}^d b_i\partial_i u +c u,
\end{split}
\end{align}
where the coefficients $A_{ii'},b_i,c\in\C^{D\times D}$ are constant for some fixed dimension $D\ge 1$.
We suppose that $A_{ii'}^\top=\overline{A_{i'i}}$.
Moreover, we  assume that $\mathfrak{P}$ is coercive on $H_0^1(\Omega)^D$, i.e., the sesquilinear form 
\begin{align}\label{eq:PDE form}
\sprod{u}{v}_{\mathfrak{P}}:=\int_\Omega \sum_{i=1}^d \sum_{i'=1}^d(A_{ii'}\partial_{i'} u)\cdot \partial_i v+\sum_{i=1}^d (b_i\partial_i u)\cdot v +(c u)\cdot v\,dx
\end{align}
is  elliptic up to some compact perturbation.
This is equivalent to \textit{strong ellipticity}
of the matrices $A_{ii'}$ in the sense of \cite[page~119]{mclean}.

Let $G:\R^d\setminus\{0\}\to \C^{D\times D}$ be a corresponding (matrix-valued) fundamental solution in the sense of \cite[page~198]{mclean}, i.e., a distributional solution of $\mathfrak{P}G=\delta$, where $\delta$ denotes the Dirac delta function.
For $\psi\in L^\infty(\Gamma)^D$, we define the \textit{single-layer operator}
as
\begin{align}\label{eq:single layer operator integral}
({\mathfrak{V}}\psi)(x):=\int_{\Gamma} G(x-y) \psi(y) \,dy\quad\text{for all }x\in\Gamma.
\end{align}
According to \cite[page 209 and 219--220]{mclean} and \cite[
Corollary~3.38]{mitrea}, 
 this operator can be extended for arbitrary $\sigma\in(-1/2,1/2$] to a bounded linear operator 
\begin{align}\label{eq:single layer operator}
\mathfrak{V}:
H^{-1/2+\sigma}(\Gamma)^D\to H^{1/2+\sigma}(\Gamma)^D.
\end{align}
For $\sigma=0$, \cite[Theorem~7.6]{mclean} states that $\mathfrak{V}$ is always elliptic up to some compact perturbation.
We assume that it is elliptic even without perturbation, i.e., 
\begin{align}\label{eq:ellipticity bem}
\mathrm{Re}\,\sprod{\mathfrak{V}\psi}{\psi}
\ge \const{ell}\norm{\psi}{H^{-1/2}(\Gamma)}^2\quad\text{for all }\psi\in H^{-1/2}(\Gamma)^D.
\end{align}
\revision{For instance, this is} satisfied for the Laplace problem or for the Lam\'e problem, where the case $d=2$ requires an additional scaling of the geometry $\Omega$; see, e.g., \cite[Chapter~6]{s}.
\revision{We stress that $\mathfrak V$ is not necessarily self-adjoint; see~\cite{mclean}.}
The sesquilinear form $\sprod{\mathfrak{V}\,\cdot}{\cdot}
$ is continuous due to \eqref{eq:single layer operator}, i.e.,  it holds with $\const{cont}:=\norm{\mathfrak{V}}{H^{-1/2}(\Gamma)^D\to H^{1/2}(\Gamma)^D}$ that 
\begin{align}\label{eq:continuity bem}
|\sprod{\mathfrak{V}\psi}{\xi}|
\le \const{cont}\norm{\psi}{H^{-1/2}(\Gamma)}\norm{\xi}{H^{-1/2}(\Gamma)} \quad\text{for all }\psi,\xi\in H^{-1/2}(\Gamma)^D.
\end{align}

Given a right-hand side $f\in H^{1}(\Gamma)^D$, 
we consider the boundary integral equation
\begin{align}\label{eq:strong}
 \mathfrak{V}\phi = f.
\end{align}
Such equations arise from the solution of Dirichlet problems of the form $\mathfrak{P} u=0$ in $\Omega$ with $u=g$ on $\Gamma$ for some $g\in H^{1/2}(\Gamma)^D$; see, e.g., \cite[page 226--229]{mclean} for more details.
 The Lax--Milgram lemma provides existence
and uniqueness of the solution $\phi\in H^{-1/2}(\Gamma_{})^D$ of the equivalent variational formulation of~\eqref{eq:strong}
\begin{align}\label{eq:weak}
\sprod{\mathfrak{V}\phi}{\psi}
=\sprod{f}{\psi}
 \quad\text{for all }\psi\in H^{-1/2}(\Gamma_{})^D.
\end{align}
In particular, we see that $\mathfrak{V}:H^{-1/2}(\Gamma)^D\to H^{1/2}(\Gamma)^D$ is an isomorphism.
In the Galerkin boundary element method, the test
space $H^{-1/2}(\Gamma_{})^D$ is replaced by some discrete subspace  $\XX_\bullet\subset {L^{2}(\Gamma_{})}^D\subset H^{-1/2}(\Gamma_{})^D$.
Again, the Lax--Milgram lemma guarantees existence and uniqueness of the solution
$\Phi_\bullet\in\XX_\bullet$ of the discrete variational formulation
\begin{align}\label{eq:discrete}
\sprod{\mathfrak{V}\Phi_\bullet}{\Psi_\bullet}
 = \sprod{f}{\Psi_\bullet}
 \quad\text{for all }\Psi_\bullet\in\XX_\bullet,
\end{align}
and $\Phi_\bullet$ can in fact be computed by solving a linear system of equations.
\revision{We note the Galerkin orthogonality
\begin{align}\label{eq:galerkin bem}
 \sprod{\mathfrak{V}(\phi-\Phi_\coarse)}{\Psi_\coarse} = 0
 \quad\text{for all }\Psi_\coarse\in\XX_\coarse,
\end{align}
along with the resulting C\'ea-type quasi-optimality
\begin{align}\label{eq:cea bem}
 \norm{\phi-\Phi_\coarse}{H^{-1/2}(\Gamma)}
 \le C_{\text{C\'ea}}\min_{\Psi_\coarse\in\XX_\coarse}\norm{\phi-\Psi_\coarse}{H^{-1/2}(\Gamma)}\quad
 \text{with}\quad
 C_{\text{C\'ea}} := \textstyle\frac{\const{cont}}{\const{ell}}.
\end{align}}

Note that \eqref{eq:single layer operator} implies that $\mathfrak{V} \Psi_\coarse\in H^1(\Gamma)^D$ for arbitrary $\Psi_\coarse\in \XX_\coarse$.
The additional regularity $f\in H^1(\Gamma)^D$ instead of $f\in H^{1/2}(\Gamma)^D$ is only needed to define the residual error estimator~\eqref{eq:eta bem} below.
For a more detailed introduction to boundary integral equations, the reader is referred to the monographs \cite{mclean,ss,s}.


\section{Boundary element method with hierarchical splines}\label{sec:hierarchical bem}

In this section, we consider $\Omega\subset\R^d$ with $d\ge 2$.
We introduce hierarchical splines on the boundary  $\Gamma$ and propose a local mesh-refinement strategy.
To this end, we  assume the existence of a mesh $\set{\Gamma_m}{m = 1,\dots,M}$ of $\Gamma$, \revision{i.e., $\Gamma=\bigcup_{m=1}^M \Gamma_m$ with interfaces $\Gamma_m\cap\Gamma_{m'}$ having $(d-1)$-dimensional measure zero for $m\neq m'$}, such that each surface $\Gamma_m$ can be parametrized over   $\widehat\Gamma_m:=[0,1]^{d-1}$.
\revision{We formulate an adaptive algorithm (Algorithm~\ref{alg:bem algorithm}) for conforming BEM discretizations of our model problem~\eqref{eq:strong}, where adaptivity is driven by the \textit{residual {\sl a~posteriori} error estimator} (see \eqref{eq:eta bem} below).}
The main result of this section is Theorem~\ref{thm:main bem}, which states that \revision{the employed estimator is reliable and converges linearly at optimal rate.}
The proof of Theorem~\ref{thm:main bem} is given in \revision{Section~\ref{sec:abstract setting bem}}.

\subsection{Parametrization of the boundary}
\label{subsec:gamma bem}
We assume that for all $m\in\{1,\dots,M\}$, the surface $\Gamma_m\revision{\subseteq\Gamma}$ can be parametrized via a bi-Lipschitz mapping 
\begin{align}\gamma_m:{\widehat{\Gamma}_m}\to\Gamma_m, \end{align} where $\widehat\Gamma_m=[0,1]^{d-1}$.
In particular, 
$\gamma_m$ is almost everywhere differentiable, and there exists a constant $C_\gamma\ge1$ such that 
\begin{subequations}\label{eq:gram bound}
\begin{align}
C_\gamma^{-1}|s-t|\le|\gamma_m(s)-\gamma_m(t)|\le C_\gamma |s-t|\quad\text{for all }s,t\in\widehat\Gamma_m,
\end{align}
and the Gram determinant satisfies  that
\begin{align}
C_\gamma^{-(d-1)}\le\sqrt{\det(D\gamma_m^\top(t)D\gamma_m(t))}\le C_\gamma^{d-1}\quad\text{for almost all }t\in\widehat\Gamma_m;
\end{align}
\end{subequations}
see, e.g., \cite[Lemma~5.2.1]{diss}.
We define  the set of nodes 
\begin{align}\label{eq:Ngamma}
\NN_\gamma:=\bigcup_{m=1}^M\set{\gamma_m(\widehat z)}{\widehat z\in \{0,1\}^{d-1}}.
\end{align}
We suppose that there are no (initial) hanging nodes, i.e., the intersection $\Gamma_m\cap\Gamma_{m'}$ with $m\neq m'$ is either empty or a common (transformed) lower-dimensional hyperrectangle $\gamma_m(\widehat E_m)=\gamma_{m'}(\widehat E_{m'})$, where $\widehat E_m=[0,1]^{d-1}\cap (v+V)$ and $\widehat E_{m'} = [0,1]^{d-1}\cap(v'+V')$ with $v,v'\in\{0,1\}^{d-1}$ and $V,V'$ the linear spans of at most $d-2$ of the unit vectors $e_1,\dots e_{d-1}\in \R^{d-1}$. 
Moreover, with the node patch $\pi_\gamma(z):=\bigcup\set{\Gamma_m}{z\in\Gamma_m \wedge m = 1,\dots,M }$ for $z\in\NN_\gamma$,  we suppose the following compatibility assumption for the different parametrizations:
For all nodes $z\in\NN_\gamma$, there exists a polytope
 $\overline\pi_\gamma(z)\subset\R^{d-1}$, i.e., an interval for $d=2$, a polygon for $d=3$, and a polyhedron for $d=4$,  and a bi-Lipschitz mapping 
\begin{align}
\gamma_z:\overline\pi_\gamma(z)\to\pi_\gamma(z)
\end{align}
 such that 
$\gamma_z^{-1}\circ\gamma_m$  is an affine bijection for all $m\in\{1,\dots,M\}$ with $\Gamma_m\subseteq\pi_\gamma(z)$.
Put into words, this means that each node patch $\pi_\gamma(z)$ can be flattened, where the corresponding bi-Lipschitz mapping restricted to any contained surface $\Gamma_m$ essentially coincides with the parametrization $\gamma_m$.
In particular, this prohibits the case $\pi_\gamma(z)=\Gamma$. 
The same assumption is also made in \cite[Assumption~4.3.25]{ss} for curvilinear triangulations.
It particularly implies that the parametrizations essentially coincide at the boundary of the surfaces, i.e., for all $m\neq m'$ with non-empty intersection $E:=\Gamma_m\cap \Gamma_{m'}\neq \emptyset$, it holds that
\begin{align}\label{eq:gammacomp}
\gamma_m|_{\widehat E_m}=\gamma_{m'}\circ\big(\gamma_{m'}^{-1}\circ\gamma_z\circ\gamma_z^{-1}\circ\gamma_m|_{\widehat E_m}\big)
\quad \text{with}\quad \widehat E_m:=\gamma_m^{-1}(E) \text{ and } \widehat E_{m'}:=\gamma_{m'}^{-1}(E),
\end{align}
where $\gamma_{m'}^{-1}\circ\gamma_z\circ\gamma_z^{-1}\circ\gamma_m|_{\widehat E_m}:\widehat E_m\to \widehat E_{m'}$ is an affine bijection.
By possibly enlarging $\const{\gamma}$ from \eqref{eq:gram bound}, we can assume that 
\begin{align}
{C}_{\gamma}^{-1}|s-t|\le|\gamma_z(s)-\gamma_z(t)|\le {C}_{\gamma} |s-t|\quad\text{for all }s,t\in\overline\pi_\gamma(z).
\end{align}

\begin{figure}[t] 
\psfrag{x1}[c][c]{\fontsize{5pt}{6pt}\selectfont $x_1$}
\psfrag{x2}[c][c]{\fontsize{5pt}{6pt}\selectfont $x_2$}
\psfrag{x3}[c][c]{\fontsize{5pt}{6pt}\selectfont $x_3$}
\psfrag{y1}[c][c]{\fontsize{5pt}{6pt}\selectfont $\overline x_1$}
\psfrag{y2}[c][c]{\fontsize{5pt}{6pt}\selectfont $\overline x_2$}
\psfrag{z1}[c][c]{\fontsize{5pt}{6pt}\selectfont $\widehat x_1$}
\psfrag{z2}[c][c]{\fontsize{5pt}{6pt}\selectfont $\widehat x_2$}

\begin{center}
\includegraphics[width=0.3\textwidth, clip=true]{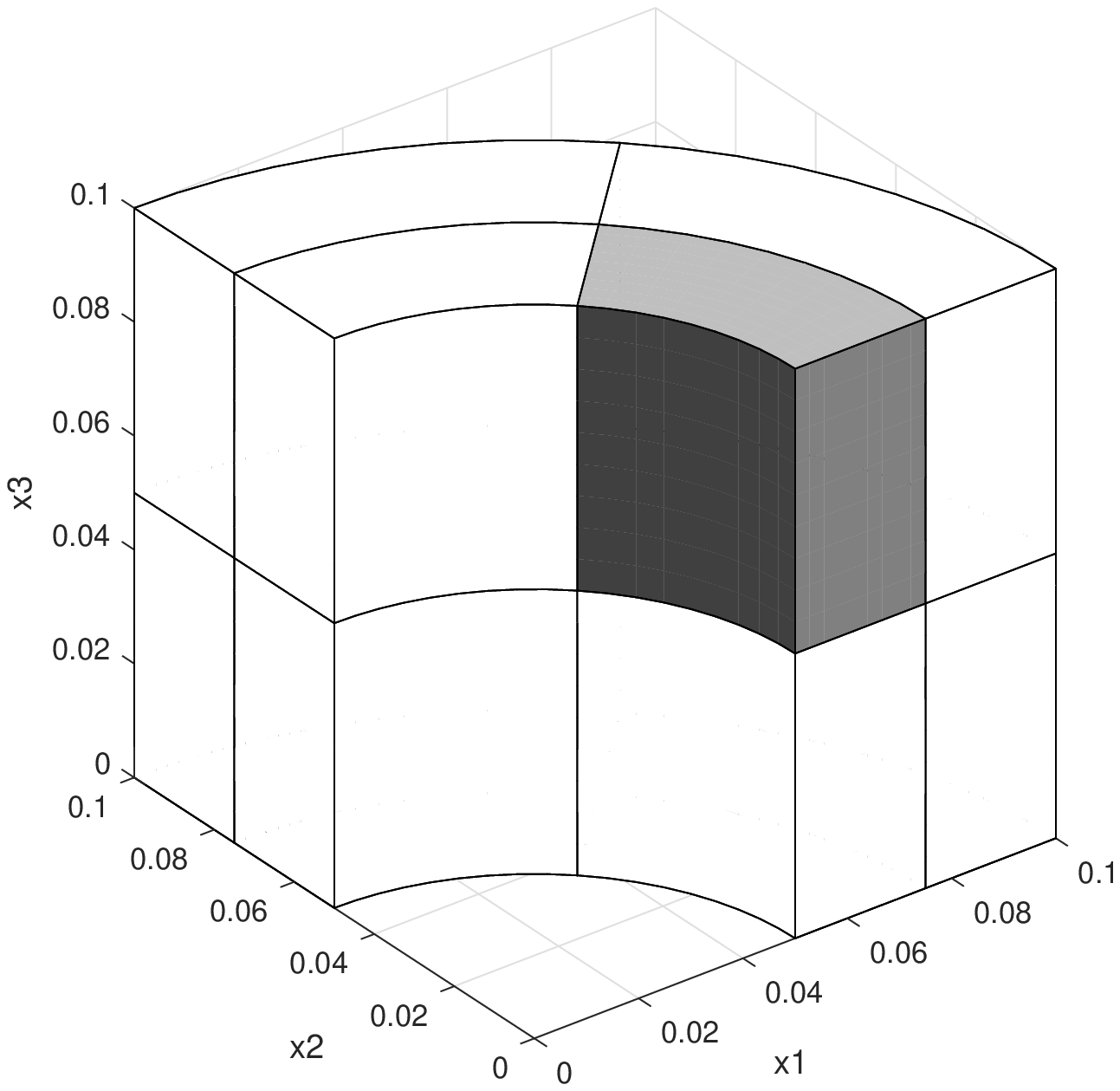}\quad
\includegraphics[width=0.3\textwidth, clip=true]{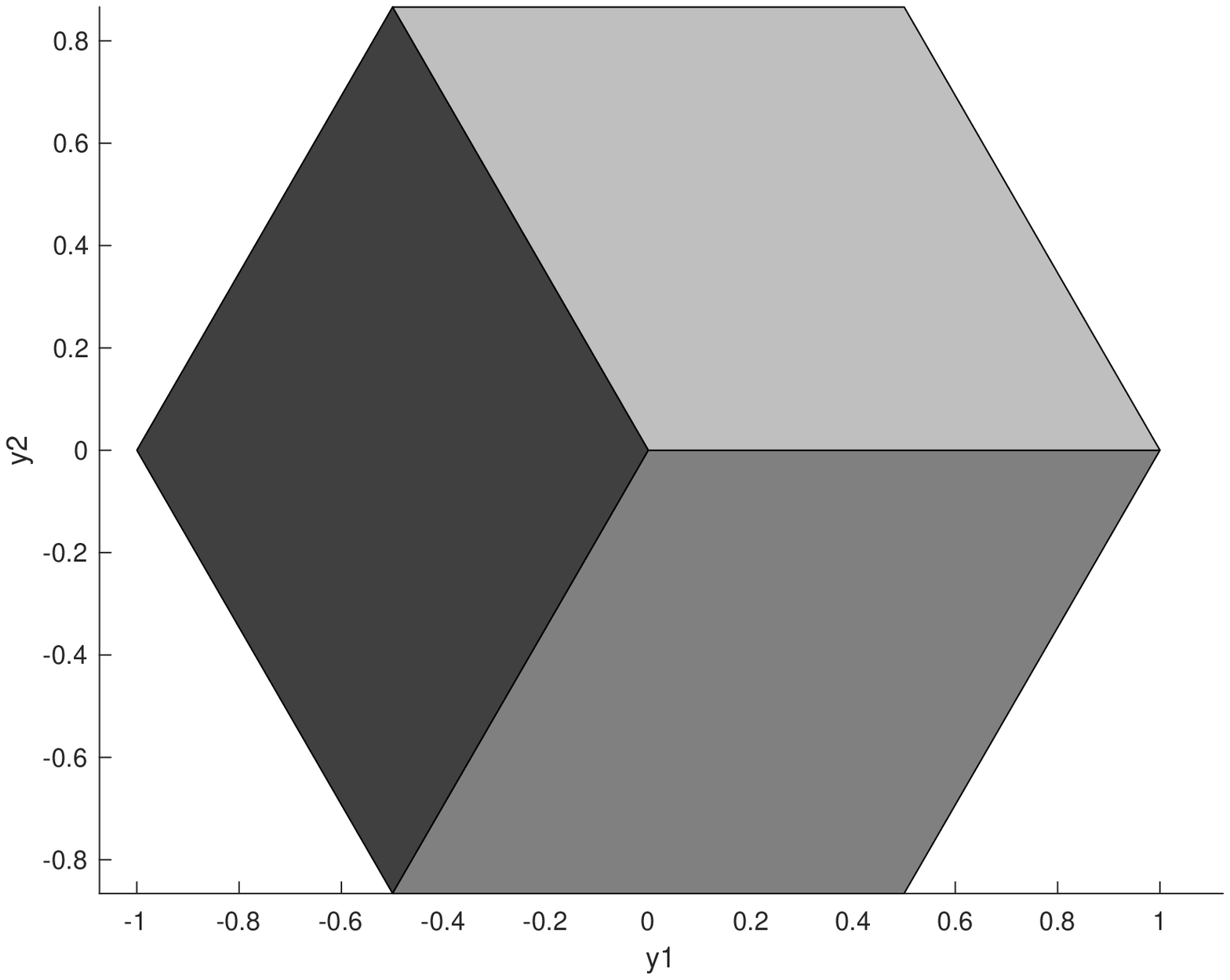}\quad
\includegraphics[width=0.25\textwidth, clip=true]{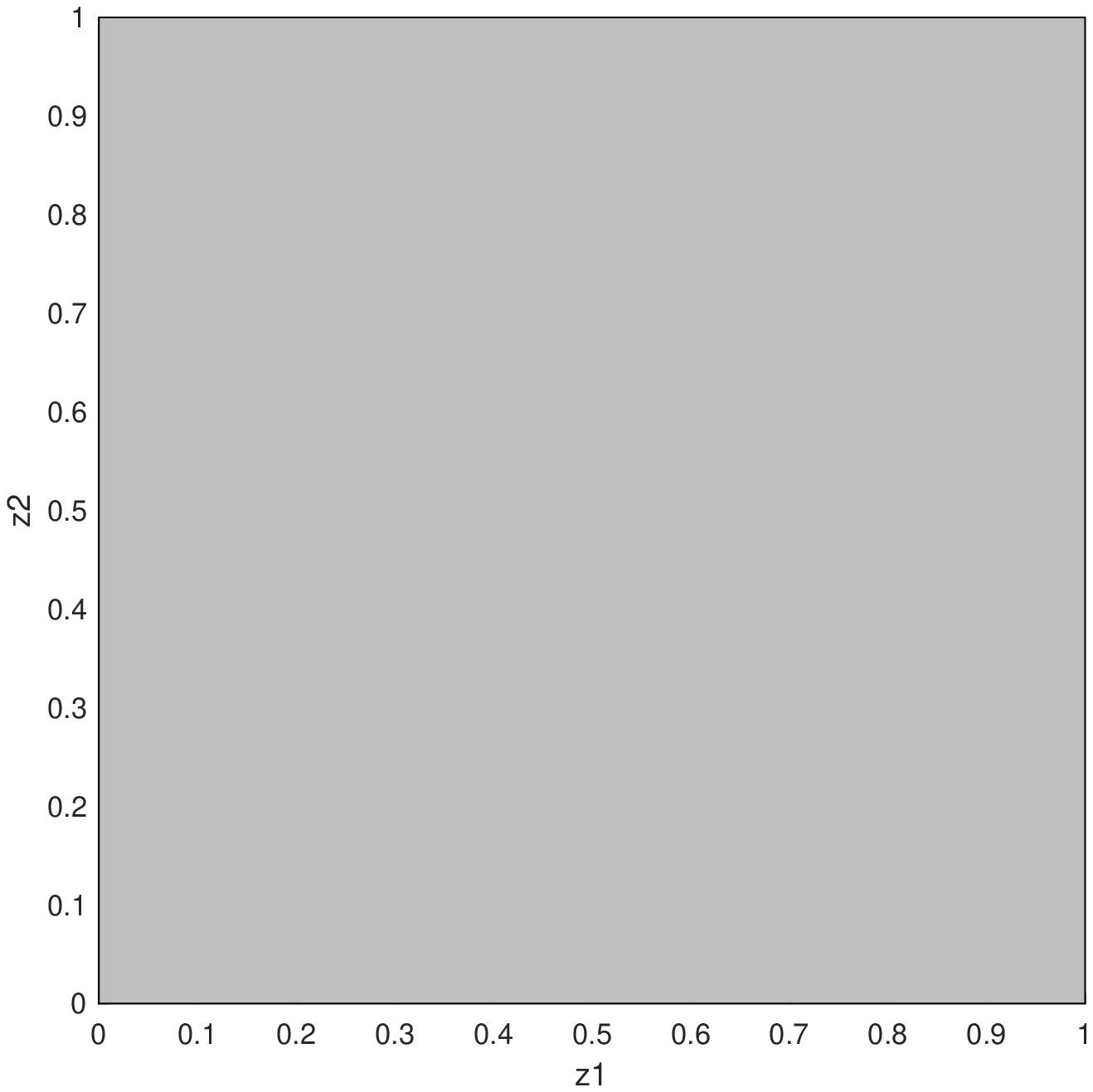}
\end{center}
\caption{\label{fig:patch}
A mesh $\set{\Gamma_m}{m = 1,\dots,24}$ of some boundary $\Gamma$ is depicted, where the patch $\pi_\gamma(z)$ for $z=(1/20,1/10)$ is highlighted in gray (left). 
The corresponding patch $\overline\pi_\gamma(z)$ is depicted (middle). 
The mapping $\gamma_z$ maps the light, medium, and dark gray affine elements to the respective elements on $\Gamma$.
The parameter domain $\widehat \Gamma_m$ of the light gray element is depicted (right). 
The mapping $\gamma_z^{-1}\circ\gamma_m$ maps $\widehat\Gamma_m$ affinely onto the light gray element in $\overline\pi_\gamma(z)$.
}
\end{figure}

\subsection{Uniformly refined tensor meshes and tensor-product splines}
For $m\in\{1,\dots,M\}$, let $(p_{1,m},\dots,p_{d-1,m})$  be a vector of fixed  polynomial degrees in $\N$, and set 
\begin{align}
p_{\max}:=\max\set{p_{i,m}}{i\in\{1,\dots,d\}\revision{, }  m\in\{1,\dots,M\}}.
\end{align}
Let 
\begin{align}{\widehat\KK}_{0,m}=(\widehat \KK_{1(0,m)},\dots,\widehat\KK_{{(d-1)}(0,m)})\end{align} be a fixed initial $(d-1)$-dimensional vector of \textit{$p_{i,m}$-open knot vectors}, i.e., 
\begin{align}
\widehat\KK_{i(0,m)}=(t_{i(0,m),j})_{j=0}^{N_{i(0,m)}+p_{i,m}}
\end{align}
for (in $j$) monotonously increasing knots $t_{i(0,m),j}\in[0,1]$ such that $0=t_{i(0,m),0}=\dots=t_{i(0,m),p_{i,m}}<t_{i(0,m),p_{i,m}+1}$ and $t_{i(0,m),N_{i,m}-1}<t_{i(0,m),N_{i,m}}=\dots=t_{i(0,m),N_{i,m}+p_{i,m}}=1$.
Additionally, we suppose for the multiplicity \revision{(within the knot vector $\widehat\KK_{i(0,m)}$)} of all interior knots $t_{i(0,m),j}\in(0,1)$  that
\begin{align}
\#_{i(0,m)} t_{i(0,m),j}\le p_{i,m}\quad\text{for all }i\in\{1,\dots,d-1\} \text{ and }  j\in\{2+p_{i,m},\dots,N_{i(0,m)}-1\}.
\end{align}
For each $i\in\{1,\dots,d-1\}$, this yields the corresponding \textit{mesh on $[0,1]$}
\begin{align}
\widehat\TT_{i(0,m)}:=\set{[t_{i(0,m),j},t_{i(0,m),j+1}]}{j\in\{0\dots,N_{i(0,m)}-1\}\wedge t_{i(0,m),j}<t_{i(0,m),j+1}}
\end{align}
and the corresponding \textit{splines on $[0,1]$}
\begin{align}
\begin{split}
\widehat \SS^{p_{i,m}}(\widehat\KK_{i(0,m)}):=\big\{\widehat S:[0,1]\to\C:\,\widehat S|_{\widehat T} \text{ is a polynomial of degree } p_{i,m} \text{ for }\widehat T\in\widehat \TT_{i(0,m)}\\
\text{and }\widehat S \text{ is }(\, p_{i,m}-\#_{i(0,m)} t_{i(0,m),j})\text{\revision{-times continuously differentiable}}\\ 
\text{ at }t_{i(0,m),j}\text{ for }j\in\{2+p_{i,m},\dots,N_{i(0,m)}-1\big\}.
\end{split}
\end{align}
The so-called \textit{B-splines} 
form a basis of the latter space
\begin{align}
\widehat \SS^{p_{i,m}}(\widehat\KK_{i(0,m)})=\mathrm{span}(\widehat \BB_{i(0,m)})\quad\text{with}\quad \widehat \BB_{i(0,m)}=\set{\widehat B_{i(0,m),j}}{j\in\{1,\dots,N_{(i(0,m)}\}};
\end{align}
\revision{see Figure~\ref{fig:bsplines} for an illustrative example.}
\revision{For convenience of the reader, we recall the following definition of B-splines via divided differences\footnote{\revision{For any (sufficiently smooth) function $F:\R\to\R$ and arbitrary points $x_0\le x_1\le\dots\le x_{p+1}$, divided differences are recursively defined via $[x_0]F:=F(x_0)$ and $[x_0,\dots,x_{j+1}]F:=\big([x_1,\dots,x_{j+1}]F - [x_0,\dots,x_j]F\big) / (x_{j+1}-x_0)$ if $x_0<x_{j+1}$ and $[x_0,\dots,x_{j+1}]F:=F^{(j+1)}(x_0)/(j+1)!$ else for $j=0,\dots,p$.}},
\begin{align*}
\widehat B_{i(0,m),j}(t) := (t_{i(0,m),j+p_{i,m}} - t_{i(0,m),j-1})  [t_{i(0,m),j-1},\dots,t_{i(0,m),j+p_{i,m}})] \big(\max\{(\cdot)-t,0\}^p\big);
\end{align*}
see also \cite{boor} for equivalent definitions and elementary properties.}
We only mention that each B-spline $\widehat B_{i(0,m),j}$ is positive on $(t_{i(0,m),j-1},t_{i(0,m),j+p_{i,m}})$ and vanishes outside of $[t_{i(0,m),j-1},t_{i(0,m),j+p_{i,m}}]$.
Finally, we define the \textit{tensor mesh on $\widehat\Gamma_m=[0,1]^{d-1}$}
\begin{align}
\widehat\TT_{0,m}:=\set{\widehat T_1\times\dots\times\widehat T_{d-1}}{\widehat T_{i}\in\widehat\TT_{i(0,m)}\text{ for }i\in\{1,\dots,d-1\}}
\end{align}
and the \textit{tensor-product splines} 
\begin{align}
\widehat \SS^{(p_{1,m},\dots,p_{d-1,m})}(\widehat\KK_{0,m}):=\set{\widehat S_1\otimes\dots\otimes\widehat S_{d-1}}{\widehat S_{i}\in\widehat \SS^{p_{i,m}}(\widehat\KK_{i(0,m)})\text{ for }i\in\{1,\dots,d-1\}}.
\end{align}
A corresponding basis is given by the set of \textit{tensor-product B-splines}
\begin{align}\label{eq:tensor spline basis}
\widehat \SS^{(p_{1,m},\dots,p_{d-1,m})}(\widehat\KK_{0,m})=\mathrm{span}(\widehat \BB_{0,m})\quad\text{with}\quad \widehat \BB_{0,m}:=\big\{&\widehat\beta_1\otimes\dots\otimes\widehat\beta_{d-1}:\\
&\,\widehat\beta_i\in\widehat \BB_{i(0,m)}\text{ for }i\in\{1,\dots,d-1\}\big\}.\notag
\end{align}

\begin{figure}[t] 
\begin{center}
\includegraphics[width=0.4\textwidth]{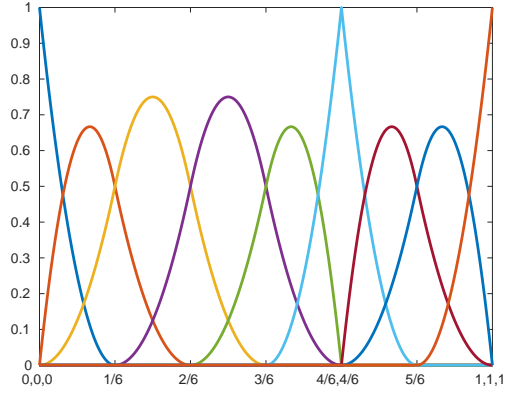}
\end{center}
\caption{\label{fig:bsplines}
\revision{The B-splines $\widehat\BB_{i(0,m)}$ for the polynomial degree $p_{i,m} = 2$ and the $p_{i,m}$-open knot vector $\widehat\KK_{i(0,m)}=(0, 0, 0, 1/6,2/6,3/6,4/6,4/6,5/6, 1, 1, 1)$ are depicted.}
}
\end{figure}

For $i\in\{1,\dots,d-1\}$, we set $\widehat\KK_{i(\uni{0},m)}:=\widehat\KK_{i(0,m)}$ and  recursively define  $\widehat\KK_{i(\uni{k+1},m)}$ for $k\in\N_0$ as the uniform $h$-refinement of 
\begin{align}
\widehat\KK_{i(\uni{k},m)}=(t_{i(\uni{k},m),j})_{j=0}^{N_{i(\uni{k},m)}+p_{i,m}},
\end{align} i.e., it is obtained by inserting the midpoint of any non-empty knot span $[t_{i(\uni{k},m),j},t_{i(\uni{k},m),j+1}]$ with multiplicity one to the knots $\widehat\KK_{i(\uni{k},m)}$. 
\revision{In particular, the old knots $\widehat\KK_{i(\uni{k},m)}$ are not changed in $\widehat\KK_{i(\uni{k+1},m)}$.}
Replacing the index $0$ by $\uni{k}$, one can define the corresonding meshes, splines, and B-splines $\widehat\TT_{i(\uni{k},m)}$, $\widehat \SS^{p_{i,m}}(\widehat\KK_{i(\uni{k},m)})$, $\widehat \BB_{i(\uni{k},m)}$, $\widehat\TT_{\uni{k},m}$, $\widehat \SS^{(p_{1,m},\dots,p_{d-1,m})}(\widehat\KK_{\uni{k},m})$, $\widehat \BB_{\uni{k},m}$ as before. 
This yields a  nested sequence of tensor-product spline spaces 
\begin{align}
\widehat\SS^{(p_1,\dots,p_d)}(\widehat\KK_{\uni{k},m})\subset\widehat\SS^{(p_1,\dots,p_d)}(\widehat\KK_{\uni{k+1},m})\subset C^0([0,1)^{d-1}),
\end{align}
where the last relation follows from the assumption for the multiplicity of the interior knots.

\begin{remark}\label{eq:different uni}
We define the  nested sequence of  tensor-product spline spaces in the most simple way. 
Another natural approach is to use uniform $h$-refinement with knots of some fixed  multiplicities $1\le q_{i,m}\le p_{i,m}$ for $i\in\{1,\dots,d-1\}$ as in \cite[Remark~3.4.1]{diss} instead of multiplicity one.
The choice $q_{i,m}=1$ for all $i\in\{1,\dots,d-1\}$, that we consider, leads to the highest possible regularity of the splines at newly inserted mesh lines, whereas the maximal choice $q_{i,m}=p_{i,m}$ only leads to continuity at new mesh lines.
In particular, if all interior knots of the initial knots $\widehat \KK_{i(0,m)}$ already have multiplicity $p_{i,m}$, the latter choice leads to the space of all continuous $\widehat\TT_{\uni{k},m}$-piecewise tensor-product polynomials of degree $(p_{1,m},\dots,p_{d-1,m})$.
\end{remark}

\subsection{Hierarchical  meshes and splines on the parameter domains}\label{subsec:parameter hsplines bem}
Let $m\in\{1,\dots,M\}$.
We say that a set 
\begin{align}
\widehat\TT_{\coarse,m}\subseteq\bigcup_{k\in\N_0}\widehat \TT_{\uni{k},m}
\end{align}
 is a \textit{hierarchical mesh on the parameter domain $\widehat\Gamma_m=[0,1]^{d-1}$} if it is a partition of $[0,1]^{d-1}$ in the sense that $\bigcup\widehat \TT_{\coarse,m}=[0,1]^{d-1}$, where the intersection of  two different elements $\widehat T\neq \widehat T'$ with $\widehat T,\widehat T'\in\widehat\TT_{\coarse,m}$ has ($(d-1)$-dimensional) measure zero.
Since $\widehat\TT_{\uni{k},m}\cap\widehat\TT_{\uni{k'},m}=\emptyset$ for $k,k'\in\N_0$ with $k\neq k'$, we can define for an element $\widehat T\in\widehat\TT_{\coarse,m}$,
\begin{align}
\level(\widehat T):=k\in\N_0 \quad\text{with }\widehat T\in\widehat\TT_{\uni{k},m}.
\end{align}
 For an illustrative example of a hierarchical mesh, see Figure~\ref{fig:hiermesh}.
In particular, any uniformly refined tensor mesh $\widehat\TT_{\uni{k},m}$ with $k\in\N_0$ is a hierarchical mesh.

For a hierarchical mesh $\widehat\TT_{\coarse,m}$, we define a corresponding nested sequence $(\widehat\Omega^k_{\coarse,m})_{k\in\N_0}$ of closed subsets of $[0,1]^{d-1}$ by
\begin{align}\label{eq:omega k}
\widehat\Omega_{\coarse,m}^k:=\bigcup\set{\widehat T\in\widehat\TT_{\coarse,m}}{\level(\widehat T)\ge k}.
\end{align}
It holds that
\begin{align}\label{eq:parameter mesh}
\widehat\TT_{\coarse,m}=\bigcup_{k\in\N_0}\set{\widehat T\in\widehat\TT_{\uni{k},m}}{\widehat T\subseteq \widehat \Omega_{\coarse,m}^k\wedge \widehat T\not\subseteq\widehat\Omega_{\coarse,m}^{k+1}}.
\end{align}
Indeed, in the literature, one usually assumes that one is given the sequence $(\widehat\Omega^k_{\coarse,m})_{k\in\N_0}$ and defines the corresponding hierarchical mesh via \eqref{eq:parameter mesh}.
Note that, for $\widehat T\in\widehat\TT_{\coarse,m}$, $\level(\widehat T)$ is  the unique integer $k\in\N_0$ with $\widehat T\subseteq \widehat \Omega_{\coarse,m}^k$ and $\widehat T\not\subseteq\widehat\Omega_{\coarse,m}^{k+1}$.
With this, we can define the corresponding \textit{hierarchical B-splines on the parameter domain $\widehat\Gamma_m=[0,1]^{d-1}$}
\begin{align}\label{eq:short cHH}
\widehat\BB_{\coarse,m}:=\bigcup_{k\in\N_0}\set{\widehat\beta\in\widehat\BB_{\uni{k},m}}{ \supp(\widehat\beta)\subseteq\widehat\Omega_{\coarse,m}^k\wedge\supp(\widehat\beta)\not\subseteq\widehat\Omega_{\coarse,m}^{k+1}}.
\end{align}
The space of ($D$-dimensional) \textit{hierarchical splines on the parameter domain $\widehat\Gamma_m=[0,1]^{d-1}$} is just defined as the product of the resulting span
\begin{align}
\widehat\XX_{\coarse,m}:=\mathrm{span}(\widehat\BB_{\coarse,m})^D.
\end{align}
It is easy to check that if $\widehat \TT_{\coarse,m}$ is a tensor mesh and hence coincides with some $\widehat\TT_{\uni{k},m}$, then the hierarchical basis and the standard tensor-product B-spline basis are the same.
Thus, the notation is consistent with the notation from \eqref{eq:tensor spline basis}.
One can prove that $\widehat\BB_{\coarse,m}$ is linearly independent; see, e.g., \cite[Lemma~2]{juttler}.
Since $\widehat\BB_{\uni{k}}\cap\widehat\BB_{\uni{k'}}=\emptyset$ for $k\neq k'\in\N_0$, we can define for a basis function $\widehat \beta\in\widehat\BB_{\coarse,m}$
\begin{align}
\level(\widehat \beta):=k\in\N_0 \quad\text{with }\widehat \beta\in\widehat\BB_{\uni{k}}.
\end{align}
Note that $\level(\widehat\beta)$ is  the unique integer $k\in\N_0$ with $\supp(\widehat\beta)\subseteq\widehat\Omega_{\coarse,m}^k$ and $\supp(\widehat\beta)\not\subseteq\widehat\Omega_{\coarse,m}^{k+1}$.

\begin{figure}[t] 
\psfrag{z}[r][r]{z}
\psfrag{T}[r][r]{T}
\begin{center}
\includegraphics[width=0.3\textwidth]{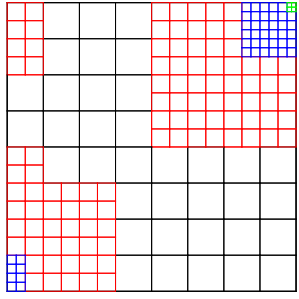}
\end{center}
\caption{\label{fig:hiermesh}
A two-dimensional hierarchical mesh $\widehat\TT_{\coarse,m}$ is depicted, where $\widehat\Omega_{\coarse,m}^4=\emptyset$.
The corresponding  domains $\widehat\Omega^0_{\coarse,m}\supseteq\widehat\Omega^1_{\coarse,m}\supseteq\widehat\Omega^2_{\coarse,m}\supseteq\widehat\Omega^3_{\coarse,m}$ are highlighted in black, red, blue and green.
}
\end{figure}

The hierarchical basis  $\widehat\BB_{\coarse,m}$ and the mesh $\widehat\TT_{\coarse,m}$ are compatible in the sense that 
\begin{align}\label{eq:supp elements}
\supp(\widehat\beta)=\bigcup_{k\ge \level(\widehat\beta)}\set{\widehat T\in\widehat\TT_{\coarse,m}\cap\widehat\TT_{{\rm uni}(k),m}}{\widehat T\subseteq\supp(\widehat\beta)};
\end{align}
see, e.g., \cite[Section~3.2]{igafem} for the elementary argumentation.

Finally, we say that a hierarchical mesh $\widehat\TT_{\fine,m}$ is \textit{finer} than $\widehat\TT_{\coarse,m}$ if $\widehat\TT_{\fine,m}$ is obtained from $\widehat\TT_{\coarse,m}$ via iterative dyadic bisection.
Formally, this can be stated as $\widehat\Omega_{\coarse,m}^k\subseteq\widehat\Omega_{\fine,m}^k$ for all $k\in\N_0$.
In this case, the corresponding hierarchical spline spaces are nested according to \cite[Proposition~4]{juttler}, i.e.,
\begin{align}\label{eq:hierarchical nested pardom bem}
\widehat\XX_{\coarse,m}\subseteq\widehat\XX_{\fine,m}.
\end{align}
For a more detailed introduction to hierarchical meshes and splines, we refer to, e.g., \cite{juttler,juttler2,garau,speleers}.

\subsection{Admissible hierarchical meshes on the parameter domains} 
We recall the definition and some properties of \textit{admissible meshes} introduced in \cite[Section~3.5]{igafem}.
Let $m\in\{1,\dots,M\}$ and $\widehat\TT_{\coarse,m}$ be an arbitrary hierarchical mesh on $\widehat\Gamma_m$.
We define the set of all \textit{neighbors}  of an element $\widehat T\in\widehat\TT_{\coarse,m}$ as
\begin{align}\label{eq:neigbors}
\Nu_{\coarse,m}(\widehat T):=\set{\widehat T'\in\widehat\TT_{\coarse,m}}{\exists\widehat\beta\in\widehat\BB_{\coarse,m}\quad|\widehat T\cap\supp(\widehat\beta)|, |\widehat T'\cap\supp(\widehat \beta)|>0},
\end{align}
The following lemma from \cite[Lemma~5.2]{igafem} provides a relation between the set of neighbors and the patch of an element $\widehat T\in\widehat\TT_{\coarse,m}$.

\begin{lemma}\label{lem:patch2neigbor}
There holds that 
\begin{align}\label{eq:patch2neigbor}
\Pi_{\coarse,m}(\widehat T)\subseteq \Nu_{\coarse,m}(\widehat T)\quad\text{for all } \widehat T\in\widehat\TT_{\coarse,m},
\end{align}
where $\Pi_{\coarse,m}(\widehat T):=\set{\widehat T'}{\widehat T\cap \widehat T\neq \emptyset}$.
\hfill$\square$
\end{lemma}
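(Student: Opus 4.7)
The plan is to reduce the claim to finding, for each pair $\widehat T\in\widehat\TT_{\coarse,m}$ and $\widehat T'\in\Pi_{\coarse,m}(\widehat T)$, a hierarchical B-spline $\widehat\beta\in\widehat\BB_{\coarse,m}$ with $\widehat T\subseteq\supp(\widehat\beta)$ \emph{and} $\widehat T'\subseteq\supp(\widehat\beta)$. This reformulation is possible thanks to~\eqref{eq:supp elements}: the support of any hierarchical B-spline is a union of elements of $\widehat\TT_{\coarse,m}$, so for $\widehat T''\in\widehat\TT_{\coarse,m}$ the condition $|\widehat T''\cap\supp(\widehat\beta)|>0$ already forces $\widehat T''\subseteq\supp(\widehat\beta)$. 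Hence the neighbor definition~\eqref{eq:neigbors} collapses to simultaneous containment in one basis-function support, which is the property I will try to verify.

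The first building block is a \emph{covering property}: every element $\widehat T\in\widehat\TT_{\coarse,m}$ is contained in $\supp(\widehat\beta)$ for some $\widehat\beta\in\widehat\BB_{\coarse,m}$. I would deduce this from the partition-of-unity structure: the hierarchical space $\widehat\XX_{\coarse,m}$ contains the coarsest tensor-product space and in particular the constant $\mathbf{1}$, so via the refinement relations one can write $\mathbf{1}=\sum_{\widehat\beta\in\widehat\BB_{\coarse,m}} c_{\widehat\beta}\,\widehat\beta$ with positive coefficients. Since each $\widehat\beta$ is non-negative, picking any interior point of $\widehat T$ yields one $\widehat\beta$ with strict positivity there, and the representation~\eqref{eq:supp elements} promotes this to $\widehat T\subseteq\supp(\widehat\beta)$. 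Among all such $\widehat\beta$, I would select one of minimal level $k^\star$, call it $\widehat\beta^\star$; by construction $\supp(\widehat\beta^\star)\subseteq\widehat\Omega^{k^\star}_{\coarse,m}$ and $\supp(\widehat\beta^\star)\not\subseteq\widehat\Omega^{k^\star+1}_{\coarse,m}$, and the level-$k^\star$ tensor-product structure gives $\supp(\widehat\beta^\star)$ as a product of $(p_{i,m}+1)$ consecutive level-$k^\star$ knot spans in each coordinate.

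The second building block is geometric: I need to check that the patch neighbour $\widehat T'$ of $\widehat T$ is also contained in $\supp(\widehat\beta^\star)$. Let $z\in\widehat T\cap\widehat T'$. Because the uniform meshes $\widehat\TT_{\uni{k},m}$ are nested under dyadic bisection, $\widehat T$ and $\widehat T'$ are either at the same level or one is a descendant of a cell adjacent to the other. The minimality of $k^\star$ together with the conditions $\widehat T\subseteq\widehat\Omega^{k^\star}_{\coarse,m}$ and $\widehat T\not\subseteq\widehat\Omega^{k^\star+1}_{\coarse,m}$ forces $\level(\widehat T)=k^\star$ (any strictly lower $k^\star$ would contradict the upper inclusion via the neighbouring cell), and since $\widehat T'$ shares the point $z$ with $\widehat T$ and lies in $\widehat\TT_{\coarse,m}$, the level-$k^\star$ predecessor of $\widehat T'$ is adjacent to $\widehat T$ and thus sits inside the $(p_{1,m}+1)\times\cdots\times(p_{d-1,m}+1)$ box $\supp(\widehat\beta^\star)$; applying~\eqref{eq:supp elements} once more then yields $\widehat T'\subseteq\supp(\widehat\beta^\star)$.

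The main obstacle I expect is precisely the cross-level bookkeeping in the last step: when $\level(\widehat T')>\level(\widehat T)=k^\star$, one must rule out the pathological situation where the relevant level-$k^\star$ B-spline has support extending out of $\widehat\Omega^{k^\star}_{\coarse,m}$ and is therefore \emph{not} in the hierarchical basis. This is where the minimality of $k^\star$ and the monotonicity of the sets $\widehat\Omega^k_{\coarse,m}$ are essential, and it is likely the reason the proof in~\cite{igafem} is not entirely trivial; I would verify these remaining compatibility conditions in detail as the last step of the write-up.
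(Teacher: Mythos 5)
Your first two paragraphs are sound: the reduction via~\eqref{eq:supp elements} (so that positive-measure overlap of an element with $\supp(\widehat\beta)$ is equivalent to containment) is correct, and so is the covering conclusion, although the clean justification is the truncated partition of unity~\eqref{eq:truncated partition} combined with~\eqref{eq:bounds for Trunc}, not an unproven claim about positive coefficients in the hierarchical B-spline expansion of the constant function. The genuine gap is in the third paragraph, where you fix the minimal-level $\widehat\beta^{\star}$ covering $\widehat T$ and try to show $\widehat T'\subseteq\supp(\widehat\beta^{\star})$ for every patch neighbour $\widehat T'$. Two things fail. First, the asserted identity $\level(\widehat T)=k^{\star}$ is false in general: $\widehat T\subseteq\supp(\widehat\beta^{\star})\subseteq\widehat\Omega_{\coarse,m}^{k^{\star}}$ only gives $\level(\widehat T)\ge k^{\star}$, and the non-inclusion $\supp(\widehat\beta^{\star})\not\subseteq\widehat\Omega_{\coarse,m}^{k^{\star}+1}$ is a property of the full support, not of $\widehat T$ alone. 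Second, and more fundamentally, the minimal-level $\widehat\beta^{\star}$ that covers $\widehat T$ need not meet $\widehat T'$ at all. Take $d=2$, $p=1$, initial knots $(0,0,1/4,1/2,3/4,1,1)$ and $\widehat\Omega_{\coarse,m}^{1}=[1/4,3/4]$. Then $\widehat T=[3/8,1/2]$ has level $1$, while the unique minimal-level hierarchical B-spline covering it is the level-$0$ hat with $\supp(\widehat\beta^{\star})=[0,1/2]$, so $k^{\star}=0\neq\level(\widehat T)$; and the patch neighbour $\widehat T'=[1/2,5/8]$ satisfies $|\widehat T'\cap[0,1/2]|=0$. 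The lemma still holds for this pair (the level-$1$ hat with support $[3/8,5/8]$ witnesses $\widehat T'\in\Nu_{\coarse,m}(\widehat T)$), but not via your $\widehat\beta^{\star}$.

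The fix is to reverse the order of quantifiers rather than to refine the cross-level bookkeeping: pick a point $z\in\widehat T\cap\widehat T'$ on the interface first, and only then invoke the partition of unity. From $\sum_{\widehat\beta\in\widehat\BB_{\coarse,m}}\Trunc_{\coarse,m}(\widehat\beta)(z)=1$ there is some $\widehat\beta\in\widehat\BB_{\coarse,m}$ with $\Trunc_{\coarse,m}(\widehat\beta)(z)>0$, hence $\widehat\beta(z)>0$ by~\eqref{eq:bounds for Trunc}. Since all interior knot multiplicities are at most $p_{i,m}$, the tensor-product B-spline $\widehat\beta$ is continuous, so $\widehat\beta>0$ on some ball $B_\varepsilon(z)$; and since $z$ lies in the closed hyperrectangles $\widehat T$ and $\widehat T'$, both $|\widehat T\cap B_\varepsilon(z)|>0$ and $|\widehat T'\cap B_\varepsilon(z)|>0$, whence $|\widehat T\cap\supp(\widehat\beta)|>0$ and $|\widehat T'\cap\supp(\widehat\beta)|>0$, so $\widehat T'\in\Nu_{\coarse,m}(\widehat T)$ directly by~\eqref{eq:neigbors}. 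This produces, for each pair, a basis function that sees both elements simultaneously, and the continuity step is exactly where the knot-multiplicity hypothesis enters, as the paper flags in Remark~\ref{rem:Nu for p0}. Note also that the paper itself cites~\cite[Lemma~5.2]{igafem} for this result rather than reproducing a proof.
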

We call  $\widehat\TT_{\coarse,m}$  \textit{admissible} if 
\begin{align}\label{def:admissible}
|\level(\widehat T)-\level(\widehat T')|\le 1\quad\text{for all }\widehat T,\widehat T'\in\widehat \TT_{\coarse,m} \text{ with }\widehat T'\in\Nu_{\coarse,m}(\widehat T).
\end{align}
Let $\widehat\T_m$ be the set of all admissible hierarchical meshes on $\widehat\Gamma_m$.
Clearly, all tensor meshes $\widehat\TT_{\uni{k},m}$, $k\in\N_0$, belong to $\widehat\T_m$.
 Moreover, admissible meshes satisfy the following interesting properties, which are also  important for an efficient implementation of finite or boundary element methods with hierarchical splines; see \cite[Proposition~3.2]{igafem} for a proof.
 
\begin{proposition}\label{prop:bounded number}
For each $\widehat \TT_{\coarse,m}\in\widehat\T_m$, 
the support of any hierarchical  B-spline $\widehat \beta\in\widehat \BB_{\coarse,m}$ is the union  of at most  $2^d (p_{\max}+1)^d$ elements $\widehat T'\in\widehat \TT_{\coarse,m}$.
Moreover, for any $\widehat T\in\widehat \TT_{\coarse,m}$, there are at most $2(p_{\max}+1)^d$ basis functions $\widehat \beta'\in\widehat \BB_{\coarse,m}$ with support on $\widehat T$, i.e., $|\supp(\widehat \beta')\cap \widehat T|>0$.\hfill$\square$
\end{proposition}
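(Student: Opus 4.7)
The proof will be a direct combinatorial argument that leverages (i) admissibility \eqref{def:admissible}, (ii) the support compatibility \eqref{eq:supp elements}, and (iii) the standard fact that, in the uniform tensor B-spline setting at a single level $k$, each element of $\widehat\TT_{\uni{k},m}$ lies in the support of at most $\prod_{i=1}^{d-1}(p_{i,m}+1)\le(p_{\max}+1)^{d-1}$ tensor-product B-splines of $\widehat\BB_{\uni{k},m}$ (and, conversely, each such B-spline has support covering at most that many elements). This single-level fact I will treat as known, since it is immediate from the tensor structure and the support property of univariate B-splines recalled just before \eqref{eq:tensor spline basis}.

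For the first claim, fix $\widehat\beta\in\widehat\BB_{\coarse,m}$ with $k:=\level(\widehat\beta)$. By \eqref{eq:supp elements}, $\supp(\widehat\beta)$ is a disjoint (up to measure zero) union of elements of $\widehat\TT_{\coarse,m}$, each of level $\ge k$. The level condition on $\widehat\beta$ guarantees at least one such element $\widehat T_0$ actually has level exactly $k$. Any other element $\widehat T_1\subseteq\supp(\widehat\beta)$ belongs to $\Nu_{\coarse,m}(\widehat T_0)$ via $\widehat\beta$ itself, so admissibility \eqref{def:admissible} forces $|\level(\widehat T_1)-k|\le 1$; combined with $\level(\widehat T_1)\ge k$, this gives $\level(\widehat T_1)\in\{k,k+1\}$. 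Now the single-level fact bounds the number of level-$k$ elements of $\widehat\TT_{\uni{k},m}$ in $\supp(\widehat\beta)$ by $(p_{\max}+1)^{d-1}$, and dyadic bisection in $d-1$ directions turns each into at most $2^{d-1}$ elements of level $k+1$. The total count is therefore at most $(p_{\max}+1)^{d-1}\cdot 2^{d-1}\le 2^d(p_{\max}+1)^d$.

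For the second claim, fix $\widehat T\in\widehat\TT_{\coarse,m}$ with $k:=\level(\widehat T)$. Since $\widehat T$ is in the hierarchical mesh, it is not further subdivided, so no element of $\widehat\TT_{\coarse,m}$ of level $>k$ meets $\widehat T$ with positive measure; equivalently, $|\widehat T\cap \widehat\Omega_{\coarse,m}^{k+1}|=0$. Now take any $\widehat\beta'\in\widehat\BB_{\coarse,m}$ with $|\supp(\widehat\beta')\cap\widehat T|>0$ and put $\ell:=\level(\widehat\beta')$. By \eqref{eq:supp elements}, $\supp(\widehat\beta')$ is a union of elements of $\widehat\TT_{\coarse,m}$, so $\widehat T$ must coincide with one of them and in particular $\widehat T\subseteq\supp(\widehat\beta')\subseteq\widehat\Omega_{\coarse,m}^\ell$, forcing $\ell\le k$. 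Conversely, applying the level-reasoning of the first claim to $\widehat\beta'$ yields $\level(\widehat T)\in\{\ell,\ell+1\}$, so $\ell\in\{k-1,k\}$. For each of these two possible levels, the single-level fact bounds by $(p_{\max}+1)^{d-1}$ the number of tensor-product B-splines of that level whose support contains $\widehat T$ (noting that $\widehat T\in\widehat\TT_{\uni{k},m}$ is contained in a unique element of $\widehat\TT_{\uni{k-1},m}$). Summing gives at most $2(p_{\max}+1)^{d-1}\le 2(p_{\max}+1)^d$.

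The only mildly subtle point is the ``measure-positive intersection forces containment'' step in the second claim, which relies on using \eqref{eq:supp elements} together with the fact that $\widehat\TT_{\coarse,m}$ is a partition of $[0,1]^{d-1}$. Everything else is bookkeeping driven by admissibility; no ingredient beyond \eqref{def:admissible}, \eqref{eq:supp elements}, and univariate B-spline support counts is required.
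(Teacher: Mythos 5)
Your proof is correct, and it follows the same combinatorial strategy that underlies \cite[Proposition~3.2]{igafem}, which the paper cites in lieu of a written-out proof: use \eqref{eq:supp elements} together with admissibility \eqref{def:admissible} to confine the levels of the relevant elements and basis functions to a two-level window $\{k,k+1\}$ resp.\ $\{k-1,k\}$, then invoke the elementary tensor-product B-spline support count at a single level, and finally account for dyadic bisection. The two potentially delicate steps --- (i) that $\supp(\widehat\beta)$ must contain at least one element of level exactly $\level(\widehat\beta)$, and (ii) that $|\supp(\widehat\beta')\cap\widehat T|>0$ together with \eqref{eq:supp elements} forces $\widehat T\subseteq\supp(\widehat\beta')$ since $\widehat\TT_{\coarse,m}$ is a partition --- are both handled correctly. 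One cosmetic remark: since the parameter domains $\widehat\Gamma_m=[0,1]^{d-1}$ are $(d-1)$-dimensional, your argument actually yields the sharper bounds $2^{d-1}(p_{\max}+1)^{d-1}$ and $2(p_{\max}+1)^{d-1}$; the proposition carries the weaker exponents $d$ verbatim from \cite{igafem}, where the parameter domain is $d$-dimensional, so the stated bound is met with room to spare.
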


\begin{remark}\label{rem:connected}
{\rm (a)}
 Since the support of any $\widehat \beta\in\widehat \BB_{\coarse,m}$ is connected, Proposition~\ref{prop:bounded number} particularly shows that $|\widehat T' \cap\supp(\widehat\beta)|>0$ for some element $\widehat T'\in\widehat\TT_{\coarse,m}$ implies that $\supp(\widehat\beta)\subseteq\pi_{\coarse,m}^{2(p_{\max}+1)}(\widehat T')$, \revision{where $\pi_{\coarse,m}^{2(p_{\max}+1)}(\widehat T')$ is recursively defined via 
$\pi_{\bullet,m}^0(\widehat T') := \widehat T'$ and $\pi_{\bullet,m}^q(\widehat T') := \bigcup\set{\widehat  T\in\widehat \TT_{\bullet,m}}{ \widehat{T}\cap \pi_{\bullet,m}^{q-1}(\widehat T')\neq \emptyset}$ for $q>0$.}
 
{\rm (b)}
\cite{bg} studied  related  admissible hierarchical meshes. 
Whereas our admissibility is chosen such that Proposition~\ref{prop:bounded number} holds for hierarchical B-splines, their weaker definition of admissibility  only guarantees Proposition~\ref{prop:bounded number} for \emph{truncated hierarchical B-splines}; see \cite[Remark~3.2]{igafem}.
We also note that our admissibility guarantees Proposition~\ref{prop:bounded number} for both standard and truncated hierarchical B-splines.
This follows immediately from \eqref{eq:bounds for Trunc} below.
Moreover, the computation of the truncated hierarchical B-splines simplifies greatly on our admissible meshes; see \cite[Proposition~5.2]{igafem}.
\end{remark}

\begin{remark}\label{rem:Nu for p0}
For $\widehat\TT_{\coarse,m}\in\widehat\T_m$, Lemma~\ref{lem:patch2neigbor} immediately yields that 
\begin{align}\label{def:admissible2}
|\level(\widehat T)-\level(\widehat T')|\le 1\quad\text{for all }\widehat T,\widehat T'\in\widehat \TT_\coarse \text{ with }\widehat T'\in\Pi_{\coarse,m}(\widehat T).
\end{align}
We note that in the proof of Lemma~\ref{lem:patch2neigbor}, one exploits that all interior knot multiplicities are smaller or equal than the corresponding polynomial degree $p_{i,m}$. 
Actually, this is the only place, where we need this assumption, since we do not require continuous ansatz functions to discretize $H^{-1/2}(\Gamma)^D$.
However, this lemma is of course essential as it implies for example local quasi-uniformity \eqref{def:admissible2} for admissible meshes. 
If one drops the additional assumption on the knot multiplicities and allows multiplicities up to $p_{i,m}+1$ as well as lowest-order polynomial degrees $p_{i,m}=0$, one could define the neighbors of an element $\widehat T$ in an arbitrary hierarchical mesh $\widehat\TT_{\coarse,m}$ differently as
\begin{align}
\Nu_{\coarse,m}(\widehat T):=\set{\widehat T'\in\widehat\TT_{\coarse,m}}{\big(\exists\widehat\beta\in\widehat\BB_{\coarse,m}\quad \widehat T,\widehat T'\subseteq\supp(\widehat\beta)\big)\vee \big(\widehat T\cap\widehat T'\neq \emptyset\big)}
\end{align}
and adapt the definition of admissibility.
Then, Lemma~\ref{lem:patch2neigbor} and Proposition~\ref{prop:bounded number} remain valid.
Moreover, newly inserted knots can also have multiplicity $p_{i,m}+1$, i.e., the choice $q_{i,m}=p_{i,m}+1$ in Remark~\ref{eq:different uni} is possible.
\end{remark}


\subsection{(Admissible) hierarchical  meshes and splines on the boundary}\label{subsec:physical hsplines bem}
In order to transform the definitions from the parameter domain $\widehat\Gamma_m$ to the boundary part $\Gamma_m$, we use the parametrizations from Section~\ref{subsec:gamma bem}.  
All previous definitions can now also be made on each part $\Gamma_m$, just by pulling them from the parameter domain via the bi-Lipschitz mapping $\gamma_m$.
For these definitions, we drop the symbol~$\widehat\cdot$.
 If $\widehat\TT_{\coarse,m}$ is a hierarchical mesh in the parameter domain $\widehat\Gamma_m$,  we  define the corresponding mesh on $\Gamma_m$ as  $\TT_{\coarse,m}:=\set{\gamma_m(\widehat T)}{\widehat T\in\widehat\TT_{\coarse,m}}$.
In particular, we have that $\TT_{0,m}=\set{\gamma_m(\widehat T)}{\widehat T\in\widehat\TT_{0,m}}$.
Moreover, let  $\T_m:=\set{\TT_{\coarse,m}}{\widehat\TT_{\coarse,m}\in\widehat\T_m}$ denote the  set of  all admissible meshes on $\Gamma_m$, where $\widehat\T_m$ is the set of all admissible hierarchical meshes on $\widehat\Gamma_m$. 
For an arbitrary hierarchical mesh $\TT_{\coarse,m}$ on $\Gamma_m$, 
we introduce the corresponding hierarchical spline space 
\begin{align}
\XX_{\coarse,m}:=\set{ \widehat\Psi_{\coarse,m}\circ\gamma_m^{-1}}{ \widehat\Psi_{\coarse,m}\in\widehat\XX_{\coarse,m}}\subset C^0(\Gamma_m)^D\subset L^2(\Gamma_m)^D\subset H^{-1/2}(\Gamma_m)^D
\end{align}

Finally, it remains to define hierarchical meshes and splines on $\Gamma$ itself.
This can be done by gluing the previous definitions for the single surfaces $\Gamma_m$ together. 
For all $m\in\{1,\dots,M\}$, let $\TT_{\coarse,m}$ be a hierarchical mesh on $\Gamma_m$. 
We define the corresponding hierarchical mesh on $\Gamma$ by
$\TT_\coarse:=\bigcup_{m=1}^M \TT_{\coarse,m}$. 
For $T\in\TT_{\coarse,\revision{m}}$, we denote 
$\level(T):=\level(\revision{\gamma_m^{-1}(T)})$.
We call $\TT_\coarse$ \textit{admissible} if the mesh satisfies the following two properties:
\begin{itemize}
\item
All partial meshes are admissible, i.e.,  $\TT_{\coarse,m}\in\T_m$ for all $m\in\{1,\dots,M\}$.
\item 
There are no hanging nodes on the boundary  of the surfaces $\Gamma_m$, i.e., the intersection $T\cap T'$ for  $T\in\TT_{\coarse,m},T'\in\TT_{\coarse,m'}$ with $m\neq m'$ is either empty or a common (transformed) lower-dimensional hyperrectangle.
\end{itemize}
We define the set of all admissible hierarchical meshes on $\Gamma$ as $\T$, and suppose that the initial mesh on $\Gamma$ is admissible, i.e.,  
\begin{align}\TT_0=\bigcup_{m=1}^M \TT_{0,m} \in\T.\end{align}
For an arbitrary hierarchical mesh $\TT_\coarse$ on $\Gamma$, the corresponding hierarchical splines read
\begin{align}
\XX_\coarse:=\set{\Psi_{\coarse}: \Gamma\to \C^D}{\Psi_\coarse|_{\Gamma_m}\in \XX_{\coarse,m}\text{ for all }m\in\{1,\dots,M\}}\subset L^2(\Gamma)^D\subset H^{-1/2}(\Gamma)^D.
\end{align}

\begin{remark}\label{rem:admissible bem}
{\rm (a)}
The property that there are no hanging nodes implies local quasi-uniformity at the boundaries $\partial\Gamma_m$, i.e., if $T\in\TT_{\coarse,m},T'\in\TT_{\coarse,m'}$ with $m\neq m'$ have non-empty intersection $T\cap T'\neq\emptyset$, then $|\level( T)-\level( T')|\le 1$. 
Indeed, the intersection $T\cap T'$ of $T\in\TT_{\coarse,m},T'\in\TT_{\coarse,m'}$ is either empty or a common (transformed) lower-dimensional hyperrectangle. 
Thus, if ${\rm dim}(T\cap T')\ge 1$, then $\level( T)=\level( T')$.
If ${\rm dim}(T\cap T')= 0$, i.e., if $T\cap T'$ is only a point, there exists a sequence of elements $T_1\in\TT_{\coarse,m_1},\dots, T_J\in \TT_{\coarse,m_J}$ with $T_1=T$ and $T_J=T'$ such that $m_j\neq m_{j+1}$ and ${\rm dim}(T_j\cap T_{j+1}) \ge 1$ for all $j\in \{1,\dots,J-1\}$.
The previous argumentation yields that $\level( T)=\level( T')$.

{\rm (b)}
Since, the ansatz space  only needs to be a subset of $H^{-1/2}(\Gamma)^D$, the  property that there are no hanging nodes can actually be replaced by local quasi-uniformity at the boundaries $\partial\Gamma_m$ as in {\rm (a)}, which is sufficient for the following analysis.
However, for the {\rm hyper-singular integral equation} which appears for the Neumann problem $\mathfrak{P} u=0$ in $\Omega$ with conormal derivative $\mathfrak{D}_\nu u=\phi$ on $\Gamma$ for some given $\phi\in H^{-1/2}(\Gamma)^D$ (see, e.g., \cite[pages 229--231]{mclean}), the ansatz functions must be in $H^{1/2}(\Gamma)^D$.
In this case, the natural choice for the ansatz space is $\XX_\coarse\cap C^0(\Gamma)^D$, which is even a subset of $H^1(\Gamma)^D$.
If one supposes that there  are no hanging nodes, one can define a local basis with the help of \cite[Proposition~3.1]{igafem}, which states that hierarchical B-splines on $[0,1]^{d-1}$ restricted onto any hyperface of $[0,1]^{d-1}$ are hierarchical B-splines on $[0,1]^{d-2}$.
The knowledge of such a basis is not only essential for an efficient implementation, but is also needed, e.g., for the definition of a quasi-interpolation  operator. 
\end{remark}

\subsection{Refinement  of hierarchical meshes}\label{subsec:concrete refinement bem}
In this section, we present a concrete refinement algorithm to specify the setting of Section~\ref{subsec:general refinement bem}.
We start in the parameter domain.
Recall that a hierarchical mesh $\widehat\TT_{\circ,m}$ is finer than another hierarchical mesh $\widehat\TT_{\coarse,m}$ 
 if  $\widehat\Omega_{\coarse,m}^k\subseteq \widehat\Omega_{\circ,m}^k$ for all $k\in\N_0$.
Then, $\widehat\TT_{\circ,m}$ is obtained from $\widehat\TT_{\coarse,m}$ by iterative dyadic bisections of the elements in $\widehat\TT_{\coarse,m}$.
To bisect an element $\widehat T\in\widehat\TT_{\coarse,m}$, one just has to add it to the set $\widehat\Omega_{\coarse,m}^{\level(\widehat T)+1}$; see \eqref{eq:bisection bem} below.
In this case, the corresponding spaces are nested according to  \eqref{eq:hierarchical nested pardom bem}.
To transfer this definition onto the surface $\Gamma_m$ for $m\in\{1,\dots,M\}$, we essentially just drop the symbol $\widehat\cdot$.
We say that a hierarchical mesh $\TT_{\fine,m}$ on $\Gamma_m$ is \textit{finer} than another hierarchical mesh $\TT_{\coarse,m}$ on $\Gamma_m$, if the corresponding meshes in the parameter domain satisfy this relation, i.e., if $\widehat\TT_{\fine,m}$ is finer than  $\widehat\TT_{\coarse,m}$.
In this case, it holds that
\begin{align}\label{eq:hierarchical nested pydom bem}
\XX_{\coarse,m}\subseteq\XX_{\fine,m}.
\end{align}
Finally, we call a hierarchical mesh $\TT_\fine$ on $\Gamma$ \textit{finer} than another  hierarchical mesh $\TT_\coarse$ on $\Gamma$, if the corresponding partial meshes satisfy this relation, i.e., if $\TT_{\fine,m}$ is finer than $\TT_{\coarse,m}$ for all $m\in\{1,\dots,M\}$.
In this case, it holds that
\begin{align}\label{eq:nested bem}
\XX_{\coarse}\subseteq\XX_{\fine}.
\end{align}

Let $ \TT_\coarse$ be a hierarchical mesh and $ T\in\TT_\coarse$, i.e., $T\in\TT_{\coarse,m}$ for some $m\in\{1,\dots,M\}$.
Let $\widehat T=\gamma_m^{-1}(T)$ be the corresponding element in the parameter domain.
We define the sets of \textit{neighbors}
\begin{align}
\Nu_{\coarse,m}(T):=\set{\gamma_m(\widehat T')}{\widehat T'\in \Nu_{\coarse,m}(\widehat T)}, 
\end{align}
where 
 $\Nu_{\coarse,m}(\widehat T)=\set{\widehat T'\in\widehat\TT_{\coarse,m}}{\exists\widehat\beta\in\widehat\BB_{\coarse,m}\quad\widehat T,\widehat T'\subseteq\supp(\widehat\beta)}$ is the set from \eqref{eq:neigbors}, and
\begin{align}\label{eq:neighbors bem}
\Nu_\coarse(T):=\Nu_{\coarse,m}(T)\cup\bigcup_{m'\neq m}\set{T'\in\TT_{\coarse,m}}{T\cap T'\neq \emptyset}.
\end{align}
Further, we 
 define the sets of \textit{bad neighbors}
 \begin{align}
\Nu_{\coarse,m}^{\rm bad}(T):=\set{\gamma_m(\widehat T')}{\widehat T'\in \Nu^{\rm bad}_{\coarse,m}(\widehat T)}, 
\end{align}
where $\Nu_{\coarse,m}^{\rm bad}(\widehat T):=\set{\widehat T'\in\Nu_{\coarse,m}(\widehat T)}{\level(\widehat T')=\level(\widehat T)-1}$, 
and
\begin{align}\label{eq:bad neighbors bem}
\begin{split}
\Nu^{\rm bad}_\coarse( T)&:=\Nu_{\coarse,m}^{\rm bad}(T)\cup \bigcup_{m'\neq m} \set{T'\in\TT_{\coarse,m'}}{{\rm dim}(T\cap T')>0}
\end{split}
\end{align}
 \begin{algorithm}\label{alg:refinement bem}
\textbf{Input:} Hierarchical mesh $\TT_\coarse$ , marked elements $\MM_\coarse=:\MM_\coarse^{(0)}\subseteq\TT_\coarse$.
\begin{enumerate}[\rm (i)]
\item \label{item:refinement 1 bem} Iterate the following steps {\rm (a)--(b)} for $i=0,1,2,\dots,$ until $\UU_\coarse^{(i)}=\emptyset$:
\begin{itemize}
\item[\rm (a)]Define $\UU_\coarse^{(i)}:=\bigcup_{ T\in\MM_\coarse^{(i)}}\set{ T'\in\TT_\coarse\setminus\MM_\coarse^{(i)}}{ T'\in\Nu^{\rm bad}_\coarse(T)}$.
\item[\rm(b)] 
Define  $\MM_\coarse^{(i+1)}:=\MM_\coarse^{(i)}\cup\UU_\coarse^{(i)}$.
\end{itemize}
\item
Dyadically bisect all $ T\in \MM_\coarse^{(i)}$ in the parameter domain by adding the corresponding  $\widehat T\in\widehat\TT_{\coarse,m}$ to the set $\widehat \Omega_{\coarse,m}^{\level(\widehat T)+1}$ and obtain a finer hierarchical mesh $\TT_\circ$, where for all $m\in\{1,\dots,M\}$ and  all $k\in\N$
\begin{align}\label{eq:bisection bem}
\widehat\Omega_{\circ,m}^k=\widehat\Omega_{\coarse,m}^k\cup\bigcup\set{\widehat T\in\widehat\TT_{\coarse,m}}{\gamma_m(\widehat T)\in \MM_{\coarse}^{(i)}\wedge\level(\widehat T)=k-1}.
\end{align}
\end{enumerate}
\textbf{Output:} Refined mesh $\TT_\fine=\refine(\TT_\coarse,\MM_\coarse)$.
\end{algorithm}
Clearly, $\refine(\TT_\coarse,\MM_\coarse)$ is finer than $\TT_\coarse$. 
For any hierarchical mesh $\TT_\coarse$ on $\Gamma$, we define $\refine(\TT_\coarse)$ as the set of all hierarchical meshes $\TT_\circ$ on $\Gamma$ such that there exist hierarchical meshes $\TT_{(0)},\dots,\TT_{(J)}$ and marked elements $\MM_{(0)},\dots,\MM_{(J-1)}$ with $\TT_\circ$ $=\TT_{(J)}$ $=\refine(\TT_{(J-1)},\MM_{(J-1)}),\dots,\TT_{(1)}$ $=\refine(\TT_{(0)},\MM_{(0)})$, and $\TT_{(0)}=\TT_\coarse$. 
Note that $\refine(\TT_\coarse,\emptyset)=\TT_\coarse$, which is why $\TT_\coarse\in\refine(\TT_\coarse)$. 
The following  proposition characterizes the set $\refine(\TT_\coarse)$.
In particular, it shows that $\refine(\TT_0)=\T$.
The proof works as for IGAFEM in \cite[Proposition~5.1]{igafem}, where the assertion is stated for $\Gamma=\Gamma_1\subset\R^{d-1}$; see \cite[Proposition~5.4.3]{diss} for further details. 

\begin{proposition}\label{prop:refineT subset T bem}
If $\TT_\coarse\in\T$, then $\refine(\TT_\coarse)$ coincides with the set of all admissible hierarchical meshes $\TT_\circ$ that are finer than $\TT_\coarse$.
\hfill $\square$
\end{proposition}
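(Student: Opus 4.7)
The plan is to prove the two inclusions separately. Throughout, I would work in the parameter domains $\widehat\Gamma_m$ using the characterization~\eqref{eq:parameter mesh}, so that a hierarchical mesh is uniquely determined by the nested sequence $(\widehat\Omega^k_{\bullet,m})_{k\in\N_0}$; then $\TT_\fine$ is finer than $\TT_\coarse$ precisely when $\widehat\Omega^k_{\coarse,m}\subseteq\widehat\Omega^k_{\fine,m}$ for all $k\in\N_0$ and all $m$.

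\textbf{Step 1 (the inclusion $\refine(\TT_\coarse)\subseteq\T\cap\{$finer than $\TT_\coarse\}$).} It suffices to show that one call to Algorithm~\ref{alg:refinement bem} applied to $\TT_\coarse\in\T$ with any $\MM_\coarse\subseteq\TT_\coarse$ produces a mesh $\TT_\fine$ that is finer than $\TT_\coarse$ and again admissible; the general statement then follows by induction on the number of refinement steps. The finer property is immediate from \eqref{eq:bisection bem}. For termination of the closure loop in step (i), I would observe that at each iteration only elements of $\TT_\coarse$ are added to $\MM^{(i)}_\coarse$ and that $\MM^{(i)}_\coarse\subseteq\MM^{(i+1)}_\coarse$, so the loop stops after at most $\#\TT_\coarse$ iterations. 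For admissibility, note that the only way admissibility~\eqref{def:admissible} (and its inter-patch version from Remark~\ref{rem:admissible bem}(a)) could fail in $\TT_\fine$ is if some element $T\in\TT_\coarse\setminus\MM^{(i)}_\coarse$ were a bad neighbor of a bisected element, but step (i)(a) adds precisely such elements to $\UU^{(i)}_\coarse$ and hence to $\MM^{(i+1)}_\coarse$; termination of the loop therefore certifies that no such violation remains after the simultaneous bisection in step (ii).

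\textbf{Step 2 (the inclusion $\T\cap\{$finer than $\TT_\coarse\}\subseteq\refine(\TT_\coarse)$).} Given $\TT_\star\in\T$ finer than $\TT_\coarse$, I would construct inductively a finite sequence $\TT_\coarse=\TT_{(0)},\TT_{(1)},\dots,\TT_{(J)}=\TT_\star$ with $\TT_{(j+1)}=\refine(\TT_{(j)},\MM_{(j)})$. Define the mismatch functional
\begin{align*}
\Delta(\TT_{(j)},\TT_\star):=\sum_{m=1}^M\sum_{k\in\N}|\widehat\Omega^k_{\star,m}\setminus\widehat\Omega^k_{(j),m}|.
\end{align*}
At step $j$, I would mark the set $\MM_{(j)}$ of all $T\in\TT_{(j)}$ with $T\in\TT_{(j),m}$ whose parameter-domain preimage $\widehat T$ satisfies $\widehat T\subseteq\widehat\Omega^{\level(T)+1}_{\star,m}$, i.e., exactly those elements that are strictly refined further in $\TT_\star$. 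If $\MM_{(j)}=\emptyset$, then $\widehat\Omega^k_{(j),m}=\widehat\Omega^k_{\star,m}$ for all $k,m$ and we are done. Otherwise $\Delta(\TT_{(j+1)},\TT_\star)<\Delta(\TT_{(j)},\TT_\star)$, so the procedure terminates after finitely many steps.

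The main obstacle is to show that the closure loop~(i) inside $\refine(\TT_{(j)},\MM_{(j)})$ never adds an element that is \emph{not} already further refined in $\TT_\star$; otherwise we would overshoot $\TT_\star$ and the inclusion $\TT_{(j+1),m}\subseteq$ \{meshes still coarser than or equal to $\TT_{\star,m}\}$ would fail. For this I would argue by induction over the iteration counter $i$ of step~(i) and use admissibility of $\TT_\star$ as follows: if $T\in\MM^{(i)}_{(j)}$ corresponds to some $\widehat T$ with $\widehat T\subseteq\widehat\Omega^{\level(T)+1}_{\star,m}$, and $T'\in\Nu^{\rm bad}_{(j)}(T)$ with $\widehat T'\in\widehat\TT_{(j),m}$, then $\level(\widehat T')=\level(\widehat T)-1$, but in $\TT_\star$ the element $T$ has been bisected, so the neighborhood relation in $\TT_\star$ together with admissibility~\eqref{def:admissible} of $\TT_\star$ (and its inter-patch variant from Remark~\ref{rem:admissible bem}(a) for the second part of~\eqref{eq:bad neighbors bem}) forces $\widehat T'$ to also be bisected in $\TT_\star$. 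Hence $T'$ qualifies for marking, which closes the induction. With this key property, $\TT_{(j+1)}$ remains coarser than or equal to $\TT_\star$ in each component, strictly so by construction, and Step~2 is complete. Combining Steps~1 and~2 yields the equality, and in particular $\refine(\TT_0)=\T$.
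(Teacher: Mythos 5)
Your proof has the right two-inclusion architecture, and this is indeed the same overall strategy used in the reference the paper cites for this result (\cite[Proposition~5.1]{igafem}, see also \cite[Proposition~5.4.3]{diss}): show that one call to $\refine$ starting from an admissible mesh produces an admissible finer mesh, and then show that any admissible finer mesh $\TT_\star$ is reached by iteratively marking all elements of the current mesh that are refined in $\TT_\star$, with the crux being that the closure loop never marks an element that is not also refined in $\TT_\star$.

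However, both Steps~1 and~2 assert the key technical facts rather than prove them, and the gap lies precisely where the difficulty of the proposition sits: the neighbor relation $\Nu_{\bullet,m}$ and hence the bad-neighbor relation $\Nu^{\rm bad}_{\bullet,m}$ are defined through the hierarchical B-spline basis $\widehat\BB_{\bullet,m}$ and therefore \emph{change} under refinement. In Step~1, the claim that admissibility can only fail via an unmarked bad neighbor of a marked element is not immediate, because the children of marked elements may form new B-spline-induced neighbor pairs that did not exist in $\TT_\coarse$. In Step~2, the statement that ``the neighborhood relation in $\TT_\star$ \dots\ forces $\widehat T'$ to also be bisected'' hides the heart of the argument: one must show either that the B-spline $\widehat\beta\in\widehat\BB_{(j),m}$ connecting $\widehat T$ and $\widehat T'$ survives into $\widehat\BB_{\star,m}$ (then a descendant of $T$ and $T'$ are $\TT_\star$-neighbors of level difference $\ge 2$, contradicting admissibility of $\TT_\star$), or that $\widehat\beta$ is dropped from $\widehat\BB_{\star,m}$, i.e., $\supp(\widehat\beta)\subseteq\widehat\Omega^{\level(\widehat\beta)+1}_{\star,m}$, whence $\widehat T'\subseteq\supp(\widehat\beta)$ gives $\widehat T'\subseteq\widehat\Omega^{\level(T')+1}_{\star,m}$. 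The second branch only works because in an admissible mesh every element in $\supp(\widehat\beta)$ has level in $\{\level(\widehat\beta),\level(\widehat\beta)+1\}$, which forces $\level(\widehat\beta)=\level(T')$ — a fact you neither state nor use. Without spelling out this case distinction (and the corresponding one for the cross-patch part of~\eqref{eq:bad neighbors bem} via Remark~\ref{rem:admissible bem}), your Step~2 is an assertion of the conclusion rather than a proof, and a reader cannot check that the marking loop does not overshoot.
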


\subsection{Error estimator}\label{subsec:estimator bem}
Let $\TT_\coarse\in\T$.
Due to the regularity assumption $f\in H^1(\Gamma)^D$, the mapping property \eqref{eq:single layer operator}, and $\XX_\coarse\subset L^2(\Gamma)^D$, the residual satisfies that $f-\mathfrak{V}\Psi_\coarse\in H^1(\Gamma)^D$ for all $\Psi_\coarse\in\XX_\coarse$.
This allows to employ the  weighted-residual {\sl a~posteriori} error estimator
\begin{subequations}\label{eq:eta bem}
\begin{align}
 \eta_\coarse := \eta_\coarse(\TT_\coarse)
 \quad\text{with}\quad 
 \eta_\coarse(\SS)^2:=\sum_{T\in\SS} \eta_\coarse(T)^2
 \text{ for all }\SS\subseteq\TT_\coarse,
\end{align}
where, for all $T\in\TT_\coarse$, the local refinement indicators read
\begin{align}
\eta_\coarse(T):=\revision{|T|^{\frac{1}{2(d-1)}}} \seminorm{f-\mathfrak{V}\Phi_\coarse}{H^1(T)}.
\end{align}
\end{subequations}
This estimator goes back to the works \cite{cs96,c97}, where reliability \eqref{eq:reliable bem} is proved for standard 2D BEM with piecewise polynomials on polygonal geometries, while the corresponding result for 3D BEM is first found in \cite{cms}.

\subsection{Adaptive algorithm}
We consider the following concrete realization of the abstract algorithm from, e.g.,  \cite[Algorithm~2.2]{axioms}.
\begin{algorithm}
\label{alg:bem algorithm}
\textbf{Input:} 
\hspace{-1.2mm}D\"orfler parameter $\theta\in(0,1]$ and marking constant $\const{min}\in[1,\infty]$.\\
\textbf{Loop:} For each $\ell=0,1,2,\dots$, iterate the following steps:
\begin{itemize}
\item[\rm(i)] Compute Galerkin approximation $\Phi_\ell\in\XX_\ell$.
\item[\rm(ii)] Compute refinement indicators $\eta_\ell({T})$
for all elements ${T}\in\TT_\ell$.
\item[\rm(iii)] Determine a set of marked elements $\MM_\ell\subseteq\TT_\ell$ which has up to the multiplicative constant $\const{min}$  minimal cardinality, such that the following D\"orfler marking is satisfied
\begin{align}
 \theta\,\eta_\ell^2 \le \eta_\ell(\MM_\ell)^2.
 \end{align}
\item[\rm(iv)] Generate refined mesh $\TT_{\ell+1}:=\refine(\TT_\ell,\MM_\ell)$. 
\end{itemize}
\textbf{Output:} Refined meshes $\TT_\ell$ and corresponding Galerkin approximations $\Phi_\ell$ with error estimators $\eta_\ell$ for all $\ell \in \N_0$.
\end{algorithm}

\subsection{Optimal convergence}
We set
\begin{align}
 \T(N):=\set{\TT_\coarse\in\T}{\#\TT_\coarse-\#\TT_0\le N}
 \quad\text{for all }N\in\N_0
\end{align}
and, for all $s>0$,
\begin{align}
\norm{\phi}{\mathbb{A}_s^{\rm est}}
 := \sup_{N\in\N_0}\min_{\TT_\coarse\in\T(N)}(N+1)^s\,\eta_\coarse\in[0,\infty].
\end{align}
We say that the solution $\phi\in H^{-1/2}(\Gamma)^D$ lies in the \textit{approximation class $s$ with respect to the estimator} if
\begin{align}
\norm{\phi}{\mathbb{A}_s^{\rm est}} <\infty.
\end{align}
By definition, $\norm{\phi}{\mathbb{A}^{\rm est}_s}<\infty$  implies that the error estimator $\eta_\coarse$
on the optimal meshes $\TT_\coarse$ decays at least with rate $\OO\big((\#\TT_\coarse)^{-s}\big)$. The following main theorem \revision{of our work} states that each possible rate $s>0$ will indeed be realized by Algorithm~\ref{alg:bem algorithm}.
\revision{The proof is  given in  Section~\ref{sec:abstract setting bem}.
There, we will verify certain abstract properties to employ our recent abstract counterpart \cite[Theorem~3.4]{gp20} of Theorem~\ref{thm:main bem}, which itself builds on the so-called \textit{axioms of adaptivity} from \cite{axioms}.}
Such an optimality result was first proved in \cite{fkmp} for the Laplace operator $\mathfrak{P}=-\Delta$ on a polyhedral domain $\Omega$.
As ansatz space, \cite{fkmp} considered piecewise constants on shape-regular triangulations.
\cite{part1} in combination with \cite{invest} extends the assertion to piecewise polynomials on shape-regular curvilinear triangulations  of some piecewise smooth boundary $\Gamma$.
Independently, \cite{gantumur} proved the same result for globally smooth $\Gamma$ and arbitrary symmetric and elliptic boundary integral operators.

\begin{theorem}\label{thm:main bem}
Let $(\TT_\ell)_{\ell\in\N_0}$ be the sequence of meshes generated by Algorithm~\ref{alg:bem algorithm}.
Then, there hold the following statements {\rm(i)--(iii)}:
\begin{enumerate}[\rm (i)]
\item\label{item:qabstract reliable bem}
\revision{The} residual error estimator \revision{\eqref{eq:eta bem}}satisfies reliability, i.e., there exists a constant $\const{rel}>0$ such that
\begin{align}\label{eq:reliable bem}
 \norm{\phi-\Phi_\coarse}{H^{-1/2}(\Gamma)}\le \const{rel}\eta_\coarse\quad\text{for all }\TT_\coarse\in\T.
\end{align}
\item\label{item:qabstract linear convergence bem}
\revision{For} arbitrary $0<\theta\le1$ and $\const{min}\in[1,\infty]$, the   estimator converges linearly, i.e., there exist constants $0<\ro{lin}<1$ and $\const{lin}\ge1$ such that
\begin{align}\label{eq:linear bem}
\eta_{\ell+j}^2\le \const{lin}\ro{lin}^j\eta_\ell^2\quad\text{for all }j,\ell\in\N_0.
\end{align}
\item\label{item:qabstract optimal convergence bem}
\revision{There} exists a constant $0<\theta_{\rm opt}\le1$ such that for all $0<\theta<\theta_{\rm opt}$ and $\const{min}\in[1,\infty)$, the estimator converges at optimal rate, i.e., for all $s>0$, there exist constants $c_{\rm opt},\const{opt}>0$ such that
\begin{align}\label{eq:optimal bem}
 c_{\rm opt}\norm{\phi}{\mathbb{A}_s^{\rm est}}
 \le \sup_{\ell\in\N_0}{(\# \TT_\ell-\#\TT_0+1)^{s}}\,{\eta_\ell}
 \le \const{opt}\norm{\phi}{\mathbb{A}_s^{\rm est}}.
\end{align}
\end{enumerate}
\noindent All involved constants $\const{rel},\const{lin},q_{\rm lin},\theta_{\rm opt}$, and $\const{opt}$ depend only on the dimensions $d,D$, the coefficients of the differential operator $\mathfrak{P}$,  \revision{the boundary} $\Gamma$, {\color{black} the (fixed) number $M$ of boundary parts $\Gamma_m$, the parametrizations $\gamma_m$ and $\gamma_z$, the initial meshes 
 $\widehat \TT_{0,m}$, and the polynomial orders $(p_{1,m},\dots,p_{d-1,m})$ for $m\in\{1,\dots,M\}$ and $z\in\NN_\gamma$ \revision{(see~\eqref{eq:Ngamma})}}, 
 while $\const{lin},\ro{lin}$ depend additionally on $\theta$ and the sequence $(\Phi_\ell)_{\ell\in\N_0}$, and $\const{opt}$ depends furthermore on $\const{min}$ and $s>0$.
The constant $c_{\rm opt}$ depends only on \revision{$d$}, $\#\TT_0$,  $s$, and if there exists $\ell_0$ with $\eta_{\ell_0}=0$, then also on $\ell_0$ and $\eta_0$.
\end{theorem}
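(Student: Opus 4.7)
The strategy is to feed Theorem~\ref{thm:main bem} into the abstract optimality result of Part~1, namely \cite[Theorem~3.4]{gp20}, which is itself tailored to weakly-singular integral equations within the axioms-of-adaptivity framework of \cite{axioms}. The task therefore splits into verifying, in the present isogeometric hierarchical-spline setting, a group of mesh-refinement properties of the routine $\refine$ and a group of estimator/Galerkin properties; once all abstract assumptions are checked, the three statements (i)--(iii) follow directly from \cite[Theorem~3.4]{gp20}.

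\textbf{Mesh-refinement side.} I would work patch by patch in the parameter domains $\widehat\Gamma_m=[0,1]^{d-1}$, where $\refine$ from Algorithm~\ref{alg:refinement bem} is dyadic bisection plus an admissibility closure. The key properties to establish are: (i) the closure estimate $\#\TT_\ell-\#\TT_0\lesssim\sum_{j<\ell}\#\MM_j$; (ii) the overlay estimate that any two admissible meshes admit a common admissible refinement of cardinality at most $\#\TT_\coarse+\#\TT_\star-\#\TT_0$; and (iii) a child-size estimate $|T'|\simeq|T|/2$ with a uniformly bounded number of children. These are the analogs of the IGAFEM properties already verified in \cite{igafem}, relying on Proposition~\ref{prop:refineT subset T bem} and the neighbor structure of Proposition~\ref{prop:bounded number}. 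The transport from $\widehat\Gamma_m$ to $\Gamma$ uses the bi-Lipschitz parametrizations $\gamma_m$ together with the cross-patch compatibility \eqref{eq:gammacomp}, which makes admissibility propagate consistently across the interfaces $\Gamma_m\cap\Gamma_{m'}$.

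\textbf{Estimator side.} The central ingredient is a Scott--Zhang-type quasi-interpolation $J_\coarse:H^{1/2}(\Gamma)^D\to\XX_\coarse$ that is $L^2$- and $H^{1/2}$-stable, locally a projection, and has first-order $L^2$-approximation scaled as $|T|^{1/(2(d-1))}$. Given $J_\coarse$, reliability \eqref{eq:reliable bem} follows by combining the Galerkin orthogonality \eqref{eq:galerkin bem} with duality and approximation against $J_\coarse$; stability and reduction of $\eta_\coarse$ are obtained from the fractional-order inverse estimates for the single-layer operator in \cite{invest} together with the local quasi-uniformity \eqref{def:admissible2} of admissible hierarchical meshes; discrete reliability against any $\TT_\fine\in\refine(\TT_\coarse)$ comes from $J_\fine$ and the locality of hierarchical B-splines (Proposition~\ref{prop:bounded number} and Remark~\ref{rem:connected}(a)), ensuring that the bound involves $\eta_\coarse$ only on refined elements and a bounded neighborhood; finally, the general quasi-orthogonality required for the possibly non-self-adjoint $\mathfrak V$ is obtained from the C\'ea estimate \eqref{eq:cea bem} and a perturbation of the symmetric part, exactly as in \cite{gp20}.

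\textbf{Main obstacle.} The heart of the matter is the construction of $J_\coarse$ on hierarchical splines glued across patches: its supports must remain uniformly local across all hierarchy levels (for which the admissibility of Section~\ref{subsec:physical hsplines bem} and Proposition~\ref{prop:bounded number} are essential), and it must be compatible with the parametrization changes at interfaces $\Gamma_m\cap\Gamma_{m'}$, which is where \eqref{eq:gammacomp} enters. Once $J_\coarse$ is in hand, the inverse estimates of \cite{invest} transfer to the hierarchical setting by standard pull-back/push-forward arguments on each $\Gamma_m$, and \cite[Theorem~3.4]{gp20} delivers reliability, linear convergence, and optimal rates in one step. The rational version in Remark~\ref{rem:rational main bem} requires only minor modifications, as the NURBS weight is smooth and bounded and hence does not interfere with any of the scaling arguments.
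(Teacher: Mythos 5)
Your high-level plan is the right one: verify a set of abstract assumptions and then invoke \cite[Theorem~3.4]{gp20}. The mesh-refinement side (closure, overlay, bounded number and uniform shrinking of children, transported from the parameter domains via the bi-Lipschitz maps and the compatibility~\eqref{eq:gammacomp}) is essentially what the paper does in Sections~\ref{sec:sons bem}--\ref{subsec:R overlay bem}. However, on the estimator/space side your proposal diverges from the paper in ways that leave genuine gaps.

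First, the abstract result of Part~1 does not take stability, reduction, discrete reliability and quasi-orthogonality as hypotheses; it takes the elementary properties \eqref{M:patch bem}--\eqref{M:semi bem}, \eqref{R:sons bem}--\eqref{R:overlay bem}, and \eqref{S:inverse bem}--\eqref{S:stab bem} as input, and it is those that the paper verifies. You are effectively re-deriving Part~1 rather than instantiating it, and in doing so you misidentify the key ingredients. Second, the operator constructed in the paper is \emph{not} a Scott--Zhang-type quasi-interpolation $J_\coarse:H^{1/2}(\Gamma)^D\to\XX_\coarse$ with $H^{1/2}$-stability. What is actually needed is a family of operators $\mathcal J_{\coarse,\SS}:L^2(\Gamma)^D\to\set{\Psi_\coarse\in\XX_\coarse}{\Psi_\coarse|_{\bigcup(\TT_\coarse\setminus\SS)}=0}$ with local projection~\eqref{S:proj bem} and local $L^2$-stability~\eqref{S:stab bem}; these are built from dual functionals of B-splines and \emph{truncated} hierarchical B-splines (see~\eqref{eq:hatJSm}), and they serve discrete reliability, not reliability. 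Moreover, since the ansatz space $\XX_\coarse$ sits in $H^{-1/2}(\Gamma)^D$, an operator mapping $H^{1/2}(\Gamma)^D\to\XX_\coarse$ with $H^{1/2}$-stability is not a natural object here. Third, reliability of the weighted-residual estimator for weakly-singular BEM does not follow from Galerkin orthogonality plus a quasi-interpolation as in FEM; it requires the componentwise local approximation of unity~\eqref{S:unity bem} (verified via truncated hierarchical B-splines and their partition-of-unity property~\eqref{eq:truncated partition}) together with the local seminorm estimate~\eqref{M:semi bem} and the patch-centered-elements property~\eqref{M:cent bem}, none of which appear in your sketch. Finally, you attribute the needed inverse estimate to a pull-back of~\cite{invest}, but the decisive property is the discrete inverse inequality $\norm{h_\coarse^{1/2}\Psi_\coarse}{L^2(\Gamma)}\lesssim\norm{\Psi_\coarse}{H^{-1/2}(\Gamma)}$ for $\Psi_\coarse\in\XX_\coarse$, i.e.~\eqref{S:inverse bem}; this is nontrivial for hierarchical splines and is established via the abstract criterion of Proposition~\ref{prop:abs invest} together with a min--max estimate for tensor-product polynomials (Lemma~\ref{lem:poly minmax}). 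Simply transporting~\cite{invest} by push-forward on each patch does not produce this estimate.
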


\begin{remark}
If the sesquilinear form $\sprod{\mathfrak{V}\,\cdot}{\cdot}$ is Hermitian, then $\const{lin}$, $\ro{lin},$ and  $\const{opt}$ are  independent of $(\Phi_\ell)_{\ell\in\N_0}$; see \cite[Remark~4.14]{gp20}.
\end{remark}

\begin{remark}
\revision{Under} the assumption that $\norm{h_{\ell}}{L^\infty(\Omega)}\to0$ as $\ell\to\infty$, one can show  that
$\XX_\infty:=\overline{\bigcup_{\ell\in\N_0}\XX_\ell}=H^{-1/2}(\Gamma)^D$; see \cite[Remark~3.7]{gp20} for the elementary proof.
This observation allows to follow the ideas of~\cite{helmholtz} and to show that the adaptive algorithm yields \revision{optimal} convergence even if the sesquilinear form $\sprod{\mathfrak{V}\,\cdot}{\cdot}$ is only elliptic up to some compact perturbation, provided that the continuous problem is well-posed. This includes, e.g., adaptive BEM for the Helmholtz equation; see~\cite[Section~6.9]{s}. For details, the reader is referred  to~\cite{helmholtz,bbhp19}.%
\revision{We note that $\norm{h_{\ell}}{L^\infty(\Omega)}\to0$ can easily be algorithmically enforced (without influencing the optimality result) by additionally marking the largest elements in Algorithm~\ref{alg:bem algorithm}~{\rm (iii)}; see \cite[Proposition~16]{helmholtz}.}
\end{remark}

\begin{remark}\label{rem:rational main bem}
{\rm (a)}
Theorem~\ref{thm:main bem} is still valid if one replaces the ansatz space $\XX_\coarse$ by {\rm rational hierarchical splines}, i.e., by the set
\begin{align}
\XX_\coarse^{W_0}:=\Big\{{W_0^{-1}\Psi_\coarse}:\Psi_\coarse\in\XX_\coarse\Big\},
\end{align}
where $\widehat W_{0,m}:=W_0\circ\gamma_m$ is a fixed positive weight function in the initial space of hierarchical splines $\widehat\SS^{(p_{1,m},\dots,p_{d-1,m})}(\widehat\KK_{0,m},\widehat\TT_{0,m})$ for all $m\in\{1,\dots,M\}$.
With the B-spline basis $\widehat\BB_{0,m}$ on $\widehat\TT_{0,m}$, we even suppose that $\widehat W_{0,m}$ can be written as 
\begin{align}\label{eq:W0 representation}
\widehat W_{0,m}=\sum_{\widehat\beta\in\widehat\BB_{0,m}} w_{0,m,\widehat\beta}\,  \widehat \beta \quad\text{with }w_{0,m,\widehat\beta}\ge 0.
\end{align}
We will prove this generalization in Section~\ref{sec:rational main proof bem}.
In this case, the constants depend additionally on $W_0$.

{\rm (b)}
Moreover, Theorem~\ref{thm:main bem} still holds true if newly inserted knots have a multiplicity higher than one, i.e., if one uses the uniformly refined knots of Remark~\ref{eq:different uni} with $1\le q_{i,m}\le p_{i,m}$ to define (rational) hierarchical splines.
The corresponding proof works verbatim.

{\rm (c)}
Finally, if one defines for an element $\widehat T$ of a hierarchical mesh $\widehat\TT_{\coarse,m}$ its neighbours $\Nu_{\coarse,m}(\widehat T)$  as in Remark~\ref{rem:Nu for p0}, and adapts the definition of admissibility and $\refine(\cdot,\cdot)$ accordingly, one can also allow for lowest-order polynomial degrees $p_{i,m}\in\N_0$ as well as full knot multiplicities $q_{i,m}=p_{i,m}+1$.
\end{remark}



\section{Numerical experiments with hierarchical splines}\label{sec:numerics for bem1}
In this section, we empirically investigate the performance of Algorithm \ref{alg:bem algorithm} in two typical situations: In Section \ref{subsec:cube bem1}, the solution is generically  singular at the edges of $\Gamma=\partial\Omega$.
In  Section~\ref{subsec:bendcube bem1}, the solution is nearly singular at one point.
We consider the 3D Laplace operator $\mathfrak{P}:=-\Delta$.
The  corresponding fundamental solution reads
\begin{align}
G(z):=\frac{1}{4\pi}\frac{1}{|z|}\quad\text{for all }z\in \R^3\setminus\{0\}.
\end{align}
As already mentioned in Section~\ref{subsec:model problem bem}, the corresponding single-layer operator $\mathfrak{V}:H^{-1/2}(\Gamma)\to H^{1/2}(\Gamma)$ is elliptic.
For experiments in 2D, we refer to the recent work~\cite{fgkss19} for univariate hierarchical splines and to~\cite{resigabem,fgps19,gps20} for univariate standard and rational splines.

In the first example (Section~\ref{subsec:cube bem1}), we consider the exterior Laplace--Dirichlet problem
\begin{subequations}\label{eq:Laplace exterior bem}
\begin{align}
\begin{split}
-\Delta u&=0\quad\text{in }{\R^3\setminus\overline{\Omega}},\\ u&=g\quad\text{on } \Gamma,\end{split}
\end{align}
for given Dirichlet data $g\in {H}^{1/2}(\Gamma_{})$, together with the radiation condition
\begin{align}
u(x)=\mathcal{O}\Big(\frac{1}{|x|}\Big)\quad\text{as }|x|\to\infty.
\end{align}
\end{subequations}
Then, \eqref{eq:Laplace exterior bem} can be equivalently rewritten as integral equation \eqref{eq:strong}; see, e.g., \cite[Theorem~7.15 and Theorem~8.9]{mclean}. 
Indeed, the (exterior) normal derivative $\phi:=-\partial_{\nu} u$ of the  weak solution $u$ of  \eqref{eq:Laplace exterior bem} satisfies the integral equation \eqref{eq:strong} with $f:=(\mathfrak{K}-1/2)g$, i.e., 
\begin{equation}\label{eq:Symmy}
\mathfrak{V}\phi =(\mathfrak{K}-1/2) g, 
\end{equation}
where 
\begin{align}\label{eq:double layer mapping}
\mathfrak{K}: H^{1/2}(\Gamma)\to H^{1/2}(\Gamma)
\end{align}
denotes the \textit{double-layer operator}. 
According to \cite[Corollary~3.3.12 and Theorem~3.3.13]{ss}, if $\Gamma$ is piecewise smooth and if $g\in L^\infty(\Gamma)$, there holds the representation 
\begin{align}\label{eq:double-layer}
\begin{split}
\mathfrak{K}g(x)= \int_{\Gamma_{}} g(y)\partial_{\nu(y)} G(x,y) \,dy \quad\text{if }\Gamma \text{ is smooth in } x\in\Gamma \text{ and }g \text{ is continuous at }x.
\end{split}
\end{align}

In the second example (Section~\ref{subsec:bendcube bem1}), we consider the interior Laplace--Dirichlet problem
\begin{align}\label{eq:Laplace interior bem}
\begin{split}
-\Delta u&=0\quad\text{in }{\Omega},\\ u&=g\quad\text{on } \Gamma,\end{split}
\end{align}
for given Dirichlet data $g\in {H}^{1/2}(\Gamma_{})$.
Then, \eqref{eq:Laplace interior bem} can be equivalently rewritten as integral equation \eqref{eq:strong}; see, e.g., \cite[Theorem~7.6]{mclean}. 
Indeed, the normal derivative $\phi:=\partial_\nu u$ of the  weak solution $u$ of  \eqref{eq:Laplace interior bem} satisfies the integral equation \eqref{eq:strong} with $f:=(\mathfrak{K}+1/2)g$ and $\mathfrak{K}$  the double-layer operator~\eqref{eq:double layer mapping}, i.e.,
\begin{equation}\label{eq:Symmy interior}
\mathfrak{V}\phi =(\mathfrak{K}+1/2) g. 
\end{equation}

The integral representation \eqref{eq:double-layer} is satisfied for both examples.
Indeed, the surfaces $\Gamma_m$ of the boundary $\Gamma=\bigcup_{m=1}^M\Gamma_m$  are  parametrized via rational splines, i.e., for each $m\in\{1,\dots,M\}$, 
there exist polynomial orders $p_{1(\gamma,m)},p_{2(\gamma,m)}\in\N$, a   two-dimensional vector $\widehat\KK_{\gamma,m}=(\widehat\KK_{1(\gamma,m)},\widehat\KK_{2(\gamma,m)})$ of $p_{i(\gamma,m)}$-open knot vectors with multiplicity smaller or equal to $p_{i(\gamma,m)}$ for the interior knots, and a positive spline weight function $\widehat W_{\gamma,m}\in\widehat\SS^{(p_{1(\gamma,m)},p_{2(\gamma,m)})}(\widehat \KK_{\gamma,m})$ such that the parametrization $\gamma_m:\widehat \Gamma_m\to\Gamma_m$ satisfies   that
\begin{align}
\gamma_m\in \set{\widehat W_{\gamma,m}^{-1}\,\widehat S}{\widehat S\in\widehat\SS^{(p_{1(\gamma,m)},p_{2(\gamma,m)})}(\widehat\KK_{\gamma,m})^3}.
\end{align}

Based on the knots $\widehat\KK_{\gamma,m}$ for the geometry, we choose the initial knots $\widehat\KK_{0,m}$ for the discretization.  
As basis for the considered  ansatz spaces of (non-rational) hierarchical splines, we use the basis given in \eqref{eq:hierarchical basis bem}.
To (approximately) calculate the Galerkin matrix and the right-hand side vector, we proceed as in \cite[Chapter~5]{ss} where all  singular integrals are transformed via Duffy transformations and then computed with tensor Gauss quadrature.
To calculate the weighted-residual error estimator\footnote{To ease the computation, we replace $|T|^{1/2}$ in \eqref{eq:eta bem} by the equivalent term $\diam(\Gamma)\,|\widehat T|^{1/2}$.
Here, $\widehat T$ denotes the corresponding element of $T\in\TT_{\ell,m}$ in the parameter domain $\widehat\Gamma_m$.} \eqref{eq:eta bem}, we employ  formula \eqref{eq:surface gradient} for the surface gradient and use again tensor Gauss quadrature.
To this end, we approximate $\nabla ((f-\mathfrak{V}\Phi_\ell)\circ\gamma_m)$ on an element $\widehat T\in\widehat\TT_{\ell,m}$ by the gradient of the polynomial interpolation of the residual $f-\mathfrak{V}\Phi_\ell$ as in \cite[Section~7.1.5]{karkulik}.
In particular, we have to evaluate the residual  at some quadrature points which can be done (approximately) using appropriate Duffy transformations and tensor Gauss quadrature as in \cite[Sections~5.1--5.2]{diplomarbeit}.
 

To (approximately) calculate the energy error, we proceed as follows:
Let $\Phi_{\ell}\in \XX_\ell$ be the Galerkin approximation of the $\ell$-th step with  the corresponding coefficient vector $\boldsymbol{c_\ell}$.
Further, let $\boldsymbol{V_\ell}$ be the Galerkin matrix. 
With  Galerkin orthogonality \eqref{eq:galerkin bem} and the energy norm $\norm{\phi}{\mathfrak{V}}^2=\dual{\mathfrak{V}\phi}{\phi}$ obtained by Aitken's $\Delta^2$-extrapolation, we can compute the energy error as
\begin{align}\label{eq:error calc gal bem1}
\begin{split}
\norm{\phi-\Phi_\ell}{\mathfrak{V}}^2&=\norm{\phi}{\mathfrak{V}}^2-\norm{\Phi_\ell}{\mathfrak{V}}^2=\norm{\phi}{\mathfrak{V}}^2-\boldsymbol{V_\ell}\boldsymbol{c_\ell}\cdot \boldsymbol{c_\ell}.
\end{split}
\end{align}
\subsection{Solution with edge singularities on cube}
\label{subsec:cube bem1}
In the first experiment, we consider 
\begin{align}
\Omega := (0,1/10)^3.
\end{align}
Each of the six faces $\Gamma_m$ of $\Omega$ can be parametrized by non-rational splines of degree $p_{1(\gamma,m)}:=p_{2(\gamma,m)}:=1$ corresponding to the knot vectors  $\widehat\KK_{1(\gamma,m)}:=\widehat\KK_{2(\gamma,m)} := (0,0,1,1)$; see \cite[Section~6.1]{igafem}.
We choose the right-hand side $f:=1$ in \eqref{eq:strong}.
Note that the constant function $1$ satisfies the Laplace problem, which is why \eqref{eq:Symmy interior} implies that  $\mathfrak{K}1=-1/2$.
We conclude that 
\begin{align}
f=(\mathfrak{K}-1/2) g\quad\text{with }g:=-1.
\end{align}
This means that the considered integral equation stems from an exterior Laplace--Dirichlet problem~\eqref{eq:Laplace exterior bem}.
In particular, we expect singularities at the non-convex edges of $\R^3\setminus\overline{\Omega}$, i.e., at all edges of the cube $\Omega$.

\begin{figure}[h!]
\psfrag{x1}[c][c]{\fontsize{5pt}{6pt}\selectfont $x_1$}
\psfrag{x2}[c][c]{\fontsize{5pt}{6pt}\selectfont $x_2$}
\psfrag{x3}[c][c]{\fontsize{5pt}{6pt}\selectfont $x_3$}
\begin{center}
\includegraphics[width=0.24\textwidth,clip=true]{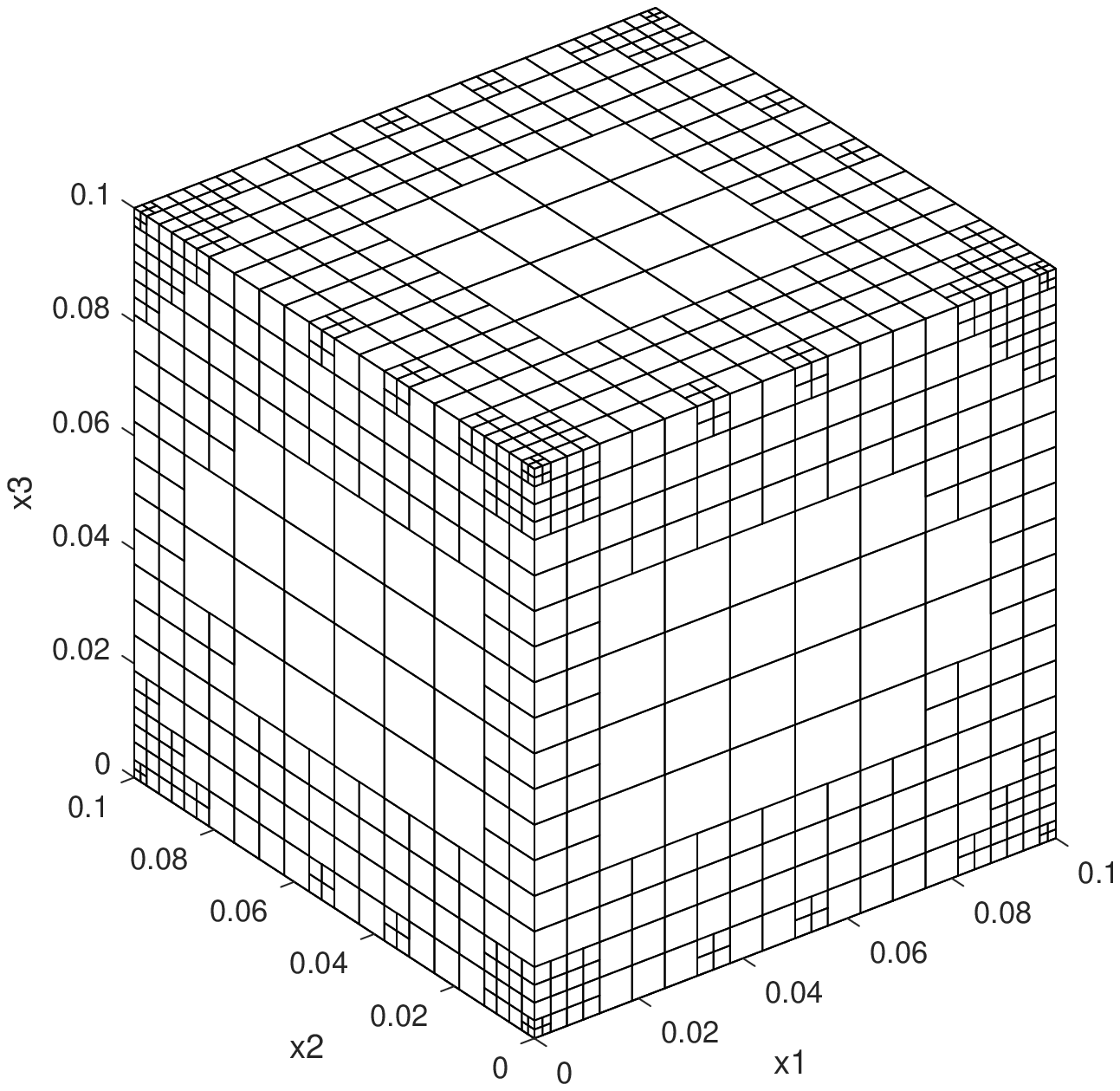}
\includegraphics[width=0.24\textwidth,clip=true]{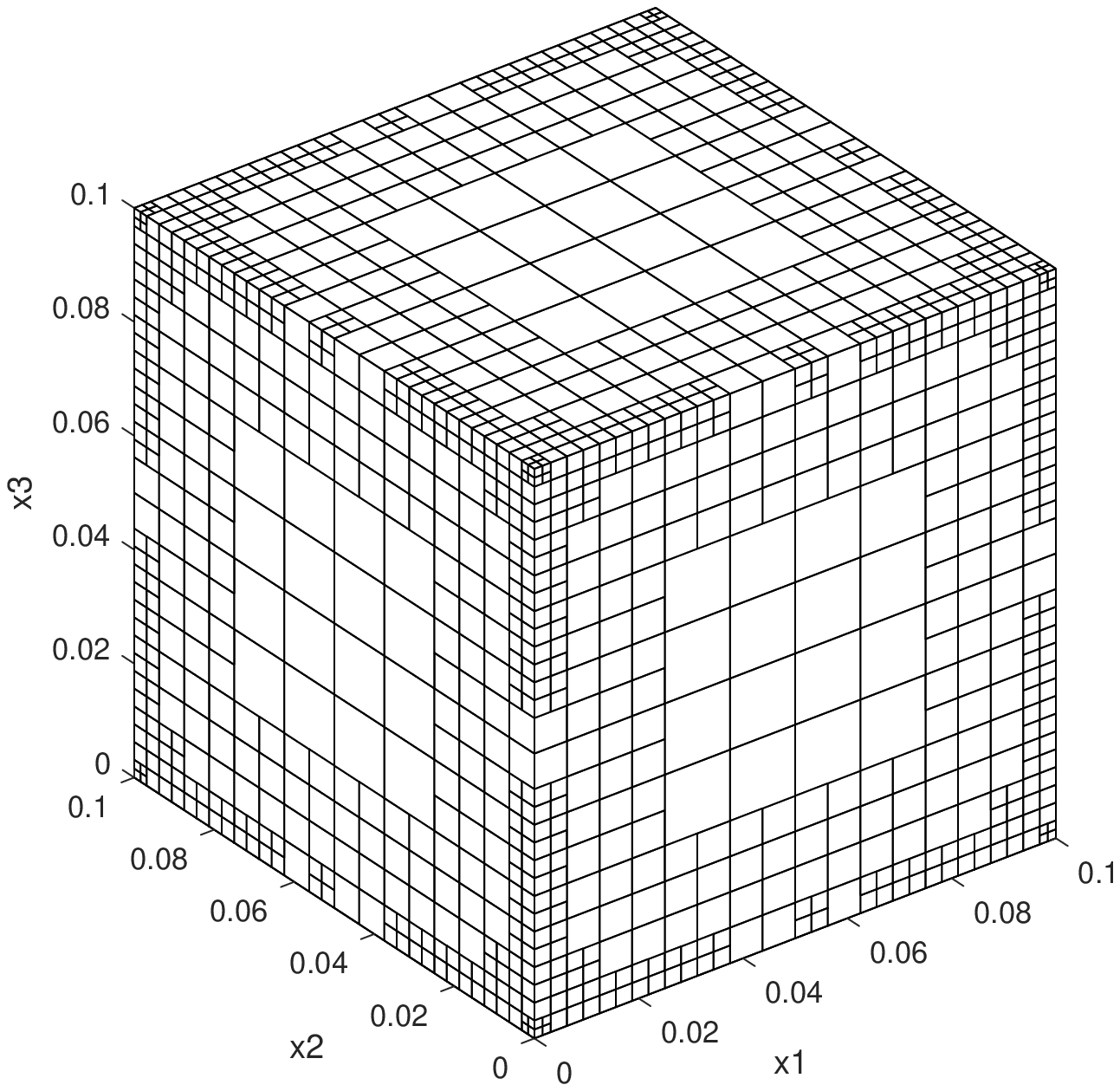}
%
%
\includegraphics[width=0.24\textwidth,clip=true]{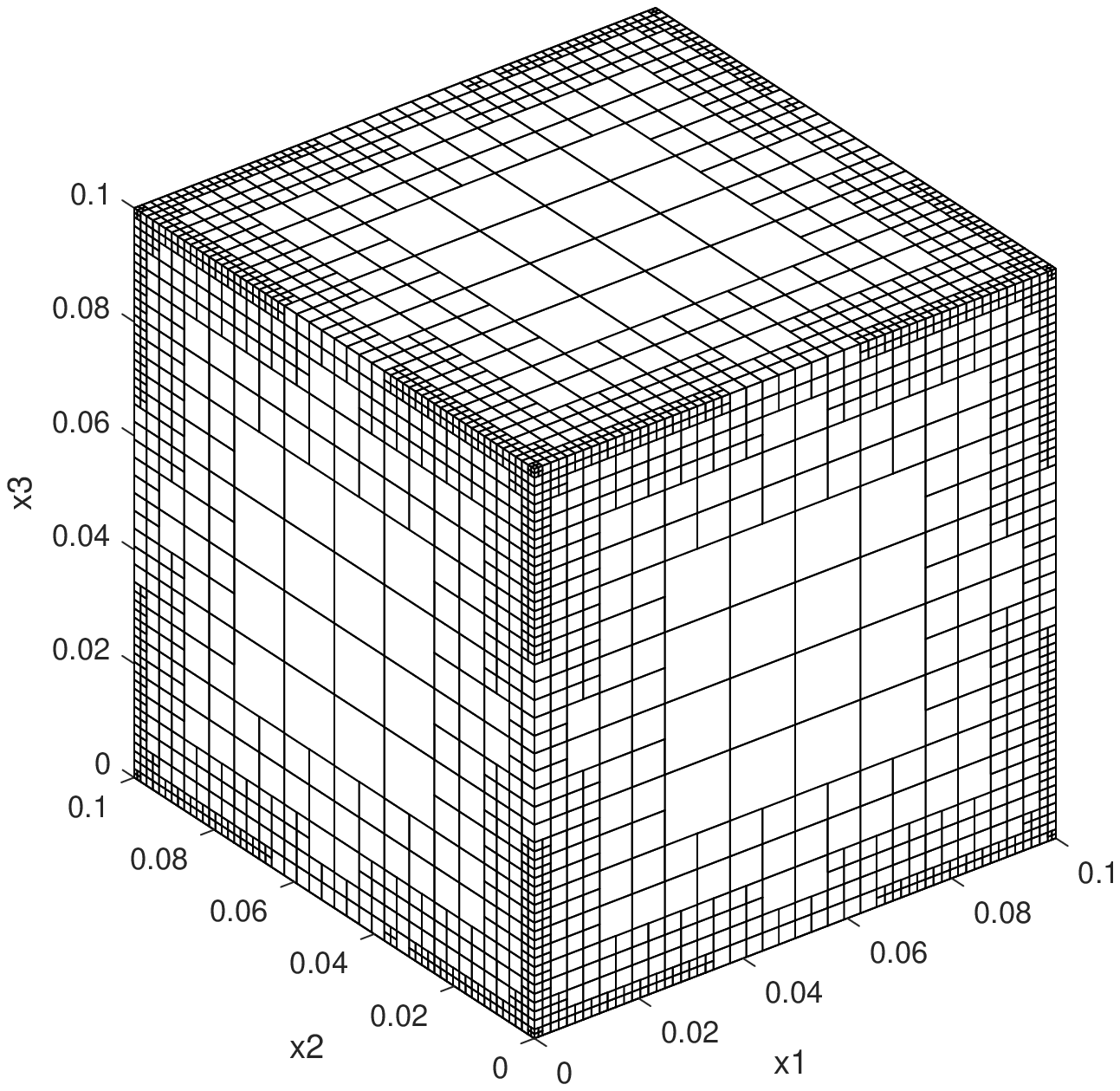}
\includegraphics[width=0.24\textwidth,clip=true]{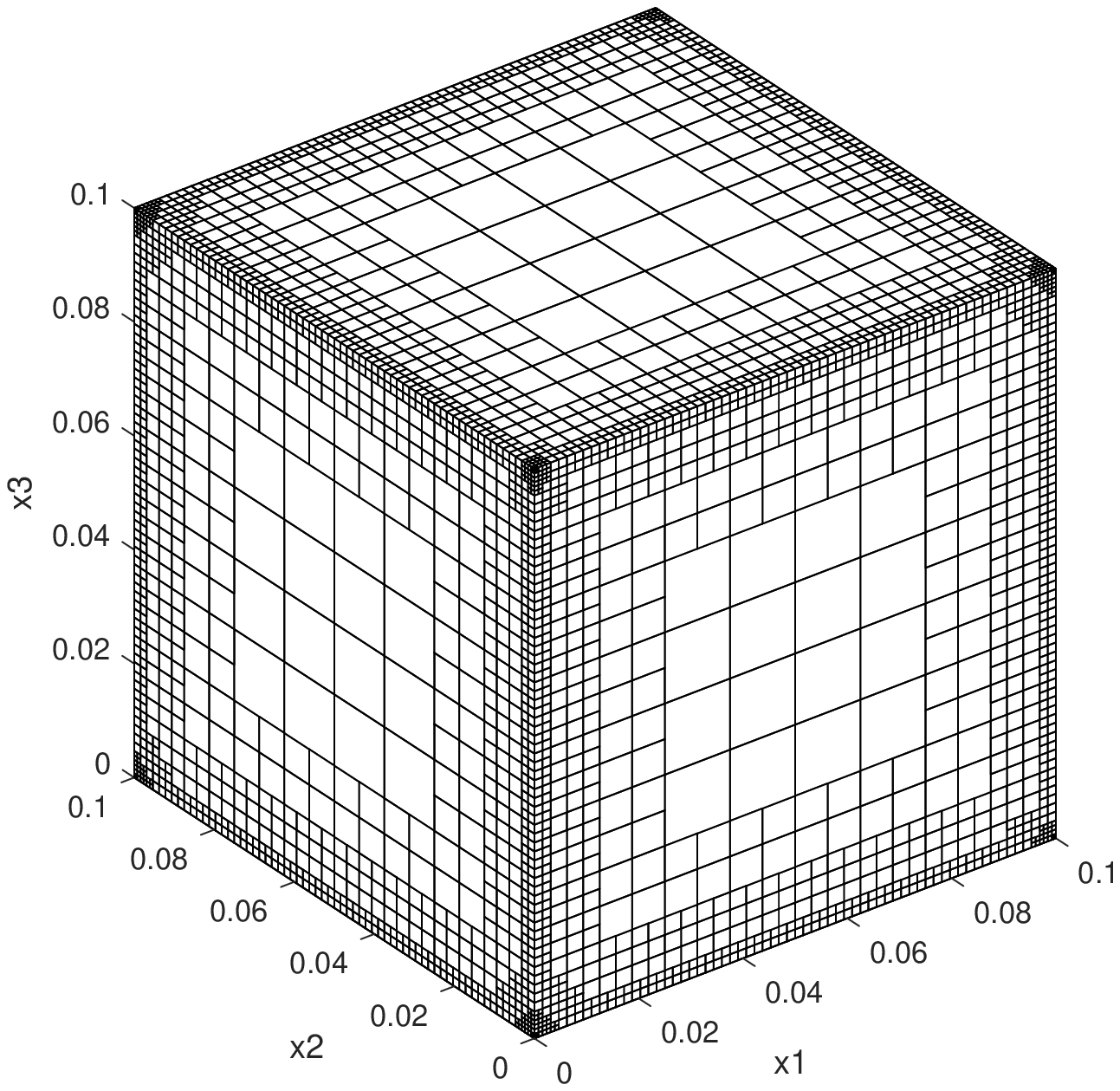}
\end{center}
\caption{Experiment with solution with edge singularities 
on cube  of Section~\ref{subsec:cube bem1}. 
Hierarchical meshes $\TT_8,\TT_{10},\TT_{11},\TT_{13}$ generated by  Algorithm~\ref{alg:bem algorithm} (with $\theta=0.5$) for hierarchical splines of degree $p=1$. 
}
\label{fig:cube_mesh}
\end{figure}

We consider polynomial degrees $p\in\{0,1,2\}$.
For the initial ansatz space with spline degree $p_{1,m}:=p_{2,m}:=p$ for all $m\in\{1,\dots 6\}$, we choose the initial knot vectors $\widehat\KK_{1(0,m)}:=\widehat\KK_{2(0,m)}:=(0,\dots 0,1,\dots,1)$ for all $m\in\{1,\dots,6\}$, where the multiplicity of $0$ and $1$ is $p+1$.
We choose the parameters of Algorithm~\ref{alg:bem algorithm} as $\theta=0.5$ and $\const{min}=1$, where we use the refinement strategy of Remark~\ref{rem:rational main bem} {\rm(c)} in the lowest-order case $p=0$.
For comparison, we also consider uniform refinement, where we mark all elements in each step, i.e., $\MM_\ell=\TT_\ell$ for all $\ell\in\N_0$. 
This leads to uniform bisection of all elements.
 In Figure~\ref{fig:cube_mesh}, one can see some adaptively generated hierarchical meshes.
 In  Figure~\ref{fig:cube_p} and Figure~\ref{fig:cube_pcomp}, 
we plot   the energy error $\norm{\phi-\Phi_\ell}{\mathfrak{V}}$ and the error estimator $\eta_\ell$ against the  number of elements $\#\TT_\ell$.  
All values are plotted in a double-logarithmic scale.  The experimental convergence rates are thus visible as the slope of the corresponding curves.
 Although we only proved reliability \eqref{eq:reliable bem} of the employed estimator, the curves for the error and the estimator are parallel in each case, which numerically indicates reliability and efficiency.
The uniform approach always leads to the suboptimal  convergence rate $\mathcal{O}((\#\TT_\ell)^{-1/3})$  due to the edge singularities.
Independently on the chosen polynomial degree $p$, the adaptive approach leads approximately to the rate $\mathcal{O}((\#\TT_\ell)^{-1/2})$.
For smooth solutions $\phi$, one would expect the rate  $\mathcal{O}((\#\TT_\ell)^{-3/4-p/2})$; see \cite[Corollary~4.1.34]{ss}.
However, according to Theorem~\ref{thm:main bem}, the achieved rate is optimal if one uses the proposed refinement strategy and the resulting hierarchical splines.
The reduced optimal convergence rate is probably due to the edge singularites.
A similar convergence behavior is also witnessed in \cite[Section~5.2]{ferraz} for the lowest-order case $p=0$.
\cite{ferraz} additionally considers anisotropic refinement which recovers the optimal convergence rate $\mathcal{O}((\#\TT_\ell)^{-3/4})$.
\revision{We mention that for graded meshes the theoretical convergence rates are explicitly known on polyhedral domains; see \cite{ps90} or \cite[Chapter~7]{gs18}.}

\begin{figure}[h!]
\psfrag{p=0, unif., est.}[l][l]{\fontsize{5pt}{6pt}\selectfont $p=0$, unif., est.}
\psfrag{p=0, unif., err.}[l][l]{\fontsize{5pt}{6pt}\selectfont $p=0$, unif., err.}
\psfrag{p=0, adap., est.}[l][l]{\fontsize{5pt}{6pt}\selectfont  $p=0$, adap., est.}
\psfrag{p=0, adap., err.}[l][l]{\fontsize{5pt}{6pt}\selectfont  $p=0$, adap., err.}
\psfrag{p=1, unif., est.}[l][l]{\fontsize{5pt}{6pt}\selectfont $p=1$, unif., est.}
\psfrag{p=1, unif., err.}[l][l]{\fontsize{5pt}{6pt}\selectfont $p=1$, unif., err.}
\psfrag{p=1, adap., est.}[l][l]{\fontsize{5pt}{6pt}\selectfont  $p=1$, adap., est.}
\psfrag{p=1, adap., err.}[l][l]{\fontsize{5pt}{6pt}\selectfont  $p=1$, adap., err.}
\psfrag{p=2, unif., est.}[l][l]{\fontsize{5pt}{6pt}\selectfont $p=2$, unif., est.}
\psfrag{p=2, unif., err.}[l][l]{\fontsize{5pt}{6pt}\selectfont $p=2$, unif., err.}
\psfrag{p=2, adap., est.}[l][l]{\fontsize{5pt}{6pt}\selectfont  $p=2$, adap., est.}
\psfrag{p=2, adap., err.}[l][l]{\fontsize{5pt}{6pt}\selectfont  $p=2$, adap., err.}
\psfrag{p=3, unif., est.}[l][l]{\fontsize{5pt}{6pt}\selectfont $p=3$, unif., est.}
\psfrag{p=3, unif., err.}[l][l]{\fontsize{5pt}{6pt}\selectfont $p=3$, unif., err.}
\psfrag{p=3, adap., est.}[l][l]{\fontsize{5pt}{6pt}\selectfont  $p=3$, adap., est.}
\psfrag{p=3, adap., err.}[l][l]{\fontsize{5pt}{6pt}\selectfont  $p=3$, adap., err.}
\psfrag{error and estimator}[c][c]{\fontsize{5pt}{6pt}\selectfont error and estimator}
\psfrag{number of elements}[c][c]{\fontsize{5pt}{6pt}\selectfont number of elements}
\psfrag{O(2.5)}[r][r]{\fontsize{5pt}{6pt}\selectfont $\mathcal{O}(N^{-5/2})$}
\psfrag{O(13)}[l][l]{\fontsize{5pt}{6pt}\selectfont $\mathcal{O}(N^{-1/3})$}
\psfrag{O(1)}[l][l]{\fontsize{5pt}{6pt}\selectfont $\mathcal{O}(N^{-1})$}
\psfrag{O(1.5)}[r][r]{\fontsize{5pt}{6pt}\selectfont $\mathcal{O}(N^{-5/2})$}
\psfrag{O(1.5)}[r][r]{\fontsize{5pt}{6pt}\selectfont $\mathcal{O}(N^{-3/2})$}
\psfrag{O(2)}[r][r]{\fontsize{5pt}{6pt}\selectfont $\mathcal{O}(N^{-2})$}
\psfrag{O(12)}[r][r]{\fontsize{5pt}{6pt}\selectfont $\mathcal{O}(N^{-1/2})$}
\psfrag{O(54)}[r][r]{\fontsize{5pt}{6pt}\selectfont $\mathcal{O}(N^{-5/4})$}
\psfrag{O(74)}[r][r]{\fontsize{5pt}{6pt}\selectfont $\mathcal{O}(N^{-7/4})$}
\psfrag{O(92)}[r][r]{\fontsize{5pt}{6pt}\selectfont $\mathcal{O}(N^{-9/2})$}
\psfrag{O(1)}[l][l]{\fontsize{5pt}{6pt}\selectfont $\mathcal{O}(N^{-1})$}
\begin{center}
\includegraphics[width=0.32\textwidth,clip=true]{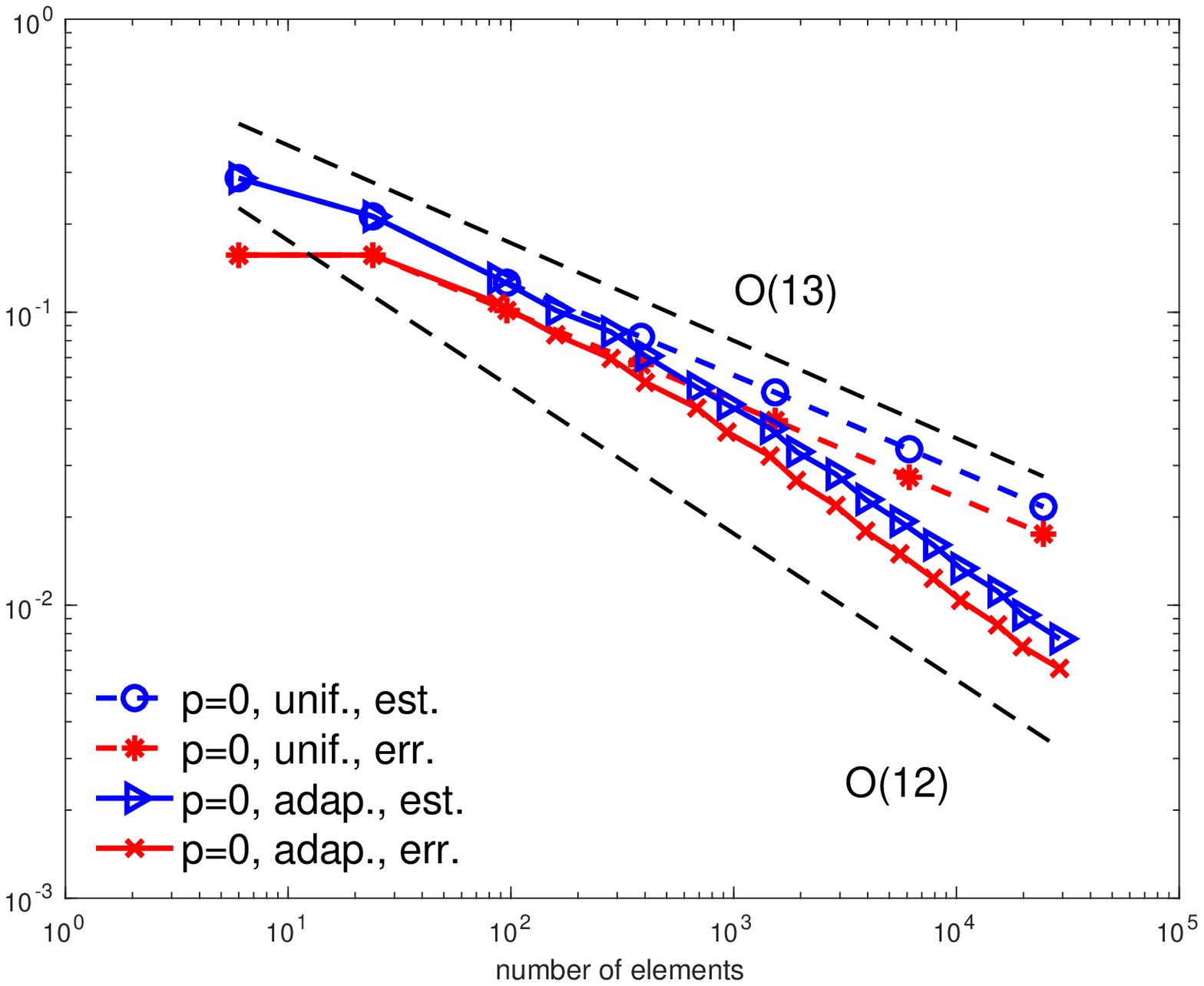}
%
%
\includegraphics[width=0.32\textwidth,clip=true]{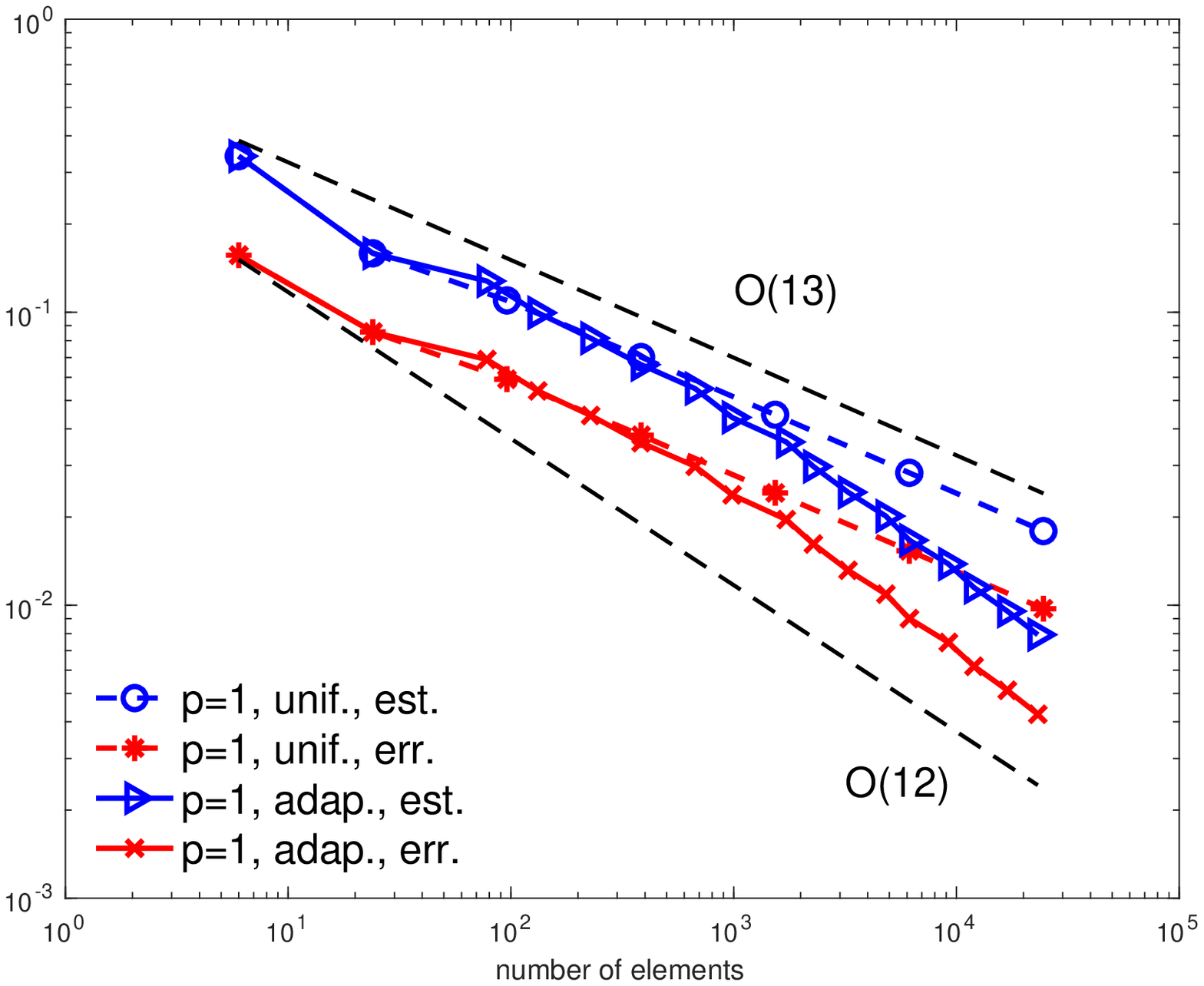}
%
%
\includegraphics[width=0.32\textwidth,clip=true]{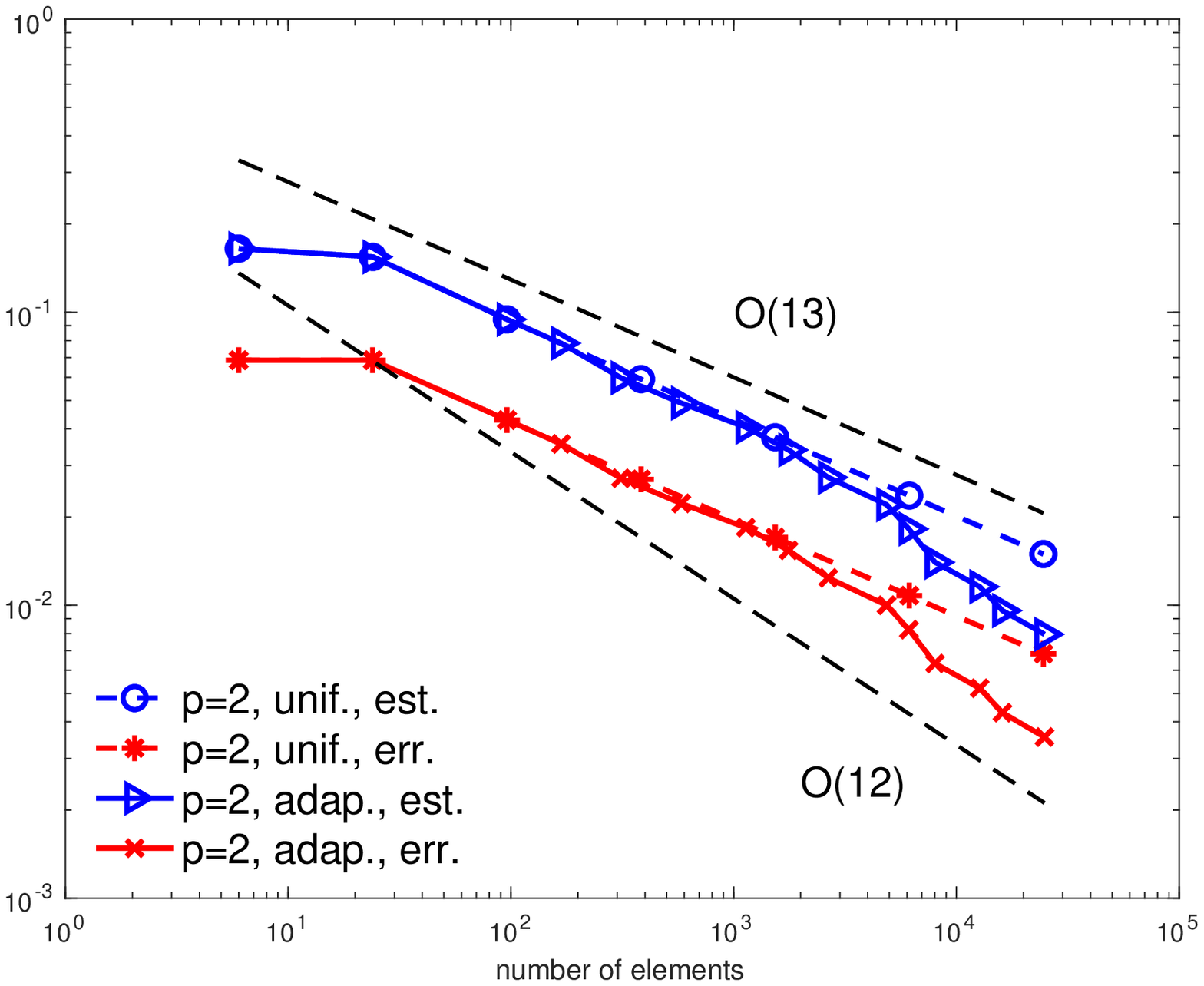}
\end{center}
\caption{Experiment with solution with edge singularities on cube
 of  Section~\ref{subsec:cube bem1}. 
Energy error $\norm{ \phi- \Phi_\ell}{\mathfrak{V}}$ and estimator $\eta_\ell$ of Algorithm~\ref{alg:bem algorithm} for hierarchical splines of degree $p\in\{0,1,2\}$ are plotted versus the number of elements $\#\TT_\ell$.
Uniform  and adaptive ($\theta=0.5$) refinement is considered.
}
\label{fig:cube_p} 
\end{figure}

 \begin{figure}[h!]
\psfrag{p=0, unif., est.}[l][l]{\fontsize{5pt}{6pt}\selectfont  $p=0$, unif., est.}
\psfrag{p=0, unif., err.}[l][l]{\fontsize{5pt}{6pt}\selectfont  $p=0$, unif., err.}
\psfrag{p=0, adap., est.}[l][l]{\fontsize{5pt}{6pt}\selectfont   $p=0$, adap., est.}
\psfrag{p=0, adap., err.}[l][l]{\fontsize{5pt}{6pt}\selectfont   $p=0$, adap., err.}
\psfrag{p=1, unif., est.}[l][l]{\fontsize{5pt}{6pt}\selectfont  $p=1$, unif., est.}
\psfrag{p=1, unif., err.}[l][l]{\fontsize{5pt}{6pt}\selectfont  $p=1$, unif., err.}
\psfrag{p=1, adap., est.}[l][l]{\fontsize{5pt}{6pt}\selectfont   $p=1$, adap., est.}
\psfrag{p=1, adap., err.}[l][l]{\fontsize{5pt}{6pt}\selectfont   $p=1$, adap., err.}
\psfrag{p=2, unif., est.}[l][l]{\fontsize{5pt}{6pt}\selectfont  $p=2$, unif., est.}
\psfrag{p=2, unif., err.}[l][l]{\fontsize{5pt}{6pt}\selectfont  $p=2$, unif., err.}
\psfrag{p=2, adap., est.}[l][l]{\fontsize{5pt}{6pt}\selectfont   $p=2$, adap., est.}
\psfrag{p=2, adap., err.}[l][l]{\fontsize{5pt}{6pt}\selectfont   $p=2$, adap., err.}
\psfrag{p=3, unif., est.}[l][l]{\fontsize{5pt}{6pt}\selectfont  $p=3$, unif., est.}
\psfrag{p=3, unif., err.}[l][l]{\fontsize{5pt}{6pt}\selectfont  $p=3$, unif., err.}
\psfrag{p=3, adap., est.}[l][l]{\fontsize{5pt}{6pt}\selectfont   $p=3$, adap., est.}
\psfrag{p=3, adap., err.}[l][l]{\fontsize{5pt}{6pt}\selectfont   $p=3$, adap., err.}
\psfrag{error and estimator}[c][c]{\fontsize{5pt}{6pt}\selectfont  error and estimator}
\psfrag{number of elements}[c][c]{\fontsize{5pt}{6pt}\selectfont  number of elements}
\psfrag{O(2.5)}[r][r]{\fontsize{5pt}{6pt}\selectfont $\mathcal{O}(N^{-5/2})$}
\psfrag{O(rh)}[l][l]{\fontsize{5pt}{6pt}\selectfont $\mathcal{O}(N^{-1})$}
\psfrag{O(13)}[l][l]{\fontsize{5pt}{6pt}\selectfont $\mathcal{O}(N^{-1/3})$}
\psfrag{O(12)}[r][r]{\fontsize{5pt}{6pt}\selectfont $\mathcal{O}(N^{-1/2})$}
\psfrag{O(1.5)}[r][r]{\fontsize{5pt}{6pt}\selectfont $\mathcal{O}(N^{-3/2})$}
\psfrag{O(2)}[r][r]{\fontsize{5pt}{6pt}\selectfont $\mathcal{O}(N^{-2})$}
\psfrag{O(34)}[r][r]{\fontsize{5pt}{6pt}\selectfont $\mathcal{O}(N^{-3/4})$}
\psfrag{O(54)}[r][r]{\fontsize{5pt}{6pt}\selectfont $\mathcal{O}(N^{-5/4})$}
\psfrag{O(74)}[r][r]{\fontsize{5pt}{6pt}\selectfont $\mathcal{O}(N^{-7/4})$}
\psfrag{O(92)}[r][r]{\fontsize{5pt}{6pt}\selectfont $\mathcal{O}(N^{-9/2})$}
\psfrag{O(1)}[l][l]{\fontsize{5pt}{6pt}\selectfont $\mathcal{O}(N^{-1})$}
\centering 
\includegraphics[width=0.32\textwidth]{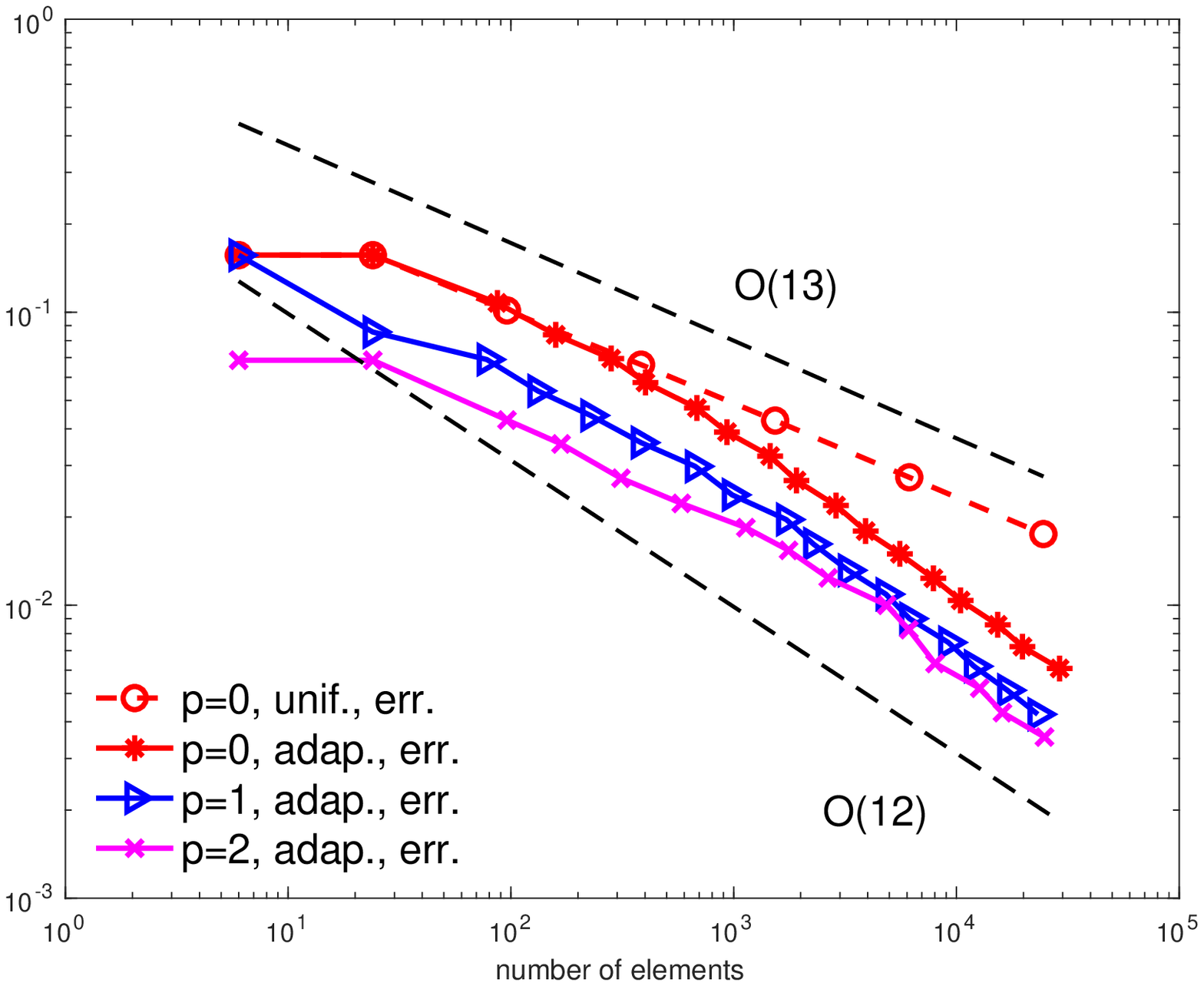}
\caption{Experiment with solution with edge singularities on cube of Section~\ref{subsec:cube bem1}. 
The energy errors $\norm{\phi- \Phi_\ell}{\mathfrak{V}}$ of Algorithm~\ref{alg:bem algorithm} for hierarchical splines of degree $p\in\{0,1,2\}$ are plotted versus the number of elements $\#\TT_\ell$.
Uniform (for $p=0$) and adaptive ($\theta=0.5$ for $p\in\{0,1,2\}$) refinement is considered.}
\label{fig:cube_pcomp} 
\end{figure}

\subsection{Nearly singular solution on quarter pipe}
\label{subsec:bendcube bem1}
We consider the quarter pipe 
\begin{align}
\Omega: = \big\{10^{-1}(r\cos(\beta),r\sin(\beta),z):~ r \in(1/2, 1)\wedge\beta\in(0,{\pi}/{2})\wedge z\in(0,1)\big\};
\end{align}
see Figure~\ref{fig:bendcube_mesh}.
We split the boundary $\Gamma$ into the six surfaces
\begin{align*}
\Gamma_1&:=\set{10^{-1}(\cos(\beta)/2,\sin(\beta)/2,z)}{\beta\in(0,{\pi}/{2})\wedge z\in(0,1)}\\
\Gamma_2&:=\set{10^{-1}(r,0,z)}{r\in(1/2,1)\wedge z\in(0,1)}\\
\Gamma_3&:=\set{10^{-1}(\cos(\beta),\sin(\beta),z)}{\beta\in(0,{\pi}/{2})\wedge z\in(0,1)}\\
\Gamma_4&:=\set{10^{-1}(0,r,z)}{r\in(1/2,1)\wedge z\in(0,1)}\\
\Gamma_5&:=\set{10^{-1}(r\cos(\beta),r\sin(\beta),0)}{r\in(1/2,1)\wedge\beta\in(0,{\pi}/{2})}\\
\Gamma_6&:=\set{10^{-1}(r\cos(\beta),r\sin(\beta),1)}{r\in(1/2,1)\wedge\beta\in(0,{\pi}/{2})}
\end{align*}
$\Gamma_1,\Gamma_3,\Gamma_5$, and $\Gamma_6$ can be parametrized by rational splines of degree $p_{1(\gamma,m)}:=2, p_{2(\gamma,m)}:=1$ corresponding to the knot vectors  $\widehat\KK_{1(\gamma,m)}:=(0,0,0,1,1,1), \widehat\KK_{2(\gamma,m)} := (0,0,1,1)$; see \cite[Chapter~8]{NURBSbook}.
The affine surfaces $\Gamma_2$ and $\Gamma_4$ can be parametrized by non-rational splines of degree $p_{1(\gamma,m)}:= p_{2(\gamma,m)}:=1$ corresponding to the knot vectors  $\widehat\KK_{1(\gamma,m)}:= \widehat\KK_{2(\gamma,m)} := (0,0,1,1)$; see \cite[Section~6.1]{igafem}.

\begin{figure}[h!]
\psfrag{x1}[c][c]{\fontsize{5pt}{6pt}\selectfont $x_1$}
\psfrag{x2}[c][c]{\fontsize{5pt}{6pt}\selectfont $x_2$}
\psfrag{x3}[c][c]{\fontsize{5pt}{6pt}\selectfont $x_3$}

\begin{center}
\includegraphics[width=0.24\textwidth,clip=true]{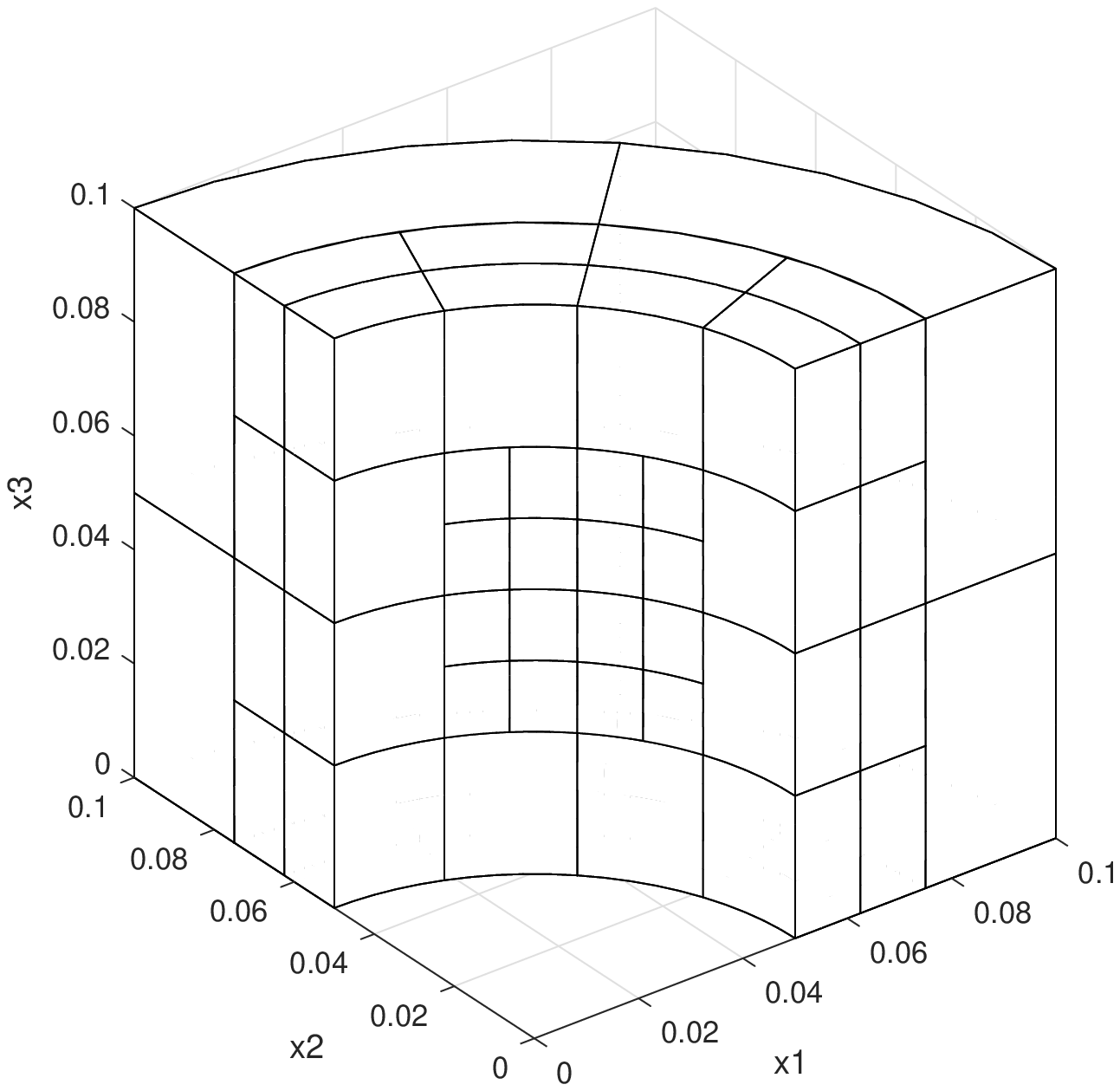}
\includegraphics[width=0.24\textwidth,clip=true]{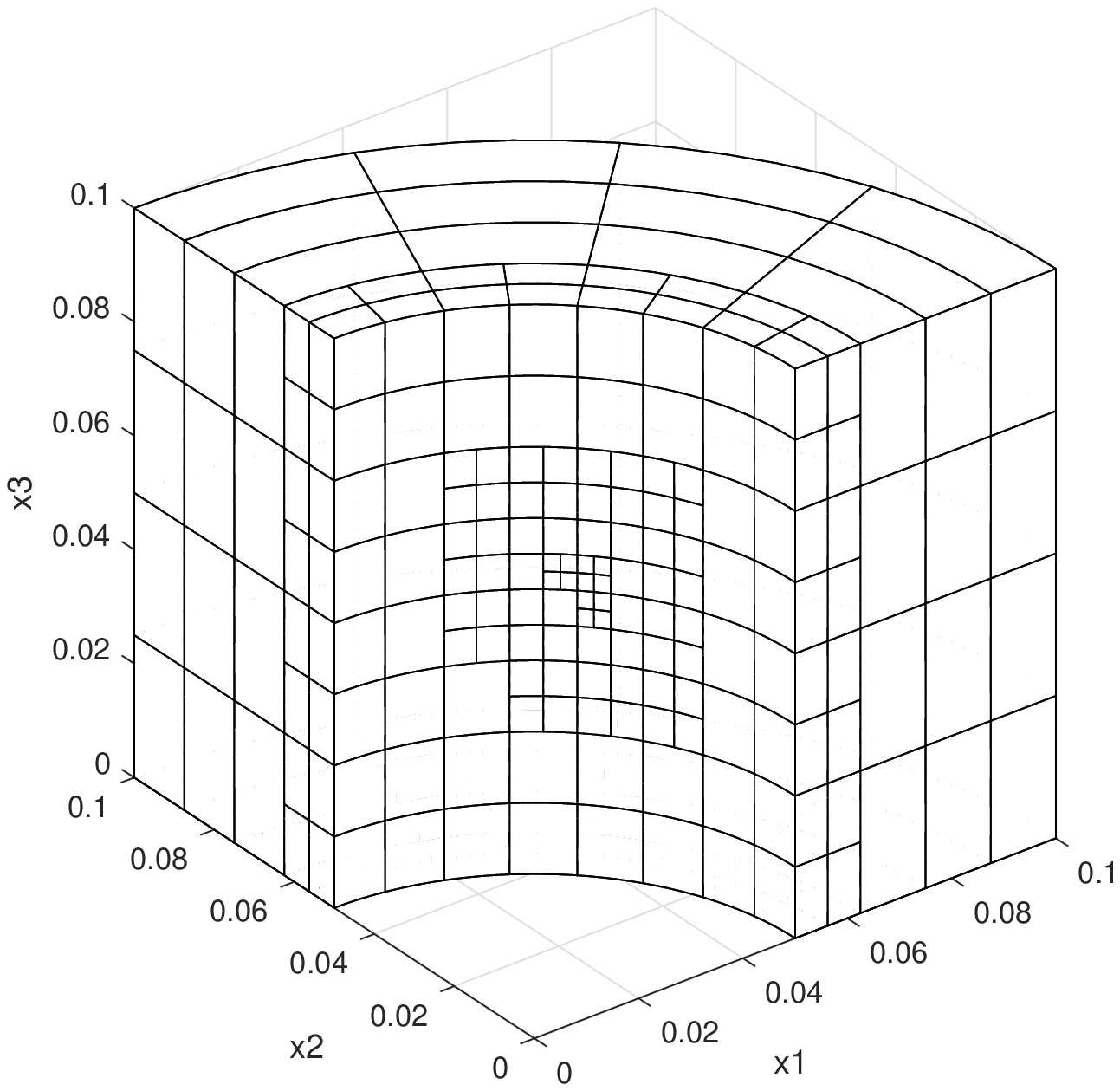}
%
%
\includegraphics[width=0.24\textwidth,clip=true]{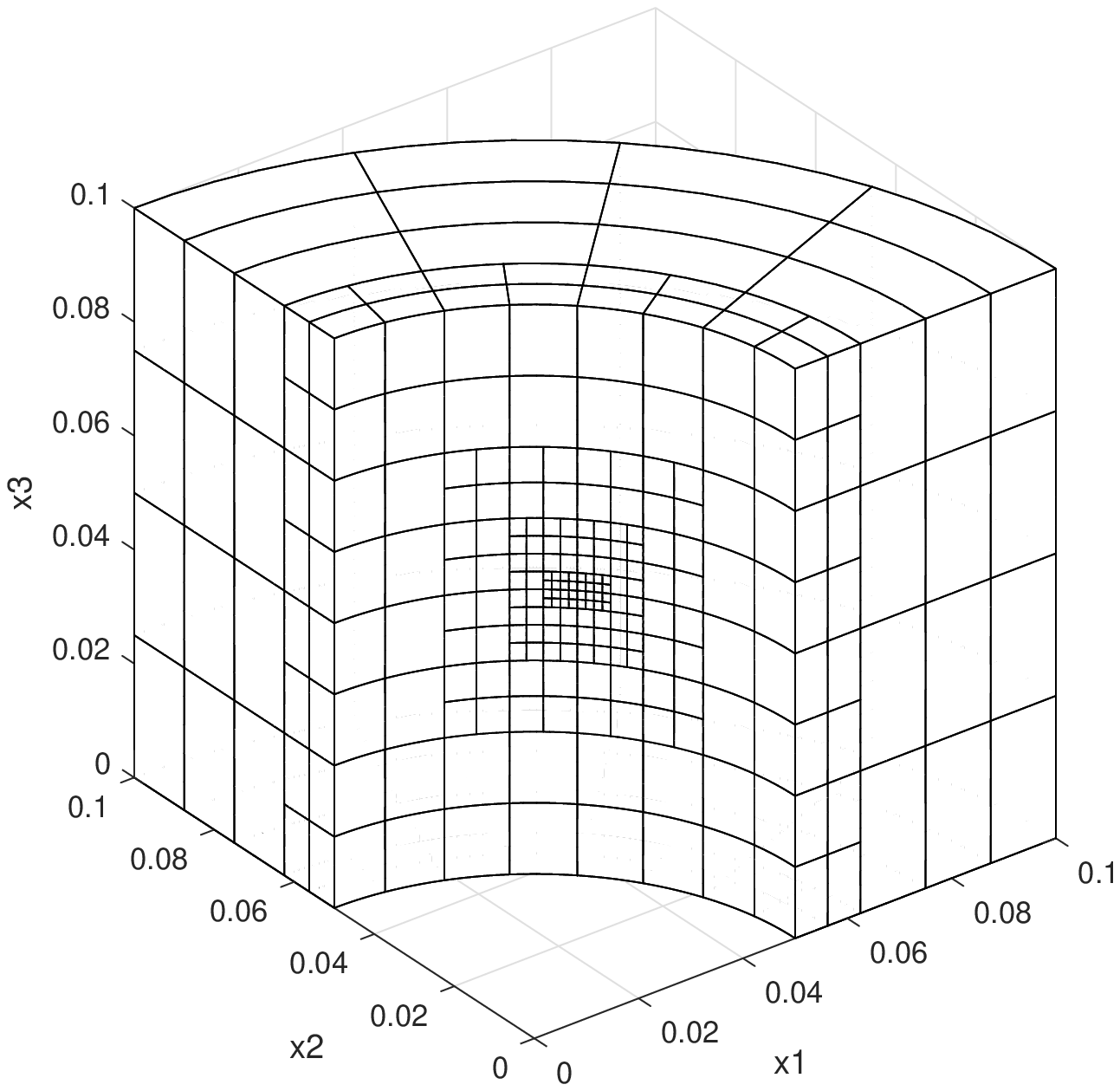}
\includegraphics[width=0.24\textwidth,clip=true]{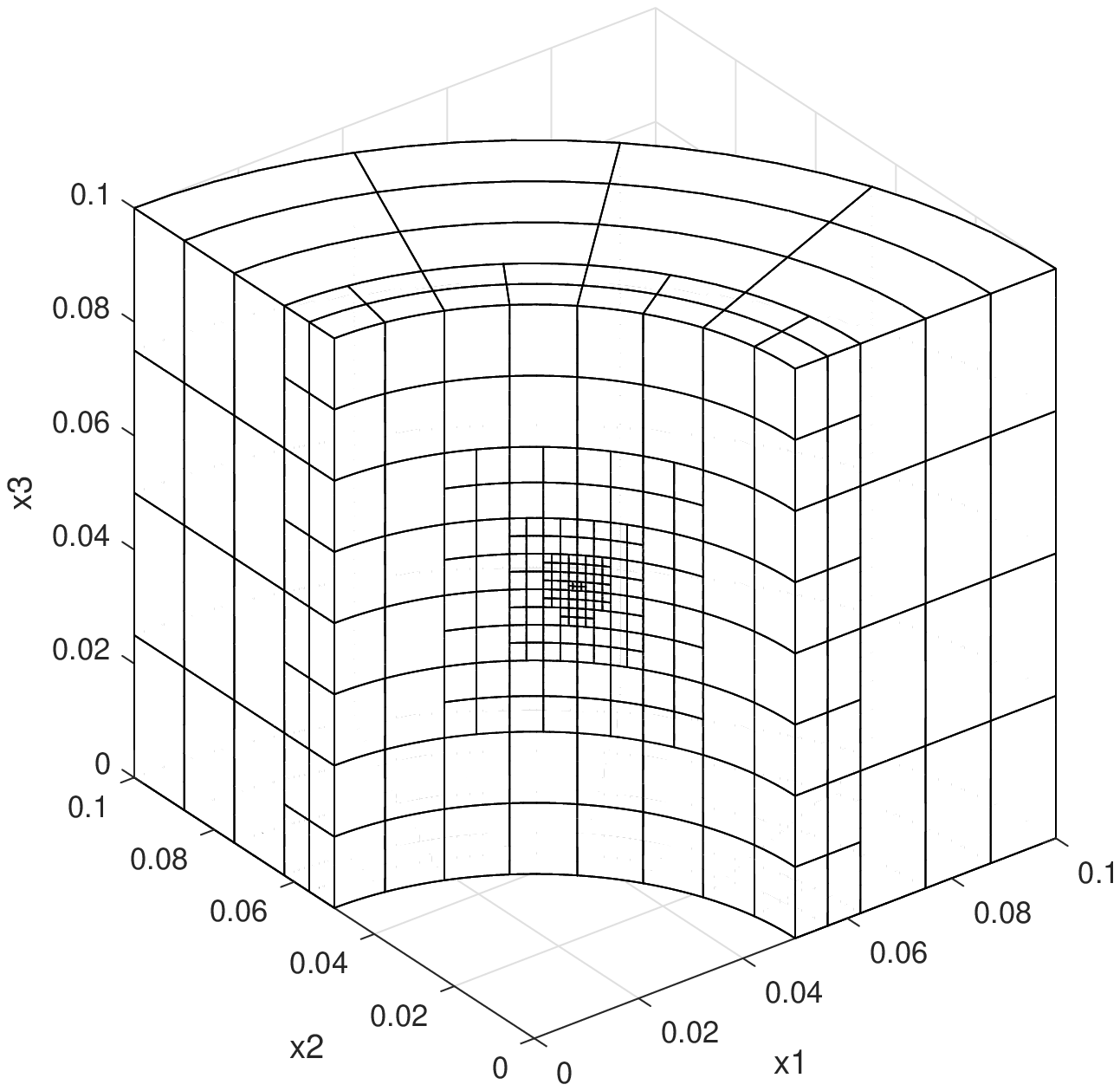}
\end{center}
\caption{Experiment with nearly singular solution on quarter pipe of Section~\ref{subsec:bendcube bem1}. 
Hierarchical meshes $\TT_4,\TT_7,\TT_{9},\TT_{10}$ generated by  Algorithm~\ref{alg:bem algorithm} (with $\theta=0.5$) for hierarchical splines of degree $p=1$. 
}
\label{fig:bendcube_mesh}
\end{figure}

We prescribe the exact solution of the interior Laplace--Dirichlet problem \eqref{eq:Laplace interior bem} as the shifted fundamental solution
\begin{equation}
u(x):=G(x-y_0)=\frac{1}{4\pi}\frac{1}{|x-y_0|}, 
\end{equation}
with $y_0:=10^{-1}(0.95\cdot 2^{-3/2},0.95\cdot 2^{-3/2},1/2)\in\R^3\setminus\overline\Omega$.
Although $u$ is smooth on $\overline\Omega$, it is nearly singular at the midpoint  $\widetilde y_0:=10^{-1}( 2^{-3/2},2^{-3/2},1/2)$ of $\Gamma_1$.
We consider the corresponding integral equation \eqref{eq:Symmy interior}.
The normal derivative $\phi=\partial_\nu u$ of $u$ reads 
\begin{equation}
\phi(x)=-\frac{1}{4\pi}\frac{x-y_0}{|x-y_0|^3}\cdot\nu(x).
\end{equation}

\begin{figure}[h!]
\psfrag{p=0, unif., est.}[l][l]{\fontsize{5pt}{6pt}\selectfont $p=0$, unif., est.}
\psfrag{p=0, unif., err.}[l][l]{\fontsize{5pt}{6pt}\selectfont $p=0$, unif., err.}
\psfrag{p=0, adap., est.}[l][l]{\fontsize{5pt}{6pt}\selectfont  $p=0$, adap., est.}
\psfrag{p=0, adap., err.}[l][l]{\fontsize{5pt}{6pt}\selectfont  $p=0$, adap., err.}
\psfrag{p=1, unif., est.}[l][l]{\fontsize{5pt}{6pt}\selectfont $p=1$, unif., est.}
\psfrag{p=1, unif., err.}[l][l]{\fontsize{5pt}{6pt}\selectfont $p=1$, unif., err.}
\psfrag{p=1, adap., est.}[l][l]{\fontsize{5pt}{6pt}\selectfont  $p=1$, adap., est.}
\psfrag{p=1, adap., err.}[l][l]{\fontsize{5pt}{6pt}\selectfont  $p=1$, adap., err.}
\psfrag{p=2, unif., est.}[l][l]{\fontsize{5pt}{6pt}\selectfont $p=2$, unif., est.}
\psfrag{p=2, unif., err.}[l][l]{\fontsize{5pt}{6pt}\selectfont $p=2$, unif., err.}
\psfrag{p=2, adap., est.}[l][l]{\fontsize{5pt}{6pt}\selectfont  $p=2$, adap., est.}
\psfrag{p=2, adap., err.}[l][l]{\fontsize{5pt}{6pt}\selectfont  $p=2$, adap., err.}
\psfrag{p=3, unif., est.}[l][l]{\fontsize{5pt}{6pt}\selectfont $p=3$, unif., est.}
\psfrag{p=3, unif., err.}[l][l]{\fontsize{5pt}{6pt}\selectfont $p=3$, unif., err.}
\psfrag{p=3, adap., est.}[l][l]{\fontsize{5pt}{6pt}\selectfont  $p=3$, adap., est.}
\psfrag{p=3, adap., err.}[l][l]{\fontsize{5pt}{6pt}\selectfont  $p=3$, adap., err.}
\psfrag{error and estimator}[c][c]{\fontsize{5pt}{6pt}\selectfont error and estimator}
\psfrag{number of elements}[c][c]{\fontsize{5pt}{6pt}\selectfont number of elements}
\psfrag{O(2.5)}[r][r]{\fontsize{5pt}{6pt}\selectfont $\mathcal{O}(N^{-5/2})$}
\psfrag{O(rh)}[l][l]{\fontsize{5pt}{6pt}\selectfont $\mathcal{O}(N^{-1})$}
\psfrag{O(1)}[l][l]{\fontsize{5pt}{6pt}\selectfont $\mathcal{O}(N^{-1})$}
\psfrag{O(1.5)}[r][r]{\fontsize{5pt}{6pt}\selectfont $\mathcal{O}(N^{-5/2})$}
\psfrag{O(1.5)}[r][r]{\fontsize{5pt}{6pt}\selectfont $\mathcal{O}(N^{-3/2})$}
\psfrag{O(2)}[r][r]{\fontsize{5pt}{6pt}\selectfont $\mathcal{O}(N^{-2})$}
\psfrag{O(34)}[r][r]{\fontsize{5pt}{6pt}\selectfont $\mathcal{O}(N^{-3/4})$}
\psfrag{O(54)}[r][r]{\fontsize{5pt}{6pt}\selectfont $\mathcal{O}(N^{-5/4})$}
\psfrag{O(74)}[c][c]{\fontsize{5pt}{6pt}\selectfont $\mathcal{O}(N^{-7/4})$}
\psfrag{O(92)}[r][r]{\fontsize{5pt}{6pt}\selectfont $\mathcal{O}(N^{-9/2})$}
\psfrag{O(1)}[l][l]{\fontsize{5pt}{6pt}\selectfont $\mathcal{O}(N^{-1})$}
\begin{center}
\includegraphics[width=0.32\textwidth,clip=true]{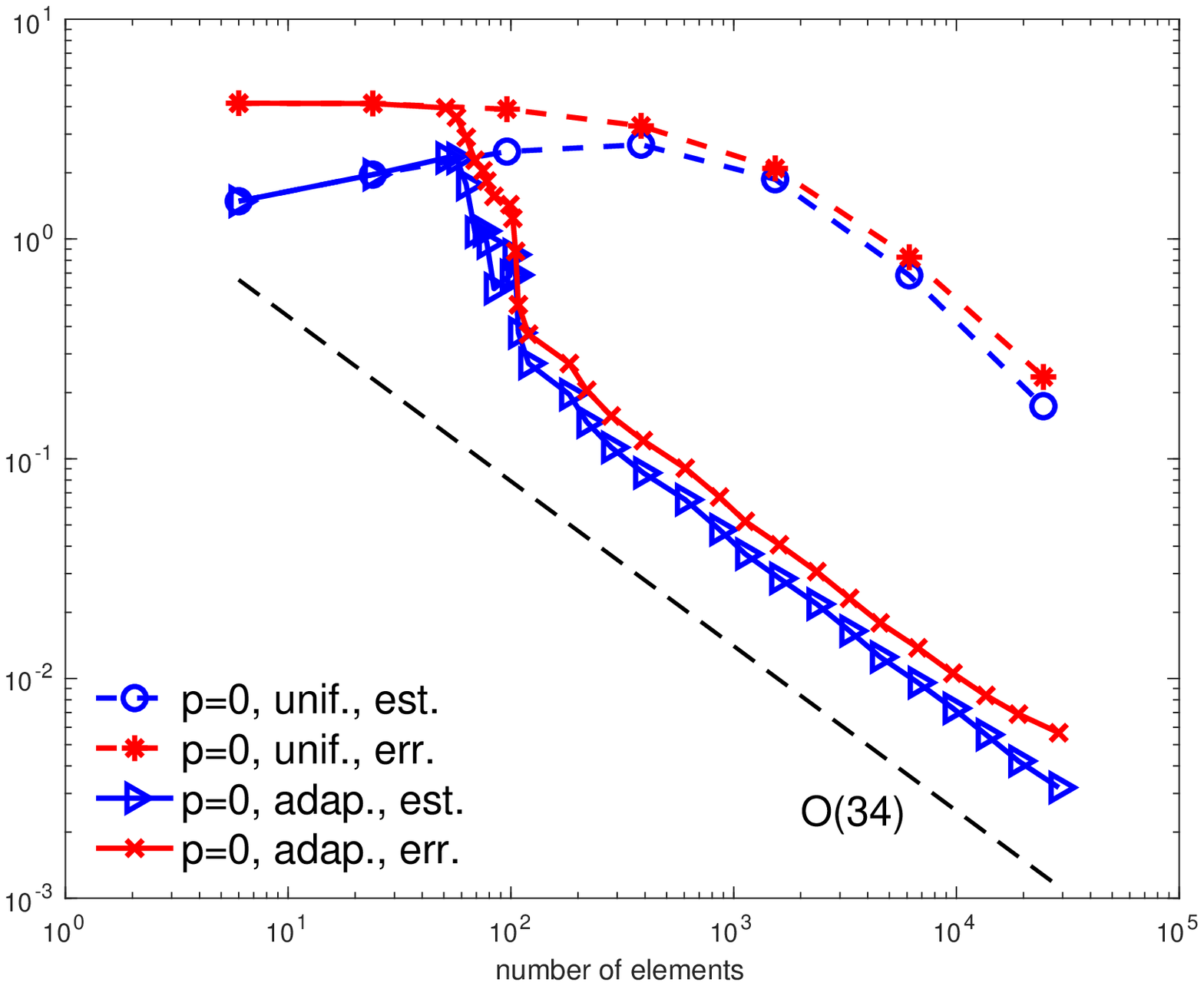}
%
%
\includegraphics[width=0.32\textwidth,clip=true]{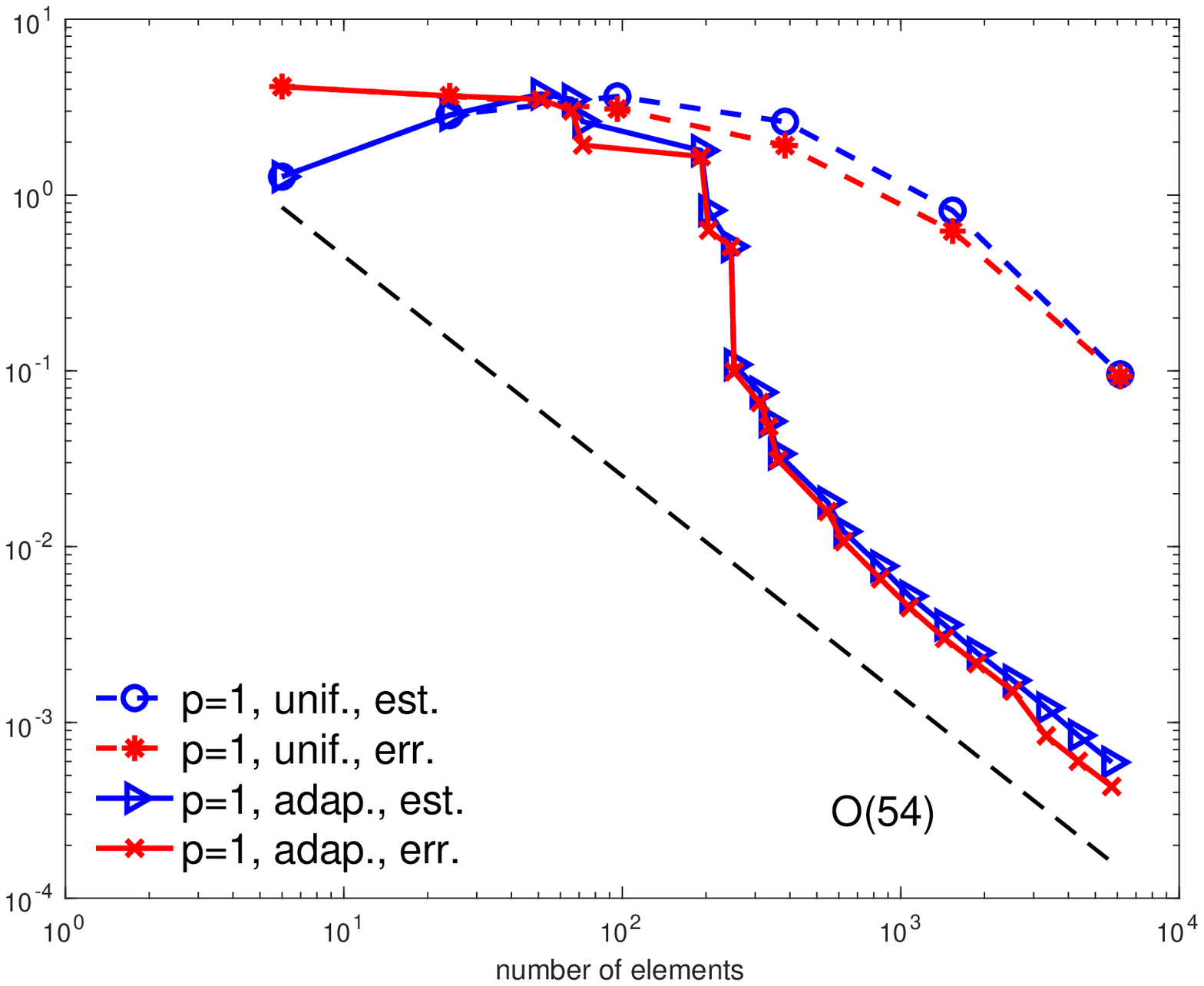}
%
%
\includegraphics[width=0.32\textwidth,clip=true]{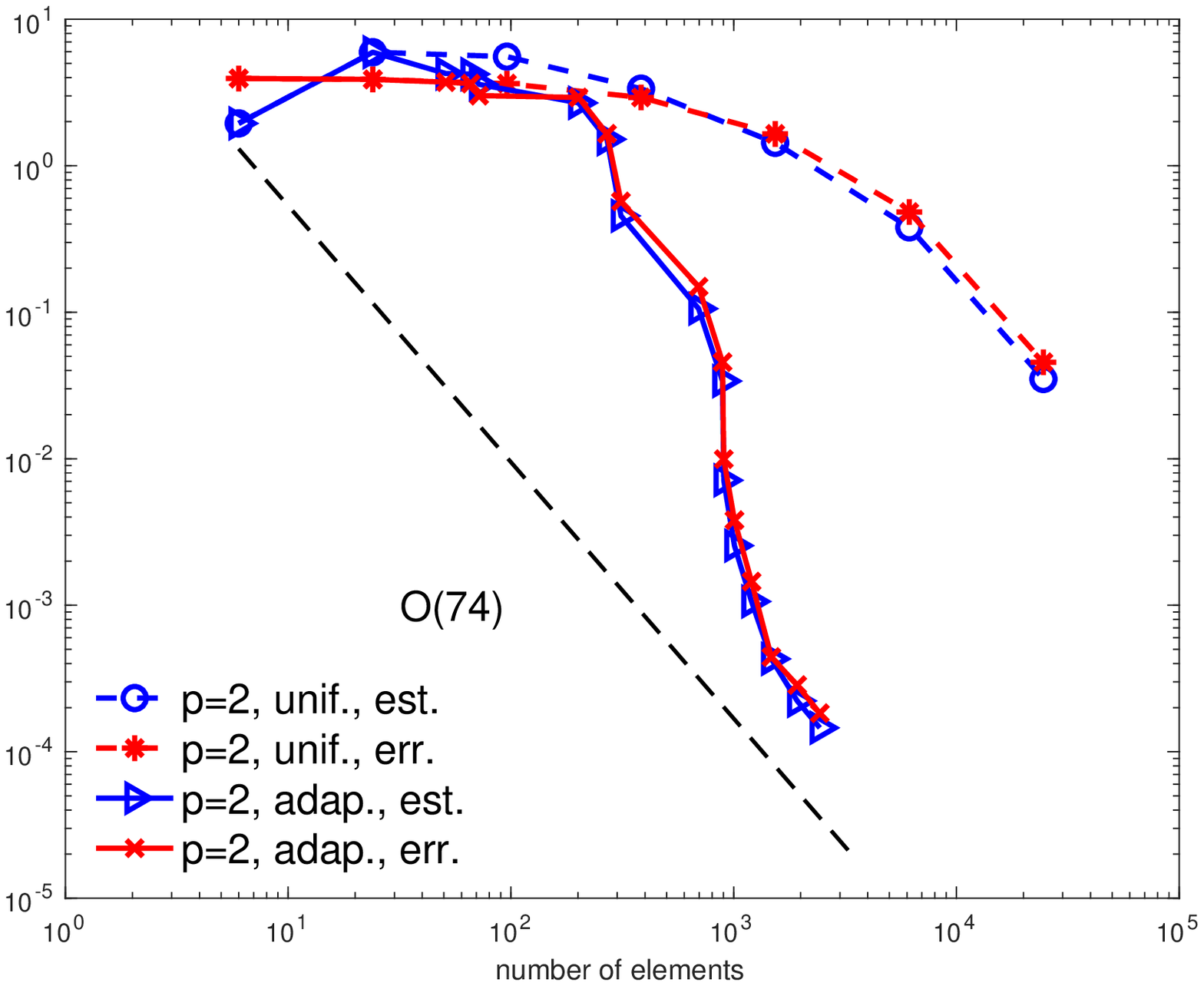}
\end{center}
\caption{Experiment with nearly singular solution on quarter pipe of  Section~\ref{subsec:bendcube bem1}. 
Energy error $\norm{\phi-\Phi_\ell}{\mathfrak{V}}$ and estimator $\eta_\ell$ of Algorithm~\ref{alg:bem algorithm} for hierarchical splines of degree $p\in\{0,1,2\}$ are plotted versus the number of elements $\#\TT_\ell$.
Uniform and adaptive ($\theta=0.5$) refinement is considered.
}
\label{fig:bendcube_p} 
\end{figure}

We consider polynomial degrees $p\in\{0,1,2\}$.
For the initial ansatz space with spline degree $p_{1,m}:=p_{2,m}:=p$ for all $m\in\{1,\dots 6\}$, we choose the initial knot vectors $\widehat\KK_{1(0,m)}:=\widehat\KK_{2(0,m)}:=(0,\dots 0,1,\dots,1)$ for all $m\in\{1,\dots,6\}$, where the multiplicity of $0$ and $1$ is $p+1$.
We choose the parameters of Algorithm~\ref{alg:bem algorithm} as $\theta=0.5$ and $\const{min}=1$, where we use the refinement strategy of Remark~\ref{rem:rational main bem} {\rm(c)} in the lowest-order case $p=0$.
For comparison, we also consider uniform refinement, where we mark all elements in each step, i.e., $\MM_\ell=\TT_\ell$ for all $\ell\in\N_0$. 
 In Figure~\ref{fig:bendcube_mesh}, one can see some adaptively generated hierarchical meshes.
 In Figure~\ref{fig:bendcube_p} and Figure~\ref{fig:bendcube_pcomp}, 
we plot   the energy error $\norm{\phi-\Phi_\ell}{\mathfrak{V}}$ and the error estimator $\eta_\ell$ against the  number of elements $\#\TT_\ell$.  
All values are plotted in a double-logarithmic scale and the experimental convergence rates are visible as the slope of the corresponding curves.
 In all cases, the lines of the error and the error estimator are parallel, which numerically indicates reliability 
 and efficiency.
Since the solution $\phi$ is smooth, the uniform and the adaptive approach both lead to the optimal asymptotic convergence rate $\mathcal{O}((\#\TT_\ell)^{-3/4-p/2})$. 
However, $\phi$ is nearly singular at $\widetilde y_0$, which is why adaptivity yields a much better multiplicative constant.

\begin{figure}[h!]
\psfrag{p=0, unif., est.}[l][l]{\fontsize{5pt}{6pt}\selectfont  $p=0$, unif., est.}
\psfrag{p=0, unif., err.}[l][l]{\fontsize{5pt}{6pt}\selectfont  $p=0$, unif., err.}
\psfrag{p=0, adap., est.}[l][l]{\fontsize{5pt}{6pt}\selectfont   $p=0$, adap., est.}
\psfrag{p=0, adap., err.}[l][l]{\fontsize{5pt}{6pt}\selectfont   $p=0$, adap., err.}
\psfrag{p=1, unif., est.}[l][l]{\fontsize{5pt}{6pt}\selectfont  $p=1$, unif., est.}
\psfrag{p=1, unif., err.}[l][l]{\fontsize{5pt}{6pt}\selectfont  $p=1$, unif., err.}
\psfrag{p=1, adap., est.}[l][l]{\fontsize{5pt}{6pt}\selectfont   $p=1$, adap., est.}
\psfrag{p=1, adap., err.}[l][l]{\fontsize{5pt}{6pt}\selectfont   $p=1$, adap., err.}
\psfrag{p=2, unif., est.}[l][l]{\fontsize{5pt}{6pt}\selectfont  $p=2$, unif., est.}
\psfrag{p=2, unif., err.}[l][l]{\fontsize{5pt}{6pt}\selectfont  $p=2$, unif., err.}
\psfrag{p=2, adap., est.}[l][l]{\fontsize{5pt}{6pt}\selectfont   $p=2$, adap., est.}
\psfrag{p=2, adap., err.}[l][l]{\fontsize{5pt}{6pt}\selectfont   $p=2$, adap., err.}
\psfrag{p=3, unif., est.}[l][l]{\fontsize{5pt}{6pt}\selectfont  $p=3$, unif., est.}
\psfrag{p=3, unif., err.}[l][l]{\fontsize{5pt}{6pt}\selectfont  $p=3$, unif., err.}
\psfrag{p=3, adap., est.}[l][l]{\fontsize{5pt}{6pt}\selectfont   $p=3$, adap., est.}
\psfrag{p=3, adap., err.}[l][l]{\fontsize{5pt}{6pt}\selectfont   $p=3$, adap., err.}
\psfrag{error and estimator}[c][c]{\fontsize{5pt}{6pt}\selectfont  error and estimator}
\psfrag{number of elements}[c][c]{\fontsize{5pt}{6pt}\selectfont  number of elements}
\psfrag{O(2.5)}[r][r]{\fontsize{5pt}{6pt}\selectfont $\mathcal{O}(N^{-5/2})$}
\psfrag{O(rh)}[l][l]{\fontsize{5pt}{6pt}\selectfont $\mathcal{O}(N^{-1})$}
\psfrag{O(1)}[l][l]{\fontsize{5pt}{6pt}\selectfont $\mathcal{O}(N^{-1})$}
\psfrag{O(1.5)}[r][r]{\fontsize{5pt}{6pt}\selectfont $\mathcal{O}(N^{-5/2})$}
\psfrag{O(1.5)}[r][r]{\fontsize{5pt}{6pt}\selectfont $\mathcal{O}(N^{-3/2})$}
\psfrag{O(2)}[r][r]{\fontsize{5pt}{6pt}\selectfont $\mathcal{O}(N^{-2})$}
\psfrag{O(34)}[c][tc]{\fontsize{5pt}{6pt}\selectfont $\mathcal{O}(N^{-3/4})$}
\psfrag{O(54)}[l][l]{\fontsize{5pt}{6pt}\selectfont $\mathcal{O}(N^{-5/4})$}
\psfrag{O(74)}[c][c]{\fontsize{5pt}{6pt}\selectfont $\mathcal{O}(N^{-7/4})$}
\psfrag{O(92)}[r][r]{\fontsize{5pt}{6pt}\selectfont $\mathcal{O}(N^{-9/2})$}
\psfrag{O(1)}[l][l]{\fontsize{5pt}{6pt}\selectfont $\mathcal{O}(N^{-1})$}
\centering 
\includegraphics[width=0.32\textwidth]{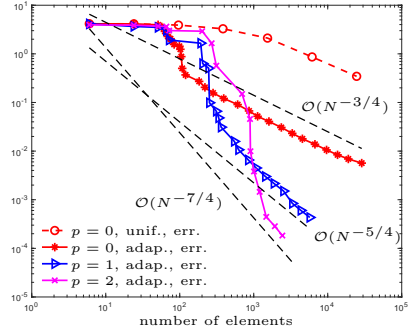}
\caption{Experiment with nearly singular solution on quarter pipe of Section~\ref{subsec:bendcube bem1}. 
The energy errors $\norm{\phi- \Phi_\ell}{\mathfrak{V}}$ of Algorithm~\ref{alg:bem algorithm} for hierarchical splines of degree $p\in\{0,1,2\}$ are plotted versus the number of elements $\#\TT_\ell$.
Uniform (for $p=2$) and adaptive ($\theta=0.5$ for $p\in\{0,1,2\}$) refinement is considered.}
\label{fig:bendcube_pcomp} 
\end{figure}


\section{Proof of Theorem~\ref{thm:main bem}}\label{sec:axioms revisited bem}\label{sec:abstract setting bem}
In \cite[Section~3]{gp20}, we have identified the crucial properties of the underlying meshes \revision{(see~\eqref{M:patch bem}--\eqref{M:semi bem} below)}, the mesh-refinement \revision{(see~\eqref{R:sons bem}--\eqref{R:reduction bem} below)}, as well as the boundary element spaces \revision{(see~\eqref{S:inverse bem}--\eqref{S:stab bem} below)} which ensure that the residual error estimator fits into the general framework of \cite{axioms} and which hence guarantee \revision{reliability~\ref{eq:reliable bem} and linear} convergence \revision{at optimal rate~\eqref{eq:linear bem}--\eqref{eq:optimal bem}; see \cite[Theorem~3.4]{gp20}.}
The goal of this section is to recapitulate these properties and \revision{subsequently verify them for the present isogeometric setting};
 see also \cite[Section~5.2--5.3]{diss} for more details. 
\revision{In Section~\ref {sec:rational main proof bem}, we further briefly comment on how to verify the mentioned properties for rational hierarchical splines; see Remark~\ref{rem:rational main bem}.}


\subsection{Meshes}\label{subsec:boundary discrete bem}
\revision{In \cite{gp20}, we even considered general \textit{meshes}} $\TT_\coarse$ of the boundary $\Gamma=\partial\Omega$ of the bounded Lipschitz domain $\Omega\subset\R^d$ in the following sense:
\begin{enumerate}[\rm (i)]
\item $\TT_\coarse$ is a finite set of compact Lipschitz domains on $\Gamma$, i.e., each element $T$ has the form $T=\gamma_T({\widehat{T}})$, where ${\widehat T}$ is a compact\footnote{A compact Lipschitz domain is the closure of a bounded Lipschitz domain.
For $d=2$, it is the finite union of compact intervals with non-empty interior.} Lipschitz domain in $\R^{d-1}$ and $\gamma_T:{ \widehat T}\to T$ is bi-Lipschitz.
\item  $\TT_\coarse$ covers $\Gamma$, i.e., $\Gamma = \bigcup_{T\in\TT_\coarse}{T}$.
\item For all $T,T'\in\TT_\coarse$ with $T\neq T'$, the intersection $T\cap T'$ has $(d-1)$-dimensional Hausdorff measure  zero.
\end{enumerate}
\revision{Clearly, any admissible hierarchical mesh $\TT_\coarse\in\T$ is a mesh in this sense, where 
\begin{align}
\widehat T:=\gamma_m^{-1}(T)\quad \text{and}\quad \gamma_T:=\gamma_m|_{\widehat T} \quad \text{for } T\in\TT_{\coarse,m} \text{ with }m\in\{1,\dots,M\}. 
\end{align}}
In order to ease notation, we introduce for \revision{\emph{any} mesh} $\TT_\coarse$ the corresponding \textit{mesh-width function} 
\begin{align}
h_\coarse\in L^\infty(\Gamma)\quad\text{with}\quad h_\coarse|_T=h_T:=|T|^{1/(d-1)}\text{ for all }T\in\TT_\coarse.
\end{align}
 For  $\omega\subseteq \Gamma$, we define the patches of order $q\in\N_0$ inductively by
\begin{align}
 \pi_\bullet^0(\omega) := \omega,
 \quad 
 \pi_\bullet^q(\omega) := \bigcup\set{T\in\TT_\bullet}{ {T}\cap \pi_\bullet^{q-1}(\omega)\neq \emptyset}.
\end{align}
The corresponding set of elements is
\begin{align}\label{eq:patch defined}
 \Pi_\bullet^q(\omega) := \set{T\in\TT_\bullet}{ {T} \subseteq \pi_\bullet^q(\omega)},
 \quad\text{i.e.,}\quad
 \pi_\bullet^q(\omega) = \bigcup\Pi_\bullet^q(\omega) \text{ for } q>0.
\end{align}
To abbreviate notation, we set $\pi_\bullet(\omega) := \pi_\bullet^1(\omega)$ and $\Pi_\bullet(\omega) := \Pi_\bullet^1(\omega)$.
If $\omega=\{z\}$ for some $z\in\Gamma$, we write  $\pi^q_\bullet(z):=\pi^q_\bullet(\{z\})$ and $\Pi_\bullet^q(z) := \Pi_\bullet^q(\{z\})$, where we skip the index  for $q=1$ as before.
  For $\SS\subseteq\TT_\bullet$, we define $\pi_\bullet^q(\SS):=\pi_\bullet^q(\bigcup\SS)$
and $\Pi_\bullet^q(\SS):=\Pi_\bullet^q(\bigcup\SS)$, and the superscript is omitted for $q=1$.

\revision{In the following Sections~\eqref{sec:patch bem}--\eqref{sec:semi bem},} we  \revision{show} the existence of constants $\const{patch},$ $\const{locuni},$ $\const{shape},$ $\const{cent},$ $\const{semi}>0$ such that the following \revision{properties} are satisfied for all \revision{admissible hierarchical meshes} $\TT_\bullet\in\T$:
\begin{enumerate}[(i)]
\renewcommand{\theenumi}{M\arabic{enumi}}
\bf\item\rm\label{M:patch bem}
{\bf Bounded   element patch:} 
For all $T\in\TT_\bullet$, there holds that 
\begin{align*}
\#\Pi_\bullet(T)\le \const{patch},
\end{align*}
i.e., the number of elements in a patch is uniformly bounded.
\bf\item\rm\label{M:locuni bem}
{\bf Local quasi-uniformity:}
For all $T\in\TT_\coarse$, there holds  that
\begin{align*}
{\diam(T)}/{\diam(T')}\le \const{locuni} \quad\text{for all }T'\in\Pi_\coarse(T),
\end{align*}
i.e., neighboring elements have comparable diameter.
\bf\item\rm\label{M:shape bem}
{\bf Shape-regularity:} 
For all  $T\in\TT_\bullet$, there holds that
\begin{align*}
{\diam(T)}/{h_T}\le \const{shape}.
\end{align*}
\bf\item\rm\label{M:cent bem}
{\bf Patch centered elements:} 
For all $T\in\TT_\bullet$, there holds\footnote{We use the convention $\dist(T,\emptyset):=\diam(\Gamma)$.} that
\begin{align*}
\diam(T)\le C_{\rm cent} \, \dist(T,\Gamma\setminus\pi_\bullet(T)),
\end{align*}
 i.e., each element lies essentially in the center of its patch.
\bf\item\rm\label{M:semi bem}
{\bf Local seminorm estimate:}
For all $z\in\Gamma$ and $v\in H^1(\Gamma)$, there holds that
 \begin{align*}
|v|_{H^{1/2}(\pi_{\bullet}(z))}\le \const{semi} \,\diam(\pi_{\bullet}(z))^{1/2}| v|_{H^1(\pi_{\bullet}(z))}.
\end{align*}
\end{enumerate}

The following proposition shows that \eqref{M:semi bem} is actually always satisfied.
However, in general the  multiplicative constant  depends  on the shape of the point patches.
The proof is inspired by \cite[Proposition 2.2]{hitchhiker}, where an analogous assertion for norms instead of seminorms is found.
For $d=2$,  we have already shown the assertion in the recent own work \cite[Lemma~4.5]{resigabem}.
For polyhedral domains $\Omega$ with triangular meshes, it is  proved in \cite[Proposition~3.3]{faerconv} via interpolation techniques. 
Adapting the arguments of \cite[Lemma~4.5]{resigabem}, a detailed proof for our setting is found in \cite[Proposition~5.2.2]{diss}.

\begin{proposition}\label{prop:semi estimate}
Let $\widehat \omega\subset\R^{d-1}$ be a bounded and  connected Lipschitz domain and  $\gamma_\omega:\widehat \omega \to\omega\subseteq\Gamma$ bi-Lipschitz.
In particular, there exists a constant $\const{lipref}>0$  
such that
\begin{align}\label{eq:Clipref}
C_{\rm lipref}^{-1} |s-t|\le \frac{|\gamma_\omega(s)-\gamma_\omega(t)|}{\diam(\omega)}\le C_{\rm lipref} |s-t|\quad\text{for all }s,t\in\widehat\omega.
\end{align}
Then, there exists a  constant $\const{semi}(\widehat\omega)>0$ such that 
\begin{align}\label{eq:local estimate}
|v|_{H^{1/2}(\omega)}\le \const{semi}(\widehat\omega) \,\diam(\omega)^{1/2}| v|_{H^1(\omega)}\quad\text{for all }v\in H^1(\Gamma).
\end{align}
The constant $\const{semi}(\widehat\omega)>0$  depends only on the dimension $d$, the set $\widehat\omega$, and $\const{lipref}$. \hfill$\square$
\end{proposition}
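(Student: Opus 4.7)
The plan is a standard scaling argument: pull the seminorms on $\omega$ back to $\widehat\omega$ via $\gamma_\omega$, apply a reference inequality on $\widehat\omega$, and push the scaling factors back. The bi-Lipschitz bound~\eqref{eq:Clipref} combined with a Gram-determinant estimate analogous to~\eqref{eq:gram bound} shows that the Jacobian of $\gamma_\omega$ scales like $\diam(\omega)^{d-1}$. Carrying out the substitution $x=\gamma_\omega(s)$, $y=\gamma_\omega(t)$ in the Sobolev--Slobodeckij integral from~\eqref{eq:SS-norm} (with exponent $d-1+2\sigma = d$), the numerator contributes nothing while the denominator contributes $\diam(\omega)^d|s-t|^d$ up to constants; after combining with the two Jacobian factors this yields the scaling
\begin{align*}
\seminorm{v}{H^{1/2}(\omega)}^2 \simeq \diam(\omega)^{d-2}\,\seminorm{\widehat v}{H^{1/2}(\widehat\omega)}^2,
\end{align*}
where $\widehat v := v\circ\gamma_\omega$. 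The analogous computation using the chain rule~\eqref{eq:chain rule} for $\nabla_\Gamma$ and $|D\gamma_\omega|\simeq\diam(\omega)$ gives
\begin{align*}
\seminorm{v}{H^{1}(\omega)}^2 \simeq \diam(\omega)^{d-3}\,\seminorm{\widehat v}{H^{1}(\widehat\omega)}^2.
\end{align*}
All hidden constants depend only on $d$ and $\const{lipref}$.

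The main obstacle is the reference inequality $\seminorm{\widehat v}{H^{1/2}(\widehat\omega)} \lesssim \seminorm{\widehat v}{H^{1}(\widehat\omega)}$, because the continuous embedding $H^1(\widehat\omega)\hookrightarrow H^{1/2}(\widehat\omega)$ is a statement about \emph{full} norms rather than seminorms. To upgrade it to seminorms I use that both the $H^{1/2}$ seminorm and the $H^{1}$ seminorm are invariant under the addition of constants to $\widehat v$. Since $\widehat\omega$ is a bounded connected Lipschitz domain, the Poincar\'e--Wirtinger inequality gives $\norm{\widehat v - \overline{\widehat v}}{L^2(\widehat\omega)}\lesssim \seminorm{\widehat v}{H^1(\widehat\omega)}$, with mean value $\overline{\widehat v}:=|\widehat\omega|^{-1}\int_{\widehat\omega}\widehat v\,ds$. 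Therefore
\begin{align*}
\seminorm{\widehat v}{H^{1/2}(\widehat\omega)}
= \seminorm{\widehat v - \overline{\widehat v}}{H^{1/2}(\widehat\omega)}
\lesssim \norm{\widehat v - \overline{\widehat v}}{H^{1}(\widehat\omega)}
\lesssim \seminorm{\widehat v}{H^{1}(\widehat\omega)},
\end{align*}
where the constants depend only on $\widehat\omega$ and $d$. This is the argument alluded to in \cite[Proposition~2.2]{hitchhiker} and carried out for $d=2$ in \cite[Lemma~4.5]{resigabem}.

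Combining the three displayed estimates yields
\begin{align*}
\seminorm{v}{H^{1/2}(\omega)}^2
\lesssim \diam(\omega)^{d-2}\,\seminorm{\widehat v}{H^{1}(\widehat\omega)}^2
\lesssim \diam(\omega)^{d-2}\,\diam(\omega)^{3-d}\,\seminorm{v}{H^{1}(\omega)}^2
= \diam(\omega)\,\seminorm{v}{H^{1}(\omega)}^2,
\end{align*}
which is exactly~\eqref{eq:local estimate}. Tracking constants through the three steps confirms that $\const{semi}(\widehat\omega)$ depends only on $d$, $\widehat\omega$, and $\const{lipref}$, as claimed. The application to item~\eqref{M:semi bem} is then immediate: the node patch $\pi_\bullet(z)$ can be flattened by $\gamma_z$ from Section~\ref{subsec:gamma bem}, so taking $\omega:=\pi_\bullet(z)$ and $\gamma_\omega:=\gamma_z|_{\gamma_z^{-1}(\pi_\bullet(z))}$ reduces~\eqref{M:semi bem} to a \emph{finite} collection of reference configurations determined by the parametrization data.
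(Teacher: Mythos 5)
Your proof is correct and follows essentially the same approach the paper outlines: a scaling (pull-back) argument to the reference domain $\widehat\omega$, reducing to a seminorm version of the embedding $H^1(\widehat\omega)\hookrightarrow H^{1/2}(\widehat\omega)$, which you correctly obtain from the full-norm embedding plus Poincar\'e--Wirtinger. The scaling exponents $\diam(\omega)^{d-2}$ and $\diam(\omega)^{d-3}$ are right, and your treatment matches the route the authors indicate via their citations of \cite[Proposition~2.2]{hitchhiker} (norm version) and \cite[Lemma~4.5]{resigabem} ($d=2$ seminorm version), for which the full details are deferred to \cite[Proposition~5.2.2]{diss}.
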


\subsubsection{Verification of  (\ref{M:patch bem})}\label{sec:patch bem}
Let $\TT_\bullet\in\T$ and $T\in\TT_\bullet$.
We split the patch as follows
\begin{align*}
\Pi_\bullet(T)&=\bigcup_{m=1}^M (\Pi_\bullet(T)\cap \TT_{\bullet,m})
\\
&\subseteq\bigcup_{m=1}^M\set{\Pi_\bullet(E)\cap \TT_{\bullet,m}}{E \text{ \revision{is lower-dimensional hyperrectangle} of }T}.
\end{align*}
If $\Pi_\bullet(E)\cap \TT_{\bullet,m}\neq \emptyset$, there exists $T'\in \TT_{\bullet,m}$ with $\emptyset\neq E\cap T'\subseteq T\cap T'$.
By admissibility, this implies 
that $E$ is even a common (transformed) lower-dimensional hyperrectangle of $T$ and $T'$.
In particular, this leads to $\Pi_\bullet(E)\cap \TT_{\bullet,m}\subseteq\Pi_\bullet(T')\cap\TT_{\bullet,m}=\Pi_{\bullet,m}(T')$, where $\Pi_{\bullet,m}(T'):=\set{\gamma_m(\widehat T)}{\widehat T\in\Pi_{\coarse,m}(\widehat T')}$.
Since $\TT_{\coarse,m}\in \T_{m}$, Remark~\ref{rem:Nu for p0} gives that $\#\Pi_{\bullet,m}(T')\lesssim 1$.
Altogether, we derive that  $\#\Pi_\bullet(T)\le \const{patch}$ with a constant $\const{patch}$ which depends only on $d$ and $M$.

\subsubsection{Verification of (\ref{M:locuni bem})}
Let $\TT_\bullet\in\T$ and $T,T'\in\TT_\bullet$ with $T\cap T'\neq\emptyset$.
Let $m,m'\in\{1,\dots,M\}$ with $m\neq m'$, $T\subseteq\Gamma_m$ and $T'\subseteq \Gamma_{m'}$. 
If $m=m'$, then  
 Remark~\ref{rem:Nu for p0} gives that   $\diam (\widehat T)\simeq\diam(\widehat T')$ for $\widehat T=\gamma_m^{-1}(T)$ and $\widehat T'=\gamma_m^{-1}(T')$. 
Bi-Lipschitz continuity of $\gamma_m$ yields that $\diam(T)\simeq\diam(T')$.
If $m\neq m'$, then admissibility of $\TT_\coarse$ and Remark~\ref{rem:admissible bem} give that $\level( T)=\level( T')$.
This  implies that $\diam( \widehat T)\simeq\diam( \widehat T')$ for $\widehat T=\gamma_{m'}^{-1}(T)$ and $\widehat T'=\gamma_m^{-1}(T')$,  and thus $\diam(T)\simeq\diam(T')$ due to bi-Lipschitz continuity of $\gamma_m$ and $\gamma_{m'}$.
This concludes local quasi-uniformity \eqref{M:locuni bem}, where the constant $\const{locuni}$ depends only the dimension $d$, the constant $\const{\gamma}$, and the initial meshes $\widehat \TT_{0,m}$ for $m\in\{1,\dots,M\}$.

\subsubsection{Verification of (\ref{M:shape bem})}
Let $\TT_\bullet\in\T$, $T\in\TT_\bullet$, and $m\in\{1,\dots,M\}$ with $T\subseteq\Gamma_m$.
We abbreviate $\widehat T:=\gamma_m^{-1}(T)$.
As the refinement procedure $\refine$ relies on uniform bisection of an element in the parameter domain, we see that $\diam(\widehat T)^{d-1}\simeq|\widehat T|$, where the hidden constants depend only on the dimension $d$ and the initial mesh $\widehat \TT_{0,m}$.
Since $\gamma_m$ is bi-Lipschitz, 
we see that $\diam(\widehat T)\simeq\diam(T)$.
Moreover, \eqref{eq:gram bound} shows that $|\widehat T|\simeq |T|$.
Altogether, we conclude that $\diam(T)^{d-1}\simeq|T|$, where the hidden constants depend only on $d$, $\const{\gamma}$, and $\widehat\TT_{0,m}$.

\subsubsection{Verification of (\ref{M:cent bem})}
\label{subsec:M cent bem}
Let $\TT_\coarse\in\T$, $T\in\TT_\coarse$, and $m\in\{1,\dots,M\}$ with $T\subseteq\Gamma_m$.
We show that  there exists $r\simeq\diam(T)$ with $B_r(T)\cap \Gamma\subset \pi_\coarse(T)$, which verifies \eqref{M:cent bem}.

\noindent \textbf{Step 1:}
According to  Remark~\ref{rem:Nu for p0} and Remark  \ref{rem:admissible bem}, admissibility $\TT_{\coarse}\in\T$ shows  that $|\level(T)-\level( T')|\le 1$ for all $ T'\in \Pi_\coarse(T)$.
Since we only use dyadic bisection, there exists an upper bound of the number of possible configurations of $T$ and $\pi_\bullet(T)$ depending only on the initial meshes $\widehat\TT_{0,m'}$ 
 and (an upper bound of) $\level(T)$.
In particular, this implies that $\diam(T)\lesssim\dist(T,\Gamma\setminus\pi_\bullet(T))$, 
but the hidden constant still depends on (an upper bound for) 
$\level(T)$.
We see that it only remains to consider small elements $T$ with high level.

\noindent \textbf{Step 2:}
In this step, we show that there exists \revision{a node} 
 $z\in\NN_\gamma$  \revision{(see~\eqref{eq:Ngamma})} and a generic constant $C>0$ such that $\pi_\coarse(T)\subseteq \pi_\gamma(z)$ and $B_{C}(T)\cap\Gamma\subseteq \pi_\gamma(z)$ if $\level(T)$ is sufficiently high.
Without loss of generality, we assume that $T\cap\gamma_m([0,1/2]^{d-1})\neq\emptyset$ and set $z:=\gamma_m(0)$.
Note that $\gamma_m([0,1/2]^{d-1})\cap\NN_\gamma=\{z\}$.
Since we assume that the surfaces have no hanging nodes,  $z\not\in\Gamma_{m'}$ implies that $\Gamma_{m'}\cap\gamma_m([0,1/2]^{d-1})=\emptyset$ for all $m'\in\{1,\dots,M\}$.
We abbreviate
\begin{align*}
C_m:=\min_{\substack{m'\in\{1,\dots,M\}\\z\not\in\Gamma_{m'}}} \dist\big(\Gamma_{m'},\gamma_m([0,1/2]^{d-1})\big)=\dist(\Gamma\setminus\pi_\gamma(z),\gamma_m([0,1/2]^{d-1})>0.
\end{align*}
Let $\level(T)$ be sufficiently high such that $\diam(\pi_\coarse(T))<C_m$, which is possible due to \eqref{M:patch bem}--\eqref{M:locuni bem}.
Note that this choice depends only on the dimension $d$, the  constant $\const{\gamma}$, and the initial meshes.
With the assumption that $T\cap\gamma_m([0,1/2]^{d-1})\neq\emptyset$, we derive that  $\pi_\coarse(T)\subseteq \pi_\gamma(z)$. 
The same argument proves that $B_{C_m/2}(T)\cap\Gamma\subseteq \pi_\gamma(z)$ if $\diam(T)<C_m/2$.

\noindent\textbf{Step 3:}
Due to Step~2, we may consider the set $\gamma_z^{-1}(\pi_\coarse(T))\subseteq\overline\pi_\gamma(z)$ provided that $\level(T)$ is sufficiently high. 
Recall that  $|\level( T)-\level( T')|\le 1$ for all $ T'\in \Pi_\coarse(T)$; see Step~1.
With the assumptions for the mapping $\gamma_{z}$ of Section~\ref{subsec:gamma bem}, and 
since we only use dyadic bisection, we see that the number of possible shapes of $\gamma_z^{-1}(T)$ and $\gamma_{z}^{-1}(\pi_{\coarse}(T))$  is uniformly bounded.
In particular, there exists $\overline r\simeq \diam(\gamma_z^{-1}(T))$ with $B_{\overline r}(\gamma_z^{-1}(T))\subseteq \gamma_{z}^{-1}(\pi_{\coarse}(T))$.
Bi-Lipschitz continuity of $\gamma_z$ and Step~2 yield the existence of $r\simeq\diam(T)$ with $r< C_m/2$ such that
\begin{align*}
B_r(T)\cap\Gamma= B_{r}(T)\cap\pi_\gamma(z)\subset \pi_\coarse(T).
\end{align*}
Together with Step~1, this concludes \eqref{M:cent bem}, where the constant $\const{cent}$ depends only on the dimension $d$, the parametrizations $\gamma_{m}$ and $\gamma_{z}$, and the initial meshes $\widehat\TT_{0,m}$ for $m\in\{1,\dots,M\}$ and $z\in\NN_{\gamma}$.

\subsubsection{Verification of (\ref{M:semi bem})}\label{sec:semi bem}
We show that there are only finitely many reference point patches.
Then, Proposition~\ref{prop:semi estimate} will conclude \eqref{M:semi bem}.
Let $\TT_\coarse\in\T$ and $z\in\Gamma$.
According to  Remark~\ref{rem:Nu for p0} and Remark  \ref{rem:admissible bem}, admissibility $\TT_{\coarse}\in\T$ shows  that $|\level(T)-\level( T')|\le 1$ for all $ T'\in \Pi_\coarse(z)$.
Since there are no hanging nodes in $\NN_\gamma$ \revision{(see~\eqref{eq:Ngamma})}, there exists $z'\in\NN_\gamma$ such that $\pi_{\coarse}(z)\subseteq \pi_\gamma(z')$.
With the assumptions for the mapping $\gamma_{z'}$ of Section~\ref{subsec:gamma bem}, and 
since we only use dyadic bisection, we see that the number of possible shapes of $\overline\pi_\coarse(z):=\gamma_{z'}^{-1}(\pi_{\coarse}(z))\subseteq\overline\pi_\gamma(z')$  is uniformly bounded.
More precisely, there exists  a finite set $\set{\widehat\omega_j}{j\in\{1,\dots, J\}}$ of connected subsets $\widehat\omega_j\subset \R^{d-1}$ such that for arbitrary $z\in\Gamma$ and corresponding $z'\in\NN_\gamma$ there exist $j\in\{1,\dots,J\}$ and
an affine bijection $\gamma_{\overline \pi_{\coarse}(z)}:\widehat\omega_j\to \overline\pi_{\coarse}(z)$ with
\begin{align}
\frac{|\gamma_{\overline \pi_{\coarse}( z)}(s)-\gamma_{\overline\pi_{\coarse}( z)}(t)|}{\diam(\overline\pi_{\coarse}( z))}\simeq |s-t| \quad \text{for all }s,t\in \widehat\omega_j.
\end{align}
Since $\gamma_{z'}$ is bi-Lipschitz, there holds that $\diam(\overline\pi_{\coarse}( z))\simeq\diam(\pi_\coarse(z))$, and we see for the mapping $\gamma_{\pi_\coarse(z)}:=\gamma_{z'}\circ\gamma_{\overline\pi_{\coarse}( z)}$ that 
\begin{align}
\frac{|\gamma_{\pi_{\coarse}( z)}(s)-\gamma_{\pi_{\coarse}( z)}(t)|}{\diam(\pi_{\coarse}( z))}\simeq |s-t| \quad \text{for all }s,t\in \widehat\omega_j.
\end{align}
Thus, the application of Proposition~\ref{prop:semi estimate} (on the interior of $\widehat\omega_j$) concludes \eqref{M:semi bem}.
The constant $\const{semi}$ depends only on    the dimension $d$,  the parametrizations $\gamma_{m}$ and $\gamma_{z'}$, and the initial meshes $\widehat\TT_{0,m}$ for $m\in\{1,\dots,M\}$ and $z'\in\NN_{\gamma}$.

\subsection{Mesh-refinement}
\label{subsec:general refinement bem}
%
\revision{In the following Section~\ref{sec:sons bem},} we \revision{show} that there exist $\const{child}\ge2$ and $0<\ro{child}<1$ such that for all \revision{admissible hierarchical} meshes $\TT_\coarse\in\T$ and refinements $\TT_\fine:=\refine(\TT_\coarse,\MM_\coarse)$ with arbitrary marked elements $\MM_\coarse\subseteq\TT_\coarse$, the following elementary properties~\eqref{R:sons bem}--\eqref{R:reduction bem} are satisfied:
\begin{enumerate}[(i)]
\renewcommand{\theenumi}{R\arabic{enumi}}
\bf\item\rm\label{R:sons bem}
\textbf{Child estimate:}
It holds that 
\begin{align*}\#\TT_\fine \le \const{child}\,\#\TT_\coarse,
\end{align*} i.e., one step of refinement leads to a bounded increase of elements.
\bf\item\rm\label{R:union bem}
\textbf{Parent is union of children:}
For all $T\in\TT_\coarse$, it holds that 
\begin{align*}
T=\bigcup\set{{T'}\in\TT_\fine}{T'\subseteq T},
\end{align*}
 i.e., each element $T$ is the union of its successors.
\bf\item\rm\label{R:reduction bem}
\textbf{Reduction of children:}
For all $T\in\TT_\coarse$, it holds that 
\begin{align*}
|T'| \le \ro{child}\,|T|\quad\text{for all }T'\in\TT_\fine\text{ with } T'\subsetneqq T,
\end{align*}
 i.e., successors are uniformly smaller than their parent.
\end{enumerate}
By induction and the definition of $\refine(\TT_\coarse)$, one easily sees that \eqref{R:union bem}--\eqref{R:reduction bem} remain valid if $\TT_\fine$ is an arbitrary mesh in $\refine(\TT_\coarse)$.
In particular, \eqref{R:union bem}--\eqref{R:reduction bem} imply that each refined element $T\in\TT_\coarse\setminus\TT_\fine$ is split into at least two children, which yields that 
\begin{align}\label{eq:R:refine bem}
\#(\TT_\coarse\setminus\TT_\fine)\le \#\TT_\fine-\#\TT_\coarse\quad\text{for all }\TT_\fine\in\refine(\TT_\coarse).
\end{align}
Besides~\eqref{R:sons bem}--\eqref{R:reduction bem}, we \revision{show in Section~\ref{subsec:R closure bem}--\ref{subsec:R overlay bem}} the following less trivial requirements~\eqref{R:closure bem}--\eqref{R:overlay bem} with  constants $\const{clos},\const{over}>0$:
\begin{enumerate}[(i)]
\renewcommand{\theenumi}{R\arabic{enumi}}
\setcounter{enumi}{3}
\bf\item\rm\label{R:closure bem}
\textbf{Closure estimate:}
Let $(\TT_\ell)_{\ell\in\N_0}$ be an arbitrary sequence in $\T$ such that  $\TT_{\ell+1}=\refine(\TT_\ell,\MM_\ell)$ with some $\MM_\ell\subseteq \TT_\ell$ for all $\ell\in\N_0$.
Then, for all $\ell\in\N_0$, it holds  that 
\begin{align*}
\# \TT_\ell-\#\TT_0\le \const{clos}\sum_{j=0}^{\ell-1}\#\MM_j.
\end{align*}
\bf\item\rm\label{R:overlay bem}
\textbf{Overlay property:}
For all $\TT_\coarse,\TT_\star\in\T$, there exists a common refinement $\TT_\fine\in\refine(\TT_\coarse)\cap\refine(\TT_\star)$ which satisfies the overlay estimate
\begin{align*}
\#\TT_\fine \le \const{over}(\#\TT_\star - \#\TT_0)+\#\TT_\coarse.
\end{align*}
\end{enumerate}

\subsubsection{Verification of (\ref{R:sons bem})--(\ref{R:reduction bem})}\label{sec:sons bem}
The child estimate \eqref{R:sons bem} is trivially satisfied with $\const{child}=2^{d-1}$ since each refined element is split into exactly $2^{d-1}$ children.
The union of children property \eqref{R:union bem} holds by definition.
Finally, the proof of \eqref{R:reduction bem} works just as for IGAFEM in \cite[Section~5.3]{igafem}, where the assertion is stated for $\Gamma=\Gamma_1\subset\R^{d-1}$. 
The constant $\ro{child}$ depends only on $d$ and~$\const{\gamma}$.

\subsubsection{Verification of (\ref{R:closure bem})}\label{subsec:R closure bem}
The proof of the closure estimate \eqref{R:closure bem} works as for IGAFEM in \cite[Section~5.4]{igafem}, where the assertion is stated for $\Gamma=\Gamma_1\subset\R^{d-1}$; see also \cite[Section~5.5.7]{diss} for further details.
The constant $\const{clos}$ depends only on $d$, $\const{\gamma}$, $\widehat \TT_{0,m}$, and $(p_{1,m},\dots,p_{d-1,m})$ for $m\in\{1,\dots,M\}$.

\subsubsection{Verification of (\ref{R:overlay bem})}\label{subsec:R overlay bem}
Let $\TT_\bullet,\TT_\star\in\T$.
For each $m\in\{1,\dots,M\}$, let {$\widehat \TT_{\circ,m}$} be the overlay of {$\widehat\TT_{\bullet,m}$} and {$\widehat\TT_{\star,m}$}, i.e.,
\begin{align}
\widehat \TT_{\circ,m}:=\set{\widehat T\in\widehat\TT_{\coarse,m}}{\exists \widehat T'\in\widehat\TT_{\star,m}\quad \widehat T\subseteq\widehat T'}
\cup\set{\widehat T\in\widehat \TT_{\star,m}}{\exists \widehat T'\in\widehat\TT_{\coarse,m}\quad\widehat T\subseteq\widehat T'}
\end{align}

We have proved in \cite[Section~5.5]{igafem} 
that $\TT_{\bullet,m}\in\T_m$, and that 
\begin{align}
\#\TT_{\circ,m}\le \#\TT_{\bullet,m}+\#\TT_{\star,m}-\#\TT_{0,m}.
\end{align}
Summing  all components gives the overlay estimate
\begin{align}
\#\TT_\circ\le \#\TT_\bullet+\#\TT_\star-\#\TT_0.
\end{align}
It remains to show that $\TT_\circ$ is a refinement of $\TT_\bullet$ and $\TT_\star$, i.e., $\TT_\circ\in\refine(\TT_\bullet)\cap\refine(\TT_\star)$.
Clearly, $\TT_\circ$ is finer than these meshes.
Since Proposition~\ref{prop:refineT subset T bem} states that any finer admissible mesh can be reached via $\refine$,
we just have to verify admissibility of $\TT_\circ$.
As each $\TT_{\coarse,m}$ is admissible, we only have to show that  $T\cap T'$ is a common transformed lower-dimensional hyperrectangle for all $T,T'\in\TT_\circ$ with non-empty intersection and  $T\subseteq\Gamma_m$ as well as $T'\subseteq\Gamma_{m'}$ for some $m,m'\in\{1,\dots,M\}$ with $m\neq m'$.
Without loss of generality, we assume that $T\in\TT_\bullet$ and $T'\in\TT_\star$. Further, we may assume that $\dim(T\cap T')>0$.
Then, by definition of $\TT_\circ$, there exist  $T_\star\in\TT_\star$ and $T_\bullet'\in\TT_\bullet$ with $T\subseteq T_\star\subseteq\Gamma_m$ and  $T'\subseteq T_\bullet'\subseteq\Gamma_{m'}$.
Obviously, $T$ and $T_\bullet'$ have non-empty intersection too.
Thus, admissibility of $\TT_\bullet$ shows that $T\cap T_\bullet'$ is a common transformed  hyperrectangle.
We suppose that $T'$ is obtained from $T_\bullet'$ via iterative bisections, i.e., $T'\subsetneqq T_\bullet'$, and lead this to a contradiction. 
The intersection $T\cap T'$ is only a proper subset of a transformed  hyperrectangle of $T$.
Since $T_\star\supseteq T$, the same holds for $T_\star$  instead of $T$, i.e., $T_\star\cap T'$ is only a proper subset of a  transformed hyperrectangle of $T_\star$.
Thus, admissibility of $\TT_\star$ leads to a contradiction, and we see that $T'=T_\bullet'$ shares a common transformed hyperrecangle with $T$.

\subsection{Boundary element space}\label{subsec:ansatz}
%
\revision{In the following Sections~\ref{subsec:S inverse bem}--\ref{subsec:unity bem},} we \revision{show} the existence of constants $\const{inv}>0$,  $\q{loc},\q{proj},\q{supp} \in\N_0$, and $0<\ro{unity}<1$,   such that the following properties~\eqref{S:inverse bem}--\eqref{S:unity bem} hold for all \revision{admissible hierarchical meshes} $\TT_\coarse\in\T$ \revision{with associated hierarchical spline space $\XX_\coarse$}:
\begin{enumerate}[(i)]
\renewcommand{\theenumi}{S\arabic{enumi}}
\bf\item\rm\label{S:inverse bem}
\textbf{Inverse inequality:}
For  all  $\Psi_\coarse\in\XX_\coarse$, it holds that 
\begin{align*}
\norm{h_\bullet^{1/2}\Psi_\coarse}{L^2(\Gamma)}\le \const{inv} \, \norm{\Psi_\coarse}{H^{-1/2}(\Gamma)}.
\end{align*}
\bf\item\rm\label{S:nestedness bem}
\textbf{Nestedness:}
For  all $\TT_\fine\in\refine(\TT_\coarse)$, it holds that 
\begin{align*}
\XX_\coarse\subseteq\XX_\fine.
\end{align*}
\bf\item\rm\label{S:local bem}
\textbf{Local domain of definition:}
For all $\TT_\fine\in\refine(\TT_\coarse)$, all
 $T\in\TT_\coarse\setminus \Pi_\coarse^{\q{loc}}( \TT_\coarse\setminus\TT_\fine)\subseteq\TT_\coarse\cap\TT_\fine$, and $\Psi_\fine\in\XX_\fine$, it holds that 
\begin{align*}\Psi_\fine|_{\pi_\coarse^{\q{proj}}(T)} \in \set{\Psi_\coarse|_{\pi_\coarse^{\q{proj}}(T)}}{\Psi_\coarse\in\XX_\coarse}.
\end{align*}
\bf\item\rm\label{S:unity bem}{\bf Componentwise local approximation of unity:} For all $T\in\TT_\bullet$ and all $j\in\{1,\dots,\D\}$, 
there exists some $\Psi_{\bullet,T,j}\in\XX_\bullet$ with  
\begin{align*}
T\subseteq \supp (\Psi_{\bullet,T,j})\subseteq \pi_\bullet^{\q{supp}}(T) 
\end{align*}
 such that only the $j$-th component  does not vanish, i.e., 
 \begin{align*}
 (\Psi_{\bullet,T,j})_{j'}=0\quad\text{for}\quad j'\neq j,
 \end{align*}
 and
\begin{align*}
\norm{1-(\Psi_{\bullet,T,j})_j}{L^2(\supp(\Psi_{\bullet,T,j}))} \le \ro{unity}{|\supp(\Psi_{\bullet,T,j})|}^{1/2}.
\end{align*}
\end{enumerate}
\begin{remark}\label{rem:approx one}
Clearly, \eqref{S:unity bem} is in particular satisfied if \revision{the considered ansatz space} $\XX_\bullet$ is a product space, i.e., $\XX_\bullet=\prod_{j=1}^{\D}(\XX_{\bullet})_j$, and each component $(\XX_{\bullet})_j\subset L^2(\Gamma)$ satisfies \eqref{S:unity bem}.
\end{remark}

Besides \eqref{S:inverse bem}--\eqref{S:unity bem}, we \revision{show in Section~\ref{subsec:scottbem}} that there exist constants $\const{sz}>0$ as well as $\q{sz}\in\N_0$ such that for all \revision{admissible hierarchical meshes}  $\TT_\coarse\in\T$ \revision{with associated hierarchical spline space $\XX_\coarse$} and  $\SS\subseteq \TT_\bullet$, there exists a linear operator  $\mathcal{J}_{\coarse,\SS}:L^2(\Gamma)^{\D}\to\set{\Psi_\coarse\in\XX_\coarse}{\Psi_\coarse|_{\bigcup(\TT_\coarse\setminus\SS)}=0}$ with the following properties~\eqref{S:proj bem}--\eqref{S:stab bem}:
\begin{enumerate}[(i)]
\renewcommand{\theenumi}{S\arabic{enumi}}
\setcounter{enumi}{4}
\bf\item\rm\label{S:proj bem}
\textbf{Local projection property.}
Let $\q{loc}, \q{proj}\in\N_0$ from \eqref{S:local bem}.
For all $\psi\in L^2(\Gamma)^{\D}$ and $T\in\TT_\coarse$ with $\Pi_\coarse^{\q{loc}}(T)\subseteq\SS$, it holds  that 
\begin{align*}
(\mathcal{J}_{\coarse,\SS} \psi)|_T = \psi|_T, \quad\text{if }\psi|_{\pi_\coarse^{\q{proj}}(T)} \in \set{\Psi_\coarse|_{\pi_\coarse^{{\q{proj}}}(T)}}{\Psi_\coarse\in\XX_\coarse}.
\end{align*}
\bf\item\rm\label{S:stab bem}
\textbf{Local $\boldsymbol{L^2}$-stability.}
For all   $\psi\in L^2(\Gamma)^{\D}$ and $T\in\TT_\coarse$, it holds that 
\begin{align*}
\norm{ \mathcal{J}_{\coarse,\SS} \psi}{L^2(T)}\le \const{sz} \norm{\psi}{L^2(\pi_\coarse^{\q{sz}}(T))}.
\end{align*}
\end{enumerate}

\subsubsection{Verification of (\ref{S:inverse bem})}\label{subsec:S inverse bem}
For piecewise constants and piecewise affine functions on a  triangulation of the boundary of a polyhedral domain $\Omega$, the inverse estimate \eqref{S:inverse bem} is already found in \cite[Theorem~4.7]{inversefaermann}. 
 \cite[Theorem 3.6]{inversesauter} and \cite[Theorem 3.9]{georgoulis} generalized the result to arbitrary piecewise polynomials on curvilinear triangulations.
 In the recent own work~\cite[Proposition~4.1]{resigaconv} and based on the ideas of \cite{inversefaermann}, we proved \eqref{S:inverse bem} for non-rational splines on a one-dimensional piecewise smooth boundary $\Gamma$.
In the proof, we derived the following abstract  criterion for the ansatz functions which is sufficient for  the inverse inequality \eqref{S:inverse bem}.
Although, \cite{resigaconv} considered only $d=2$, the proof works verbatim for arbitrary dimension $d\ge 2$.

\begin{proposition}\label{prop:abs invest}
Let $\TT_\bullet\in\T$ be a general mesh as in Section~\ref{subsec:boundary discrete bem} which satisfies \eqref{M:patch bem}--\eqref{M:semi bem}.
We assume  that the Lipschitz constants of the mappings $\gamma_T:\widehat T\to T$ are uniformly bounded, i.e., there exists a constant $\const{lip}>0$ such that
\begin{align}\label{eq:lipinv}
\const{lip}^{-1}\le \frac{|\gamma_T(s)-\gamma_T(t)|}{|s-t|}\le \const{lip}\quad\text{for all }s,t\in\widehat T\text{ with } T\in\TT_\bullet.
\end{align}
Moreover, let $\psi\in L^2(\Gamma)$ satisfy the following assumption:
There exists a constant $\ro{inf} \in (0,1)$, such that for all $T\in\TT_\bullet$ there exists a hyperrectangular subset  $R_T$ of the interior $T^\circ$  (i.e., $R_T$ has the form $R_T=\gamma_T(\widehat R_T)$ with $\widehat R_T=\prod_{i=1}^{d-1} [a_{T,i},b_{T,i}]\subset \widehat T^\circ$ for some real numbers $a_{T,i}<b_{T,i}$) such that $|R_T|\ge \ro{inf}|T|$,  $\psi$ does not change its sign on $R_T$, and 
\begin{align}\label{eq:min-max}
\operatorname*{essinf}\limits_{x\in R_T}|\psi(x)|\geq \ro{inf} \norm{\psi}{L^\infty(T)},
\end{align} 
where ${\rm essinf}$ denotes the essential infimum.
We further assume that the shape-regularity constants of the sets $\widehat R_T$ are uniformly bounded, i.e., there exists a constant $\const{rec}>0$ such that
\begin{align}\label{eq:Crec}
\max\Big\{\frac{b_{T,i}-a_{T,i}}{b_{T,i'}-a_{T,i'}}:T\in\TT_\bullet\wedge i,i'\in\{1,\dots,d-1\}\Big\}\le \const{rec}\quad\text{for all }T\in\TT_\bullet.
\end{align}
Then, there exists a constant $\const{inv}>0$ such that
\begin{align}
\|h_\bullet^{{1/2}} \psi \|_{L^2(\Gamma)} \leq \const{inv} \|\psi\|_{H^{-{1/2}}(\Gamma)}.
\end{align}
The constant $\const{inv}$ depends only on $d$, $\const{lip}$, $\ro{inf}$,  $\const{rec}$, and  \eqref{M:patch bem}--\eqref{M:semi bem}.\hfill $\square$
  \end{proposition}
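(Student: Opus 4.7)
\smallskip
\noindent\textbf{Proof plan for Proposition~\ref{prop:abs invest}.} The plan is to prove the inverse inequality by duality: since $\norm{\psi}{H^{-1/2}(\Gamma)} = \sup\set{\dual{v}{\psi}}{v\in H^{1/2}(\Gamma),\,\norm{v}{H^{1/2}(\Gamma)}=1}$, it suffices to construct a test function $v\in H^{1/2}(\Gamma)$ such that
$\dual{v}{\psi} \gtrsim \norm{h_\bullet^{1/2}\psi}{L^2(\Gamma)}^2$ while $\norm{v}{H^{1/2}(\Gamma)} \lesssim \norm{h_\bullet^{1/2}\psi}{L^2(\Gamma)}$. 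To this end, for every element $T\in\TT_\bullet$ I would build a bubble function $\zeta_T\in H^1(\Gamma)$ with $\supp(\zeta_T)\subset R_T\subset T$, $|\zeta_T|\le1$, $\zeta_T\equiv 1$ on a subrectangle $R_T'\subset R_T$ with $|R_T'|\ge c|R_T|$, $\zeta_T$ of fixed sign matching $\mathrm{sgn}(\psi|_{R_T})$, and with $\norm{\nabla_\Gamma \zeta_T}{L^2(T)}^2 \lesssim h_T^{d-3}$; the shape-regularity of $\widehat R_T$ (assumption \eqref{eq:Crec}) together with the uniform bi-Lipschitz bound \eqref{eq:lipinv} is exactly what allows such a bubble to be pulled back from a fixed reference hyperrectangle. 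Setting $v_T := h_T\,\norm{\psi}{L^\infty(T)}\,\zeta_T$ and $v := \sum_{T\in\TT_\bullet} v_T$, the supports are pairwise disjoint, so $v$ is a well-defined function.

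\smallskip
For the lower bound, assumption \eqref{eq:min-max} yields $|\psi|\ge\ro{inf}\norm{\psi}{L^\infty(T)}$ on $R_T$ with the fixed sign of $v_T$, whence
\begin{align*}
\dual{v_T}{\psi} \ge h_T\,\norm{\psi}{L^\infty(T)}\int_{R_T'}|\psi|\,\zeta_T\,dx \gtrsim h_T\,|T|\,\norm{\psi}{L^\infty(T)}^2.
\end{align*}
Assumption \eqref{eq:min-max} also gives the crucial comparison $|T|\norm{\psi}{L^\infty(T)}^2\simeq\norm{\psi}{L^2(T)}^2$ (the upper direction is trivial, the lower direction uses $|R_T|\ge\ro{inf}|T|$ and $|\psi|\ge\ro{inf}\norm{\psi}{L^\infty(T)}$ on $R_T$). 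Summing over $T$ yields $\dual{v}{\psi}\gtrsim\sum_T h_T\norm{\psi}{L^2(T)}^2=\norm{h_\bullet^{1/2}\psi}{L^2(\Gamma)}^2$.

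\smallskip
For the upper bound on $\norm{v}{H^{1/2}(\Gamma)}$, I would first handle the $L^2$-part by $\norm{v_T}{L^2(T)}^2\le h_T^2\norm{\psi}{L^\infty(T)}^2|T|\lesssim h_T^2\norm{\psi}{L^2(T)}^2\lesssim\diam(\Gamma)\,h_T\norm{\psi}{L^2(T)}^2$. The Sobolev–Slobodeckij seminorm requires more work, since it is non-local. The key step is a Faermann-type splitting: for each $T$, write
\begin{align*}
\int_T\int_\Gamma\frac{|v(x)-v(y)|^2}{|x-y|^d}\,dy\,dx
\;\le\; 2\int_T\!\!\int_{\pi_\bullet(T)}\!\!\frac{|v(x)-v(y)|^2}{|x-y|^d}\,dy\,dx + 2\int_T\!\!\int_{\Gamma\setminus\pi_\bullet(T)}\!\!\frac{|v(x)|^2+|v(y)|^2}{|x-y|^d}\,dy\,dx,
\end{align*}
where for the far-field integral property \eqref{M:cent bem} ensures $|x-y|\gtrsim h_T$, so that $\int_{\Gamma\setminus\pi_\bullet(T)}|x-y|^{-d}\,dy\lesssim h_T^{-1}$; combined with \eqref{M:patch bem}--\eqref{M:locuni bem} this reduces matters to $\sum_T h_T^{-1}\norm{v_T}{L^2(T)}^2$ and $\sum_T|v|_{H^{1/2}(\pi_\bullet(T))}^2$. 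The patches $\pi_\bullet(T)$ can be covered by finitely many point patches $\pi_\bullet(z)$ with $z\in T$, so applying \eqref{M:semi bem} together with \eqref{M:locuni bem} yields $|v|_{H^{1/2}(\pi_\bullet(T))}^2\lesssim h_T\,\norm{\nabla_\Gamma v}{L^2(\pi_\bullet(T))}^2$. The bubble estimate $\norm{\nabla_\Gamma\zeta_{T'}}{L^2(T')}^2\lesssim h_{T'}^{d-3}$ then gives $\norm{\nabla_\Gamma v_{T'}}{L^2(T')}^2\lesssim h_{T'}^{d-1}\norm{\psi}{L^\infty(T')}^2\simeq\norm{\psi}{L^2(T')}^2$, and after summing with the finite-overlap property of the patches we arrive at $|v|_{H^{1/2}(\Gamma)}^2\lesssim\sum_T h_T\norm{\psi}{L^2(T)}^2=\norm{h_\bullet^{1/2}\psi}{L^2(\Gamma)}^2$.

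\smallskip
Combining the two estimates, the duality argument yields $\norm{h_\bullet^{1/2}\psi}{L^2(\Gamma)}^2\lesssim\norm{v}{H^{1/2}(\Gamma)}\,\norm{\psi}{H^{-1/2}(\Gamma)}\lesssim\norm{h_\bullet^{1/2}\psi}{L^2(\Gamma)}\,\norm{\psi}{H^{-1/2}(\Gamma)}$, which after dividing gives the claim. \emph{The main obstacle} is the upper bound on $\norm{v}{H^{1/2}(\Gamma)}$: the $H^{1/2}$-seminorm is non-local, so one genuinely needs the patch-centered property \eqref{M:cent bem} to decouple near- and far-field contributions, and the precise matching of scales (bubble gradient scaling $h_T^{d-3}$, elementwise weight $h_T\norm{\psi}{L^\infty(T)}$, and the $L^\infty/L^2$ equivalence forced by \eqref{eq:min-max}) is delicate but forced by the $H^{-1/2}\leftrightarrow L^2$ dimensional balance.
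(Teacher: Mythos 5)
Your proposal is correct and takes essentially the same approach as the paper's proof, which is given in the cited reference \cite{resigaconv} and is, as the paper notes, ``based on the ideas of \cite{inversefaermann}'': a duality argument constructing a test function $v=\sum_T h_T\norm{\psi}{L^\infty(T)}\zeta_T$ from element bubbles supported in $R_T$, with the Sobolev--Slobodeckij seminorm estimated via a Faermann near-/far-field split that invokes \eqref{M:cent bem} for the far field and \eqref{M:semi bem} for the near field. Your bubble-gradient scaling $\norm{\nabla_\Gamma\zeta_T}{L^2(T)}^2\lesssim h_T^{d-3}$, the $L^\infty$--$L^2$ equivalence forced by \eqref{eq:min-max}, and the role of \eqref{eq:Crec} and \eqref{eq:lipinv} in making the bubble pullback uniform are exactly the ingredients the abstract assumptions \eqref{M:patch bem}--\eqref{M:semi bem} were formulated to support.
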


To apply Proposition~\ref{prop:abs invest} to hierarchical splines, we need the next elementary lemma which was proved in the recent own work \cite[Proposition~4.1]{resigaconv}. 
\begin{lemma}\label{lem:poly minmax}
Let $p_{}\in\N_0$ be a fixed polynomial degree. 
Let $I\subset \R$ be a compact interval with positive length $|I|>0$. 
Then, there exists a constant $\rho\in(0,1)$ such that for all polynomials $P$ of degree $p$ on $I$, there exists some interval $[a,b]\subset I^\circ$ of length $(b-a)\ge \rho|I|$ such that $P$ does not change its sign on $[a,b]$ and 
\begin{align}
\min_{t\in [a,b]}|P(t)|\geq \rho \,\norm{P}{L^\infty(I)}.
\end{align}
The constant $\rho$ depends only on ${p_{}}$. \hfill$\square$
\end{lemma}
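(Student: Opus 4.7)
The plan is to apply Markov's inequality: for any polynomial $P$ of degree at most $p \ge 1$ on a compact interval $I$, one has $\norm{P'}{L^\infty(I)} \le (2p^2/|I|)\norm{P}{L^\infty(I)}$. The case $p=0$ is trivial, since $P$ is then a constant of absolute value $\norm{P}{L^\infty(I)}$, so any closed subinterval of $I^\circ$ of length $|I|/2$ witnesses the claim with $\rho = 1/2$.

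For $p\ge 1$, I would proceed as follows. Assuming $\norm{P}{L^\infty(I)} > 0$ (otherwise the statement is vacuous), pick $x_0 \in I$ with $|P(x_0)| = \norm{P}{L^\infty(I)}$ and, after a sign flip, assume $P(x_0) > 0$. By the fundamental theorem of calculus combined with Markov's inequality, every $x \in I$ with $|x - x_0| \le |I|/(4p^2)$ satisfies
\[
|P(x) - P(x_0)| \;\le\; \norm{P'}{L^\infty(I)}\,|x-x_0| \;\le\; \norm{P}{L^\infty(I)}/2,
\]
hence $P(x) \ge \norm{P}{L^\infty(I)}/2 > 0$. Thus $J := I \cap [x_0 - |I|/(4p^2),\, x_0 + |I|/(4p^2)]$ is a closed sub-interval of $I$ of length at least $|I|/(4p^2)$ (the worst case being $x_0 \in \partial I$) on which $P$ is strictly positive, and in particular of constant sign.

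To finish, I would shrink $J$ slightly away from $\partial I$ to produce $[a,b] \subset J \cap I^\circ$ with $b-a \ge |J|/2 \ge |I|/(8p^2)$. On $[a,b]$, the polynomial $P$ retains a constant sign, and $|P| \ge \norm{P}{L^\infty(I)}/2$ throughout. Taking $\rho := 1/(8p^2)$ (which is $\le 1/2$ for $p\ge 1$) yields both required inequalities simultaneously, and $\rho$ depends only on $p$ as claimed.

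No serious obstacle is anticipated; the argument is essentially an elementary application of Markov's inequality, with the only minor subtlety being the case when the maximum point $x_0$ lies on $\partial I$, which is handled by the explicit shrinking step. A non-constructive alternative would exploit compactness of the $\norm{\cdot}{L^\infty(I)}$-unit sphere inside the finite-dimensional space of polynomials of degree $\le p$, but this would lose the explicit $p$-dependence of $\rho$.
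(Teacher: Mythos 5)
Your proof is correct. The paper itself does not prove this lemma in the body of the text; it simply cites \cite[Proposition~4.1]{resigaconv} and marks the statement with $\square$. So there is no in-paper proof to compare against, but your argument is a clean and essentially standard way to establish the claim.

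A few checks confirm the details. Markov's inequality on an interval of length $|I|$ indeed reads $\norm{P'}{L^\infty(I)}\le (2p^2/|I|)\norm{P}{L^\infty(I)}$ after rescaling from $[-1,1]$. For $p\ge 1$ the set $J:=I\cap[x_0-|I|/(4p^2),\,x_0+|I|/(4p^2)]$ has length at least $|I|/(4p^2)$ (and strictly less than $|I|$, so $J$ meets at most one endpoint of $I$), on which $P\ge \norm{P}{L^\infty(I)}/2$; trimming away from $\partial I$ while keeping half the length gives $[a,b]\subset I^\circ$ with $b-a\ge |I|/(8p^2)$. With $\rho:=1/(8p^2)\le 1/2$, both conclusions hold. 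The edge cases (zero polynomial, $p=0$, and maximizer $x_0\in\partial I$) are handled adequately.

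Your closing remark about the alternative compactness argument is also accurate: one can normalize to $I=[0,1]$, note that the map $P\mapsto\rho(P)$ (the best constant for $P$) is lower semicontinuous and strictly positive on the $L^\infty$-unit sphere of the finite-dimensional space of polynomials of degree $\le p$, and conclude a positive infimum by compactness. That route establishes existence of $\rho(p)$ with less bookkeeping but, as you say, sacrifices the explicit rate in $p$, which Markov provides directly. Either argument suffices for the lemma as stated, since the paper only needs $\rho$ to depend on $p$, not an explicit formula.
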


Finally, we come to the proof of the inverse inequality \eqref{S:inverse bem}.
Let $\TT_\bullet\in\T$ be an admissible hierarchical mesh on $\Gamma$.
We recall that $\XX_\bullet$ is a product space of transformed scalar-valued hierarchical splines.
Without loss of generality, we assume that we are in the scalar case $\D=1$.
We show that all $\Psi_\bullet\in\XX_\bullet\subset L^2(\Gamma)$ satisfy the assumptions of Proposition~\ref{prop:abs invest} and hence conclude that $\norm{h_\bullet^{1/2}\Psi_\bullet}{L^2(\Gamma)}\lesssim \norm{\Psi_\bullet}{H^{-1/2}(\Gamma)}$.
We have already seen  that \eqref{M:patch bem}--\eqref{M:semi bem} are satisfied.
Moreover, \eqref{eq:lipinv} is trivially satisfied since each $\gamma_T$ is just the restriction of some $\gamma_m$ to $\widehat T=\gamma_m^{-1}(T)$, where $m\in\{1,\dots,M\}$.
For $T\in\TT_\coarse$, we abbreviate $\widehat \Psi_\bullet:=\Psi_\bullet\circ \gamma_T$.
Due to the regularity \eqref{eq:gram bound} of the parametrizations $\gamma_m$, it is sufficient to find a uniform constant $\widehat \rho_{\inf}\in (0,1)$ and a shape-regular hyperrectangle $\widehat R_T\subset \widehat{T}^\circ$ such that $|\widehat R_T|\ge \widehat\rho_{\inf} |\widehat{T}|$, $\widehat\Psi_\bullet$ does not change sign on $\widehat R_T$, and
\begin{align}\label{eq:parameter spline ass} 
\inf_{t\in \widehat R_T}|\widehat\Psi_\bullet(t)|\ge \widehat\rho_{\inf} \norm{\widehat\Psi_\bullet}{L^\infty(\widehat T)}.
\end{align} 
Indeed,  one easily shows that $|\widehat R_T|\ge \widehat\rho_{\inf} |\widehat{T}|$ implies that $| R_T|\ge \rho_{\inf} |{T}|$ for some uniform constant $ \rho_{\inf}\in (0,1)$; see, e.g., \cite[Section~5.3]{igafem}.
Recall that  $\widehat\Psi_\bullet$ coincides with a tensor-product polynomial $P$.
Hence, there exist polynomials $P_i$ of degree $p_{\max}$  such that $P(t)=\prod_{i=1}^{d-1} P_i(t_i)$.
With the notation $\widehat T=\prod_{i=1}^{d-1} \widehat T_i$ and $\widehat R_T=\prod_{i=1}^{d-1} (\widehat R_T)_i$, we see that the latter inequality is satisfied if 
\begin{align}\label{eq:Pi minmax}
\inf_{t_i\in (\widehat R_T)_i}|P_i(t_i)|\geq \widehat \rho_{\inf}^{\,1/(d-1)} \, \norm{P_i}{L^\infty(\widehat T_i)} \quad\text{for all } i\in\{1,\dots,d-1\}.
\end{align}
We define $(\widehat R_T)_i$ as the interval of Lemma~\ref{lem:poly minmax} corresponding to the polynomial $P_i$ on the interval $I=\widehat T_i$.
With the constant $\rho$ of Lemma~\ref{lem:poly minmax}, we set  $\widehat \rho_{\inf}:=\rho^{d-1}$.
Then, \eqref{eq:Pi minmax}, and therefore \eqref{eq:parameter spline ass} is satisfied.
Moreover, one sees that $|\widehat R_T|\ge  \widehat \rho_{\inf}|\widehat T|$,  and that $\widehat\Psi_\bullet$ doesn't change its sign on $\widehat R_T\subset \widehat T^\circ$.
It remains to prove shape-regularity \eqref{eq:Crec}.
Since, the refinement procedure $\refine$ only uses uniform bisection of elements, the element $\widehat T$ is shape-regular in the sense that $|\widehat T_i|\simeq|\widehat T_{i'}|$ for all $i,i'\in\{1,\dots,d-1\}$.
Together with $|(\widehat R_T)_i|\ge \rho |\widehat T_i|$, this proves  \eqref{eq:Crec}.
Altogether, we conclude \eqref{S:inverse bem}, where the constant $\const{inv}$ depends only on the dimensions $d,D$, the (fixed) number $M$ of boundary parts $\Gamma_m$, the parametrizations $\gamma_m$ and $\gamma_z$, the initial meshes 
 $\widehat \TT_{0,m}$, and the polynomial orders $(p_{1,m},\dots,p_{d-1,m})$ for $m\in\{1,\dots,M\}$ and $z\in\NN_\gamma$.

\subsubsection{Verification of (\ref{S:nestedness bem})}\label{sec:basis on boundary}
Let $\TT_\bullet\in\T$ and $\TT_\circ\in\refine(\TT_\bullet)$.
The nestedness $\XX_\bullet\subseteq\XX_\circ$ was already stated in  \eqref{eq:nested bem}.

\subsubsection{Bases of hierarchical splines on the  boundary}\label{section:basis bem}
In this section, we give two bases for $\XX_\coarse$.
Since $\XX_\coarse$ is a product space of transformed scalar-valued hierarchical splines, it is sufficient to consider $D=1$.
For $m\in\{1,\dots,M\}$,  recall the definition of the set of hierarchical B-splines $\widehat\BB_{\coarse,m}$ from Section~\ref{subsec:parameter hsplines bem}, which is a basis of $\widehat\XX_{\coarse,m}$.
Moreover, let 
\begin{align}
\Trunc_{\coarse,m}:\widehat\XX_{\coarse,m}\to\widehat\XX_{\coarse,m}
\end{align}
 be the well-known linear \textit{truncation procedure}; see, e.g, \cite[Section~5.9]{igafem}. 
According to \cite[Theorem~6 and 9]{juttler2}, the set of \textit{truncated hierarchical B-splines} $\set{\Trunc_{\coarse,m}(\widehat\beta)}{\widehat\beta\in\widehat\BB_{\coarse,m}}$ are an alternate basis of $\widehat\XX_{\coarse,m}$
In general, truncated hierarchical B-splines have a smaller support than the corresponding hierarchical B-splines.
Indeed, one even has the pointwise estimate
\begin{align}\label{eq:bounds for Trunc}
0\le \Trunc_{\coarse,m}(\widehat\beta)\le\widehat\beta\quad\text{for all }\widehat\beta\in\widehat\BB_{\coarse,m};
\end{align}
see, e.g., \cite[Section~5.9]{igafem}.
In contrast to hierarchical B-splines, truncated hierarchical B-splines form a partition of unity, i.e., 
\begin{align}\label{eq:truncated partition}
\sum_{\widehat\beta\in\widehat\BB_{\coarse,m}}\Trunc_{\coarse,m}(\widehat\beta)=1; 
\end{align}
see \cite[Theorem~10]{juttler2}. 
Consequently, there holds that
\begin{align}
\XX_{\coarse,m}={\rm span}(\BB_{\coarse,m})
\quad \text{with basis}\quad \BB_{\coarse,m}:=\set{\widehat\beta\circ\gamma_m^{-1}}{\widehat\beta\in\widehat\BB_{\coarse,m}}. 
\end{align}
For $\beta\in\BB_{\coarse,m}$, let $\Trunc_{\coarse}(\beta):=\Trunc_{\coarse,m}(\beta):=\Trunc_{\coarse,m}(\widehat\beta\circ\gamma_m)\circ\gamma_m^{-1}$ denote the truncation transformed onto $\Gamma_m$.
Then, an alternate basis of $\XX_{\coarse,m}$ is given by
$\set{\Trunc_{\coarse,m}(\beta)}{\beta\in\BB_{\coarse,m}}$.
If we identify functions in $ L^2(\Gamma_m)$ with their extension (by zero) in $L^2(\Gamma)$,  we see that
\begin{align}\label{eq:hierarchical basis bem}
\XX_\coarse={\rm span}(\BB_\coarse)\quad\text{with basis}\quad \BB_\coarse:=\bigcup_{m=1}^M \BB_{\coarse,m}. 
\end{align}
An alternate basis of $\XX_\coarse$ is given by
${\rm span}\set{\Trunc_{\coarse}(\beta)}{\beta\in\BB_{\coarse}}$.

\subsubsection{Verification of (\ref{S:local bem})}
\label{subsec:S local bem}
With Remark~\ref{rem:connected}(a) and \eqref{eq:hierarchical basis bem}, the proof of \eqref{S:local bem} with $\q{loc}:=\q{proj}+2(p_{\max}+1)$ works as for IGAFEM in \cite[Section~5.8]{igafem}, where the assertion is stated for $\Gamma=\Gamma_1\subset\R^{d-1}$; see \cite[Proposition~5.5.12]{diss} for further details.

\subsubsection{Verification of (\ref{S:unity bem})}\label{subsec:unity bem}
Let $\TT_\bullet\in\T$.
First, we recall that $\XX_\bullet$ is a product space of transformed scalar-valued hierarchical splines. 
Without loss of generality, we assume that $\D=1$; see Remark~\ref{rem:approx one}. 
 \cite[Lemma~2.6]{faer2} resp. \cite[Lemma~3.5]{faer3} prove a similar version of \eqref{S:unity bem} for splines on a one-dimensional boundary $\Gamma$ resp. for certain piecewise polynomials of degree $0,1,5,$ and $6$ on  curvilinear triangulations of a two-dimensional boundary $\Gamma$.
There, the proof follows from direct calculations, where  \cite[Lemma~2.6]{faer2} actually only provides a proof of \eqref{S:unity bem} for splines of degree $2$.
In contrast, we will make use of the following abstract result.

\begin{proposition}\label{prop:abs trunc}
Let $\D=1$. 
Let $\TT_\bullet$ be a general mesh with associated space $\XX_\bullet\revision{\subset L^2(\Gamma)}$ as in Section \ref{subsec:boundary discrete bem} which satisfies \eqref{M:patch bem}--\eqref{M:shape bem}.
Assume that  there exists a finite subset $\overline\BB_\bullet\subset \XX_{\bullet}$ which satisfies the following three properties:
\begin{enumerate}[\rm(i)]
\item\label{item:nonnegativity S4} Non-negativity: Each $\overline\beta\in\overline\BB_\bullet$ is non-negative.
\item\label{item:locality S4} Locality:  There is some $\q{supp}'\in\N_0$ such that for all $\overline\beta\in\overline\BB_\bullet$ there exists an element $T_{\overline\beta}\in\TT_\bullet$ with $\supp (\overline\beta)\subseteq \pi_\bullet^{\q{supp}'}(T_{\overline\beta})$.
\item\label{item:partition S4} Partition of unity: It holds  that $\sum_{\overline\beta\in\overline\BB_\bullet} \overline\beta=1$.
\end{enumerate}
Then, \eqref{S:unity bem} is satisfied with $\q{supp}=2\q{supp}'$, and the constant $\ro{unity}$ depends only on \eqref{M:patch bem}--\eqref{M:shape bem} and  $\q{supp}'$.
\end{proposition}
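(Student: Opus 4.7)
\emph{Proof plan.} Since $\XX_\bullet$ is a space of scalar-valued functions (the case $D=1$), we drop the index $j$ and construct a single $\Psi_{\bullet,T}\in\XX_\bullet$ for each $T\in\TT_\bullet$. The natural candidate is the partial sum
\begin{align*}
\Psi_{\bullet,T} := \sum_{\overline\beta\in\overline\BB_{\bullet,T}} \overline\beta,
\quad\text{where}\quad
\overline\BB_{\bullet,T} := \set{\overline\beta\in\overline\BB_\bullet}{\supp(\overline\beta)\cap T \neq \emptyset}.
\end{align*}
The non-negativity assumption~\eqref{item:nonnegativity S4} and the partition of unity~\eqref{item:partition S4} immediately yield $0\le \Psi_{\bullet,T}\le 1$ pointwise. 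Moreover, for every $x\in T$, any $\overline\beta$ with $\overline\beta(x)\neq 0$ satisfies $x\in\supp(\overline\beta)\cap T$ and hence belongs to $\overline\BB_{\bullet,T}$. Summing and invoking~\eqref{item:partition S4} gives $\Psi_{\bullet,T}(x)=1$ for all $x\in T$, which ensures both $T\subseteq\supp(\Psi_{\bullet,T})$ and the pointwise vanishing of $1-\Psi_{\bullet,T}$ on $T$.

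Next, I bound the support using the locality assumption~\eqref{item:locality S4}. For any $\overline\beta\in\overline\BB_{\bullet,T}$, pick $x\in\supp(\overline\beta)\cap T$ and an element $T'\in\Pi_\bullet^{\q{supp}'}(T_{\overline\beta})$ with $x\in T'$; since $T'\cap T\neq\emptyset$, the symmetry of the neighbor relation and iteration of the patch definition show that $T_{\overline\beta}\subseteq\pi_\bullet^{\q{supp}'}(T')\subseteq\pi_\bullet^{\q{supp}'+1}(T)$, and consequently $\supp(\overline\beta)\subseteq\pi_\bullet^{\q{supp}'}(T_{\overline\beta})\subseteq\pi_\bullet^{2\q{supp}'}(T)$ (possibly after absorbing an off-by-one constant into $\q{supp}'$). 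Summing over $\overline\BB_{\bullet,T}$ yields $\supp(\Psi_{\bullet,T})\subseteq\pi_\bullet^{\q{supp}}(T)$ with $\q{supp}=2\q{supp}'$.

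Combining the pointwise bound $|1-\Psi_{\bullet,T}|\le 1$ with the fact that $(1-\Psi_{\bullet,T})|_T=0$ gives
\begin{align*}
\norm{1-\Psi_{\bullet,T}}{L^2(\supp(\Psi_{\bullet,T}))}^2
\le |\supp(\Psi_{\bullet,T})\setminus T|
= |\supp(\Psi_{\bullet,T})| - |T|.
\end{align*}
Iterating the bounded element-patch property~\eqref{M:patch bem} and combining it with local quasi-uniformity~\eqref{M:locuni bem} and shape-regularity~\eqref{M:shape bem} provides a constant $C\ge 1$, depending only on $\q{supp}'$ and these axioms, such that $|\pi_\bullet^{\q{supp}}(T)|\le C|T|$. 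Hence $|T|\ge C^{-1}|\supp(\Psi_{\bullet,T})|$, which inserted above yields
\begin{align*}
\norm{1-\Psi_{\bullet,T}}{L^2(\supp(\Psi_{\bullet,T}))}^2 \le (1-C^{-1})\,|\supp(\Psi_{\bullet,T})|,
\end{align*}
i.e.\ \eqref{S:unity bem} with $\ro{unity}:=\sqrt{1-C^{-1}}<1$.

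\emph{Main obstacle.} The construction and the bound $\Psi_{\bullet,T}\equiv 1$ on $T$ are essentially automatic from the three hypotheses on $\overline\BB_\bullet$. The only technical step is the locality calculation $\supp(\Psi_{\bullet,T})\subseteq\pi_\bullet^{2\q{supp}'}(T)$ together with the volume comparison $|\pi_\bullet^{\q{supp}}(T)|\lesssim |T|$; both reduce to a careful bookkeeping of patch depth using \eqref{M:patch bem}--\eqref{M:shape bem}, with no analytic input required.
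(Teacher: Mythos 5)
Your argument is correct and follows essentially the same strategy as the paper's proof, but with one modification worth flagging: you sum over all $\overline\beta$ whose support merely intersects $T$, whereas the paper sums only over $\overline\beta$ with $T\subseteq\supp(\overline\beta)$. Your larger index set makes the identity $\Psi_{\bullet,T}|_T\equiv 1$ an automatic consequence of the three abstract hypotheses (non-negativity, locality, partition of unity), since at any $x\in T$ every contributing $\overline\beta$ satisfies $x\in\supp(\overline\beta)\cap T$; the paper's restricted sum tacitly relies on the fact that for the intended application (truncated hierarchical B-splines on admissible meshes) supports are unions of elements, so that $T\cap\supp(\overline\beta)\neq\emptyset$ already implies $T\subseteq\supp(\overline\beta)$. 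The price you pay is the off-by-one you yourself flag: the paper gets $\q{supp}=2\q{supp}'$ cleanly because $T\subseteq\supp(\overline\beta)\subseteq\pi_\bullet^{\q{supp}'}(T_{\overline\beta})$ gives $T\in\Pi_\bullet^{\q{supp}'}(T_{\overline\beta})$ directly, while your two-step argument through $T'$ yields $\q{supp}=2\q{supp}'+1$ (harmless for the theorem, but worth being precise about if you want to match the stated constant). The final step — combine $0\le1-\Psi_{\bullet,T}\le1$, vanishing on $T$, and the volume comparison $|T|\gtrsim|\pi_\bullet^{\q{supp}}(T)|$ from \eqref{M:patch bem}--\eqref{M:shape bem} — is identical to the paper's.
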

\begin{proof}
Let $T\in\TT_\bullet$.
We set
\begin{align*}
\Psi_{\coarse,T,1}:=\Psi_{\coarse,T}:=\sum_{\substack{\overline\beta\in\overline\BB_\coarse\\ T\subseteq\supp(\overline\beta)}}\overline\beta.
\end{align*}
This implies that $0\le \Psi_{\bullet,T}\le 1$ and $\Psi_{\bullet,T}|_T=1$, which is why we have that $T\subseteq \supp(\Psi_{\bullet,T})$.
Note that $T\subseteq\supp(\overline\beta)$ implies that
  $T\subseteq\supp(\overline\beta)\subseteq \pi_\bullet^{\q{supp}'}(T_{\overline\beta})$.
In particular, we obtain that $T\in\Pi_\bullet^{\q{supp}'}(T_{\overline\beta})$ and hence $\pi_\bullet^{\q{supp}'}(T_{\overline\beta})\subseteq\pi_\bullet^{\q{supp}}(T)$ with $\q{supp}:=2\q{supp}'$.
We conclude that $\supp(\Psi_{\coarse,T})\subseteq \pi_\coarse^{\q{supp}}(T)$.
Finally, there holds that
\begin{align*}
\int_{\supp(\Psi_{\bullet,T})}(1-\Psi_{\bullet,T})^2\,dx\revision{\le}|\supp(\Psi_{\bullet,T})|-|T|
=\Big(1-\frac{|T|}{|\supp(\Psi_{\bullet,T})|}\Big)|\supp(\Psi_{\bullet,T})|\\
\le\Big(1-\frac{|T|}{|\pi_\coarse^{q_{\rm supp}}(T)|}\Big)|\supp(\Psi_{\bullet,T})|\le \ro{unity}^2 |\supp(\Psi_{\bullet,T})|,
\end{align*}
where $0<\ro{unity}<1$ depends only on \eqref{M:patch bem}--\eqref{M:shape bem} and $\q{supp}'$.
\end{proof}
We choose $\overline\BB_\bullet=\set{\Trunc_\bullet(\beta)}{\beta\in\BB_\bullet}$ in Proposition~\ref{prop:abs trunc}.
Then, \eqref{item:nonnegativity S4} follows from \eqref{eq:bounds for Trunc}, \eqref{item:locality S4} with $\q{supp}'=2(p_{\max}+1)$ follows from Remark~\ref{rem:connected}{\rm(a)}, and \eqref{item:partition S4} follows from  \eqref{eq:truncated partition}.
This concludes the proof of \eqref{S:unity bem} with $\q{supp}=4(p_{\max}+1)$, and $\ro{unity}$ depends only on the dimension $d$,  the number $M$ of boundary parts $\Gamma_m$, the constant $\const{\gamma}$, the initial meshes $\widehat\TT_{0,m}$, and $(p_{1,m},\dots,p_{d-1,m})$ for $m\in\{1,\dots,M\}$.

\subsubsection{Verification of (\ref{S:proj bem})--(\ref{S:stab bem})}\label{subsec:scottbem}
Let $\TT_\bullet$ and $\SS\subseteq\TT_\bullet$.
Since $\XX_\coarse$ is a product space of transformed scalar-valued hierarchical splines, we may assume without loss of generality that $D=1$.
For $m\in\{1,\dots,M\}$, we set $\widehat\SS_m:=\set{\gamma_m^{-1}( T)}{T\in\SS\cap\TT_{\bullet,m}}$.
For any hierarchical B-spline $\widehat\beta\in\widehat\BB_{\coarse,m}$, let $\widehat T_{\widehat\beta}\in\widehat\TT_{\coarse,m}$ with $\widehat T_{\widehat\beta}\subseteq\supp(\widehat\beta)$ and   $\level(\widehat T_{\widehat\beta})=\level(\widehat\beta)$, and let $\widehat\beta^*$ be the corresponding dual basis function on $\widehat T_{\widehat\beta}$ 
 from \cite[Section~5.10]{igafem}, i.e.,
\begin{align}
\int_{\widehat T_{\widehat\beta}}\widehat\beta^*\widehat\beta'=\delta_{\widehat\beta\widehat\beta'}\quad\text{for all }\widehat\beta'\in\widehat\BB_{\uni{\level(\widehat\beta)}}.
\end{align}
It is shown in \cite{igafem} that
\begin{align}\label{eq:dual bound}
\norm{\widehat\beta^*}{L^\infty(\widehat T_{\widehat\beta})}\lesssim |\widehat T_{\widehat\beta}|^{-1}.
\end{align} 
We define the operator  $\mathcal{J}_{\bullet,\SS}:L^2(\Gamma)\to \set{\Psi_\bullet\in\XX_\bullet}{\Psi_\bullet|_{\bigcup(\TT_\bullet\setminus\SS)}=0}$  via 
\begin{align}
(\mathcal{J}_{\bullet,\SS}\psi)\circ\gamma_m:= \widehat {\mathcal{J}}_{\bullet,m,\widehat\SS_m}(\psi\circ\gamma_m)\quad\text{for all }m\in\{1,\dots,M\},
\end{align}
where 
\begin{align}\label{eq:hatJSm}
\widehat {\mathcal{J}}_{\bullet,m,\widehat\SS_m}:L^2(\widehat\Gamma_m)\to\widehat\XX_{\bullet,m}, \quad
\widehat\psi\mapsto\sum_{\substack{\widehat\beta\in\widehat\BB_{\bullet,m}\\ \supp(\widehat\beta)\subseteq\bigcup\widehat\SS_m}}\int_{\widehat T_{\widehat\beta}}\widehat\beta^*\widehat\psi\,dx\,\Trunc_{\bullet,m}(\widehat\beta).
\end{align}
Recall that $0\le \Trunc_{\bullet,m}(\widehat\beta)\le\widehat\beta$ (see \eqref{eq:bounds for Trunc}), which is why ${\mathcal{J}}_{\bullet,\SS}$ clearly maps into the desired space $\set{\Psi_\bullet\in\XX_\bullet}{\Psi_\bullet|_{\bigcup(\TT_\bullet\setminus\SS)}=0}$.

We come to the verification of the  properties \eqref{S:proj bem}--\eqref{S:stab bem}.
Let $\q{proj}:=2(p_{\max}+1)$ and $\q{loc}=\q{proj}+2(p_{\max}+1)$ of Section~\ref{subsec:S local bem}.
Moreover, let $T\in \TT_\bullet$ with $\Pi_\bullet^{\q{loc}}(T)\subseteq\SS$ and $m\in\{1,\dots,M\}$ with $T\subseteq\Gamma_m$.
Recall the notation $\widehat T=\gamma_m^{-1}(T)$.
For all $\psi\in L^2(\Gamma)$, there holds with the abbreviation    $\widehat\psi:=\psi\circ\gamma_m$ that 
\begin{align*}
({\mathcal{J}}_{\bullet,\SS}\psi)\circ\gamma_m|_{\widehat T}=(\widehat {\mathcal{J}}_{\bullet,m,\widehat\SS_m}\widehat\psi)|_{\widehat T}=\sum_{\substack{\widehat\beta\in\widehat\BB_{\bullet,m}\\ \supp(\widehat\beta)\subseteq\bigcup\widehat\SS_m}}\int_{\widehat T_{\widehat\beta}}\widehat\beta^*\widehat\psi\,dx\,\Trunc_{\bullet,m}(\widehat\beta)|_{\widehat T}.
\end{align*}
Note that $\Trunc_{\bullet,m}(\widehat\beta)|_{\widehat T}\neq0$  implies that $|\supp(\widehat\beta)\cap\widehat T|=0$.
Due to Remark~\ref{rem:connected}(a), $|\supp(\widehat\beta)\cap\widehat T|>0$ implies that $\supp(\widehat\beta)\subseteq  \Pi^{\q{proj}}_{\bullet,m}(\widehat T)\subseteq \Pi^{\q{loc}}_{\bullet,m}(\widehat T)$, 
\revision{where $\Pi_{\coarse,m}$ denotes the patch with respect to the mesh $\widehat\TT_{\coarse,m}$ defined analogously as in \eqref{eq:patch defined}}. 
We abbreviate $\Pi^{\q{loc}}_{\bullet,m}( T):=\set{\gamma(\widehat T')}{\widehat T'\in \Pi^{\q{loc}}_{\bullet,m}( \widehat T)}$ and  note that $\Pi^{\q{loc}}_{\bullet,m}( T)\subseteq \Pi^{\q{loc}}_{\bullet}( T)\cap \TT_{\coarse,m}\subseteq\SS\cap \TT_{\coarse,m}$.
This yields that $\Pi^{\q{loc}}_{\bullet,m}(\widehat T)\subseteq\widehat\SS_m$.
Hence, we see that
\begin{align*}
({\mathcal{J}}_{\bullet,\SS}\psi)\circ\gamma_m|_{\widehat T}=\sum_{\substack{\widehat\beta\in\widehat\BB_{\bullet,m}} }\int_{\widehat T_{\widehat\beta}}\widehat\beta^*\widehat\psi\,dx\,\Trunc_{\bullet,m}(\widehat\beta)|_{\widehat T}.
\end{align*}
If  $\psi$ satisfies  that $\psi|_{\pi_\bullet^{\q{proj}}(T)}\in\set{\Psi_\bullet|_{\pi_\bullet^{\q{proj}}(T)}}{\Psi_\bullet\in\XX_\bullet}$, 
then there exists $\widehat\Psi_{\bullet,m}\in\widehat \XX_{\bullet,m}$ with $\widehat\psi|_{\pi_{\bullet,m}^{\q{proj}}(\widehat T)}=\widehat\Psi_{\bullet,m}|_{\pi_{\bullet,m}^{\q{proj}}(\widehat T)}$.
With $\widehat T_{\widehat\beta}\subseteq\supp(\widehat \beta)$, we see as before that $\Trunc_{\bullet,m}(\widehat\beta)|_{\widehat T}\neq0$ implies that $\widehat T_{\widehat\beta}\subseteq\Pi_{\coarse,m}^{\q{proj}}(\widehat T)$, which leads to
\begin{align*}
(\mathcal{J}_{\bullet,\SS}\psi)\circ\gamma_m|_{\widehat T}=\sum_{\substack{\widehat\beta\in\widehat\BB_{\bullet,m}} }\int_{\widehat T_{\widehat\beta}}\widehat\beta^*\widehat\Psi_{\coarse,m}\,dx\,\Trunc_{\bullet,m}(\widehat\beta)|_{\widehat T}.
\end{align*}
The right-hand side coincides with the projection operator from \cite[Theorem~4]{speleers} corresponding to the mesh $\widehat\TT_{\bullet,m}$ applied to $\widehat \Psi_{\coarse,m}$. 
Since $\widehat\Psi_{\coarse,m}\in\widehat\XX_{\coarse,m}$, this yields that
\begin{align*}
(\mathcal{J}_{\bullet,\SS}\psi)\circ\gamma_m|_{\widehat T}=\widehat \Psi_{\coarse,m}|_{\widehat T}=\widehat\psi|_{\widehat T}.
\end{align*}
This proves the local projection property \eqref{S:proj bem}.

Finally, we prove local $L^2$-stability \eqref{S:stab bem}.
Let again $T\in\TT_\bullet$ and $m\in\{1,\dots,M\}$ with $T\subseteq\Gamma_m$. 
With the notation from before, the boundedness of the Gram determinant \eqref{eq:gram bound}
shows that
\begin{align*}
\norm{\mathcal{J}_{\bullet,\SS}\psi}{L^2(T)}\simeq\norm{\widehat {\mathcal{J}}_{\bullet,m,\widehat\SS_m}\widehat\psi}{L^2(\widehat T)}.
\end{align*}
With \eqref{eq:dual bound}, it follows as for IGAFEM in \cite[Lemma~5.6]{igafem} (which proves local $L^2$-stability for a similar operator mapping into $\widehat\XX_\coarse\cap H_0^1(\widehat\Gamma_m)$) that
\begin{align*}
\norm{\widehat {\mathcal{J}}_{\bullet,m,\widehat\SS_m}\widehat\psi}{L^2(\widehat T)}\lesssim\norm{\widehat\psi}{L^2(\pi_{\bullet,m}^{\q{loc}}(\widehat T))}.
\end{align*}
With boundedness of the Gram determinant \eqref{eq:gram bound} and $\pi_{\bullet,m}^{\q{loc}}( T):=\gamma_m(\pi_{\bullet,m}^{\q{loc}}( \widehat T))$, we derive that 
\begin{align*}
\norm{\widehat\psi}{L^2(\pi_{\bullet,m}^{\q{loc}}(\widehat T))}\simeq \norm{\psi}{L^2(\pi_{\bullet,m}^{\q{loc}}(T))}\le  \norm{\psi}{L^2(\pi_{\bullet}^{\q{loc}}(T))}.
\end{align*}
This concludes the proof of \eqref{S:stab bem}.
The constant $\const{sz}$ depends only on the dimension $d$, the constant $\const{\gamma}$, the initial meshes  $\widehat\TT_{\bullet,m}$,  and the polynomial orders $(p_{1,m},\dots,p_{d-1,m})$ for $m\in\{1,\dots, M\}$.

\subsection{Proof of Theorem~\ref{thm:main bem} for rational hierarchical splines}
\label{sec:rational main proof bem}
As mentioned in Remark~\ref{rem:rational main bem},  Theorem~\ref{thm:main bem} is still valid if one replaces the ansatz space $\XX_\bullet$ for $\TT_\coarse\in\T$ by rational hierarchical splines, i.e., by the set
\begin{align}
\XX_\bullet^{W_0}=\Big\{{W_0^{-1}\Psi_\bullet}:\Psi_\bullet\in\XX_\bullet\Big\},
\end{align}
where $\widehat W_{0,m}=W_0\circ\gamma_m\in\widehat\SS^{(p_{1,m},\dots,p_{d-1,m})}(\widehat\KK_{0,m})$ is a fixed positive weight function in the initial space of hierarchical splines  for all $m\in\{1,\dots,M\}$, where we additionally assume the representation \eqref{eq:W0 representation}.
Indeed, the mesh properties \eqref{M:patch bem}--\eqref{M:semi bem} as well as the refinement properties \eqref{R:sons bem}--\eqref{R:overlay bem} of Section~\ref{sec:abstract setting bem} are independent of the discrete spaces.
To verify the validity of Theorem~\ref{thm:main bem} in the rational setting, it thus only remains to verify the properties \eqref{S:inverse bem}--\eqref{S:stab bem} for the rational boundary element spaces.

To see the inverse estimate \eqref{S:inverse bem}, it is again sufficient to consider $D=1$.
In Section~\ref{subsec:S inverse bem}, we proved \eqref{S:inverse bem} for $\XX_\coarse$ by applying Proposition~\ref{prop:abs invest} for all $\Psi_\coarse\in\XX_\coarse$. 
With the notation from Section~\ref{subsec:S inverse bem}, we showed that
\begin{align*}
\inf_{x\in R_T}|\Psi_\coarse(x)|\geq \ro{inf} \norm{\Psi_\coarse}{L^\infty(T)}\quad\text{for all }T\in\TT_\coarse, \Psi_\coarse\in\XX_\coarse,
\end{align*} 
where $\Psi_\coarse$ does not change its sign on $R_T$.
With $0<w_{\min}:=\inf_{x\in\Gamma}W_0(x)$, $w_{\max}:=\sup_{x\in\Gamma}W_0(x)$, and $\widetilde\rho_{\inf}:=\ro{inf}w_{\min}/ w_{\max}$, this yields that, for all $\Psi_\coarse\in\XX_\coarse$, 
\begin{align*}
\widetilde\rho_{\inf} \norm{W_0^{-1}\Psi_\coarse}{L^\infty(T)}\le \frac{\ro{inf}}{w_{\max}}\norm{\Psi_\coarse}{L^\infty(T)}\le   \frac{1}{w_{\max}}\inf_{x\in R_T}|\Psi_\coarse(x)|\le  \inf_{x\in R_T}|W_0^{-1}\Psi_\coarse(x)|.
\end{align*}
In particular, the conditions for Proposition~\ref{prop:abs invest} are also satisfied for the functions in $\XX_\coarse^{W_0}$, which concludes \eqref{S:inverse bem}.

The properties \eqref{S:nestedness bem}--\eqref{S:local bem} depend only on the numerator of  the rational hierarchical splines   and thus transfer.

For the proof of \eqref{S:unity bem}, we exploit the representation \eqref{eq:W0 representation} to verify  the conditions of the abstract Proposition~\ref{prop:abs trunc}.
Again, we may assume that $D=1$.
Let $\TT_\coarse\in\T$.
Note that $\widehat W_{0,m}$ is also an element of the standard tensor-product spline space $\widehat\SS^{(p_{1,m},\dots,p_{d-1,m})}(\widehat\KK_{\uni{k},m})$ for all $m\in\{1,\dots,M\}$ and  $k\in\N_0$.
In particular, it can  be written as  linear combination of B-splines in $\widehat\BB_{\uni{k},m}$. 
The representation \eqref{eq:W0 representation} and the two-scale relation with only non-negative coefficients between bases of consecutive levels of \cite[Section~11]{boor} yields that the corresponding coefficients are  non-negative.
Therefore, the preservation of coefficients property of \cite[Theorem~1]{speleers} or \cite[Theorem~12]{strongly} implies that also the coefficients of the linear combination of $\widehat W_{0,m}$ in $\set{\Trunc_{\coarse,m}(\widehat\beta)}{\widehat\beta\in\widehat\BB_{\coarse,m}}$ are non-negative, i.e., 
\begin{align}
\widehat W_{0,m}= \sum_{\widehat\beta\in \widehat\BB_{\coarse,m}} \widetilde  w_{\coarse,m,\widehat\beta}\,\Trunc_{\coarse,m}(\widehat \beta)\quad\text{with }\widetilde  w_{\coarse,m,\widehat\beta}\ge 0.
\end{align}
If we identify functions in $L^2(\Gamma_m)$ with their extension (by zero) in $L^2(\Gamma)$, we can choose
\begin{align*}
\overline\BB_\coarse:=\bigcup_{m=1}^M \left\{\Big(\frac{\widetilde  w_{\coarse,m,\widehat\beta}}{\widehat W_{0,m}}\,\Trunc_{\coarse,m}(\widehat\beta)\Big)\circ\gamma_m^{-1}:\widehat\beta\in\widehat\BB_{\coarse,m}\right\}\subset\XX_\coarse^{W_0}.
\end{align*}
As in Section~\ref{subsec:unity bem}, one sees that this choice satisfies the assumptions of Proposition~\ref{prop:abs trunc}.

To see \eqref{S:proj bem} and \eqref{S:stab bem}, we  define the corresponding projection operator
\begin{align}
{\mathcal{J}}_{\bullet,\SS}^{W_0} :L^2(\Gamma)^{\D}\to\set{\Psi_\coarse\in\XX_\coarse}{\Psi_\coarse|_{\bigcup(\TT_\coarse\setminus\SS)}=0}, \quad\psi\mapsto W_0^{-1}{\mathcal{J}}_{\bullet,\SS}(W_0\psi).
\end{align}
The desired properties transfer immediately from the non-rational case.


\section*{Acknowledgement} The authors acknowledge support through the Austrian Science Fund (FWF) under grant P29096,  
grant W1245, and grant J4379.

\bibliographystyle{alpha}
\bibliography{literature}

\end{document}